\newcommand{\ignore}[1]{}
\newcommand{\piFB}{\pi_{F,\BB}}
\newcommand{\xx}{\mathbf x}
\newcommand{\BB}{\mathcal{B}}
\newcommand{\CC}{\mathcal{C}}
\newcommand{\XX}{\mathcal{X}}
\newcommand{\ub}{\mathbf u}
\newcommand{\PR}{\oR[\xx]}
\newcommand{\PC}{\oC[\xx]}
\newcommand{\PK}{\oK[\xx]}
\newcommand{\QPK}{\PK/I}
\newcommand{\oR}{\mathbb R}
\newcommand{\oK}{\mathbb K}
\newcommand{\clK}{\overline{\mathbb K}}
\newcommand{\oN}{\mathbb N}
\newcommand{\oC}{\mathbb C}
\newcommand{\oL}{\mathbb L}
\newcommand{\dd}{\mathbf d}
\newcommand{\Kc}{\mathcal{K}}
\newcommand{\Lc}{\mathcal L}
\newcommand{\Mon}{\mathcal{M}}
\newcommand{\Fred}{F^{\mathrm{red}}}
\newcommand{\Gred}{G^{\mathrm{red}}}
\newcommand{\Span}[1]{\<{#1}\>} 
\newcommand{\SpanK}[1]{\Span{#1}}
\newcommand{\SpanD}[2]{\<#1\,|\,{ #2}\>}
\newcommand{\Ker}{\tmop{Ker}}
\newcommand{\Trace}{\tmop{Trace}}
\newcommand{\rank}{\tmop{rank}}
\newcommand{\AAA}{\mathcal{A}}
\newcommand{\HE}{H_{\Lambda}^E}
\newcommand{\KE}{\Ker H_{\Lambda}^{E}}
\newcommand{\KerE}[1]{\Kc_{E}(#1)}
\newcommand{\un}{\mathbf{1}}
\newcommand{\unb}{\underline{\mathbf{1}}}
\newcommand{\supp}{\mathrm{supp}}
\newcommand\note[1]{}
\newcommand\noteBM[1]{}
\newcommand\noteML[1]{}
\newcommand\comment[1]{} 
\def\<{\langle}
\def\>{\rangle}
\def\Mon{\mathcal{M}}
\newcommand{\support}{\textrm{support}}
\newcommand{\tmmathbf}[1]{\ensuremath{\boldsymbol{#1}}}
\newcommand{\tmop}[1]{\ensuremath{\operatorname{#1}}}
\newenvironment{itemizeminus}{\begin{itemize} }{\end{itemize}}
\newtheorem{definition}{Definition}[section]
\newtheorem{corollary}[definition]{Corollary}
\newtheorem{theorem}[definition]{Theorem}
\newtheorem{lemma}[definition]{Lemma}
\newtheorem{proposition}[definition]{Proposition}
\newtheorem{remark}[definition]{Remark}
\def\LL{\mathbb{L}}
\def\NN{\mathbb{N}}
\begin{document}

\title{Moment matrices, border bases and real radical computation}
\author{J.B. Lasserre\and M. Laurent\and B. Mourrain \and
  Ph. Rostalski \and Ph. Tr\'ebuchet}
\address{J.B. Lasserre, LAAS, Toulouse, France}
\address{Monique Laurent, CWI, Amsterdam, Netherland}
\address{Bernard Mourrain, GALAAD INRIA, Sophia Antipolis, France}
\address{Philipp Rolstalki, University of Berkeley, USA}
\address{Philippe Tr\'ebuchet, LIP6, University Paris VI, France}

\begin{abstract}
In this paper, we describe new methods to compute the radical
(resp. real radical) of an ideal, assuming it complex (resp. real)
variety is finte. The aim is to combine approaches for solving a system of 
polynomial equations with dual methods which involve moment matrices and
semi-definite programming.  While the border basis algorithms of \cite{Mourrain2005}
are efficient and numerically stable for computing complex roots, algorithms
based on moment matrices \cite{LLR07} allow the incorporation of additional
polynomials, e.g., to restrict the computation to real roots or to eliminate
multiple solutions. The proposed algorithm can be used to compute a border
basis of the input ideal and, as opposed to other approaches, it can also
compute the quotient structure of the (real) radical ideal directly,
i.e., without prior algebraic techniques such as Gr\"obner bases. It thus
combines the strength of existing algorithms and provides a unified treatment
for the computation of border bases for the ideal, the radical ideal and the
real radical ideal.
\end{abstract}
\maketitle

\section{Introduction}
Many problems in mathematics and science can be reduced to the task
of solving zero-dimensional systems of polynomials. Existing methods
for this task often compute all (real and complex) roots. However,
often only real solutions are significant and one needs to sieve out
all complex solutions afterwards in a separate step.

Typical approaches in this vein are the efficient homotopy
continuation methods in the spirit of \cite{Ve99}, \cite{SW05},
recursive intersection techniques using rational univariate
representation \cite{Giusti2001} in the spirit of 
Kronecker's work \cite{Kronecker1882}, Gr\"obner basis approaches
using eigenvector computations or rational univariate representation
\cite{CLO98}, \cite{Rou99}, \cite[chap. 4]{em-07-irsea}. In the latter
methods, emphasis is put on exact input and computation. Using a
different approach, Mourrain and Tr\'ebuchet \cite{Mourrain2005} have
proposed an efficient numerical algorithm that uses border bases and
the concept of {\it rewriting family}. In particular, in the course of
this algorithm, a distinguishing and remarkable feature is a careful
selection strategy for monomials serving as candidates for elements in
a basis of the quotient space $\oK[\xx]/I$ (if $I\subset\oK[\xx]$ is
the ideal generated by the polynomials defining the equations). As a
result, at each iteration of the procedure, the candidate basis for
the quotient space $\oK[\xx]/I$ contains only a small number of
monomials (those associated with a certain {\it rewriting
  family}). Another nice feature of this approach (and in contrast
with Gr\"obner base approaches) is its robustness with respect to
perturbation of coefficients in the original system.

On the other hand, Lasserre et al. \cite{LLR07} have proposed an
alternative numerical method, real algebraic in nature, to directly
compute all real zeros {\it without} computing any complex zero.
This approach uses well established semi-definite programming
techniques and numerical linear algebra. Remarkably, all information
needed is contained in the so-called quasi-Hankel {\it moment
matrix} with rows and columns indexed by the canonical monomial
basis of $\oK[\xx]_d$. Its entries depend on the polynomials
generating the ideal $I$ and the underlying geometry  when this
matrix is required to be positive semi-definite with maximum rank. A
drawback of this approach is the potentially large size of the
positive semi-definite moment matrices to handle in the course of the
algorithm. Indeed, when the total degree is increased from $d$ to
$d+1$, the new moment matrix to consider has its rows and columns
indexed by the canonical (monomial) basis of $\oK[\xx]_{d+1}$.

The goal of this paper is to combine a main feature of the border basis 
algorithm of \cite{Mourrain2005} (namely its careful selection of monomials,
considered as candidates in a basis of the quotient space
$\oK[\xx]/I$) with the semi-definite approach of \cite{LLR07} for
computing real zeros and an approach for computing the radical ideal
inspired by \cite{JFSMR08}.  

The main contribution of this paper is to describe a new algorithm
which incorporates in the border basis algorithm the positive
semi-definiteness constraint of the moment matrix, which are much
easier to handle than the relaxation method of \cite{LLR07}.  We show
the termination of the computation in the case where the real radical
is zero-dimensional (even in cases where the ideal is not
zero-dimensional). 
A variant of the approach is also proposed, which yields a new
algorithm to compute the (complex) radical for zero-dimensional
ideals.

In this new algorithm, the rows and columns involved in the
semi-definite programming problem are associated with the family of
monomials (candidates for being in a basis of the quotient space) and
its border, i.e., a subset of monomials much smaller than the
canonical (monomial) basis of $\PR_d$ considered in \cite{LLR07}.  As
a result, the (crucial) positive semi-definiteness constraint is much
easier to handle and solving problem instances of size much larger
than those in \cite{LLR07} can now be envisioned.
A preliminary implementation of this new algorithm
validate experimentally these improvements on few benchmarks problems.

The approach differs from previous techniques such as \cite{BeckerNeuhaus98}
which involve complex radical computation and factorisation or
reduction to univariate polynomials, in that the new polynomials
needed to describe the real radical are computed directly from the
input polynomials, using SDP techniques.

The paper is organized  as follows. Section \ref{sec:2} recalls the
ingredients and properties involved in the algebraic computation.
Section \ref{sec:3} describes duality tools and Hankel operators
involved in the computation of (real) radical of ideals.
In Section \ref{sec:4}, we analyse the properties of the truncated
Hankel operators. In section \ref{sec:5}, we describe the real radical
and radical algorithms and prove their correctness in section
\ref{sec:6}. Finally, Section \ref{sec:7} contains some illustrative examples
and experimentation results of a preliminary implementation.

\section{Polynomials, dual space and quotient algebra}\label{sec:2}
In this section, we set our notation and recall the eigenvalue techniques 
for solving polynomial equations and the border basis method. These results will be
used for showing the termination of the radical border basis algorithm. 
\subsection{Ideals and varieties}

Let $\PK$ be the set of the polynomials in the variables
$\xx=(x_1,\ldots,x_n)$, with coefficients in the field
$\oK$. Hereafter, we will choose\footnote{For notational simplicity, we will
consider only these two fields in this paper, but $\oR$ and $\oC$ can
be replaced respectively by any real closed field and any field
containing its algebraic closure)}
 $\oK=\oR$ or $\oC$.
 Let $\overline{\oK}$ denotes the algebraic closure of
 ${\oK}$.
For $\alpha \in \oN^n$, $\xx^{\alpha}= x_1^{\alpha_1} \cdots x_n^{\alpha_n}$ is the monomial with exponent $\alpha$
and degree $|\alpha|=\sum_i\alpha_i$.  The set of all monomials in $\xx$ is
denoted $\Mon = \Mon(\xx)$. We say that $\xx^{\alpha} \le \xx^{\beta}$ if
$\xx^{\alpha}$ divides $\xx^{\beta}$, i.e., if $\alpha\le \beta$ coordinate-wise.
For a polynomial $f=\sum_\alpha f_\alpha \xx^\alpha$, its support is 
$\supp(f):=\{\xx^\alpha\mid f_\alpha\ne 0\}$, the set of monomials occurring with a nonzero coefficient in $f$.

 For $t\in \NN$ and $S\subseteq \PK$, we introduce the following sets:
\begin{itemize}
 \item $S_{t}$ is the set of elements of $S$ of degree $\le t$,
 \item $S_{[t]}$ is the set of element of $S$ of degree exactly $t$,
\item $\Span{S} = \big\{ \sum_{f\in S} \lambda_{f}\, f\ |\ f\in S, \lambda_f\in \oK\big\}$ is the linear span of $S$,
\item $(S) = \big\{ \sum_{f\in S} p_f\, f \ | \ p_f \in \PK, f \in S \big\}$
is the ideal in $\oK[\xx]$ generated by $S$,
\item
$\SpanD{S}{t} = \big\{ \sum_{f\in S_t} p_f\, f\ | \ p_f \in
  \PK_{t-\deg(f)}\big\}$ is the vector space spanned by $\{\xx^\alpha  f\mid
  f\in S_t, |\alpha|\le t-\deg(f)\}$, 
\item 
$S^+:=S\cup x_1S \cup \ldots \cup x_n S$ is the prolongation of $S$ by one degree,
\item
$\partial S:= S^+\setminus S$ is the border of $S$,
\item
 $S^{[t]}:= S^{\stackrel{t\ \mathrm{times}}{+\cdots+}}$
is the result of applying $t$ times the prolongation operator `$^{+}$' on $S$, with 
$S^{[1]}=S^+$ and, by convention, $S^{[0]}=     S$.
\end{itemize}
Therefore,
$S_t=S\cap \oK[\xx]_t$, $S_{[t]}= S\cap \oK[\xx]_{[t]}$,
$S^{[t]}=\{x^\alpha f\mid f\in S, |\alpha|\le t\}$, 
$\SpanD{S}{t} \subseteq (S)\cap \oK[\xx]_t= (S)_t$, but the inclusion may be strict.
\ignore{
We use
\[
\Span{S} = \big\{ \sum_{f\in S} \lambda_{f}\, f\ |\ f\in S, \lambda_f\in \oK\big\}
\]
to denote the linear span of the polynomials in $S$ and 
\[
\Span{S}_{t} = \Span{S} \cap \PK_t 
\]
to denote the subset of polynomials of degree at most $t$. Furthermore we will need the notation
\[
(S) = \big\{ \sum_{f\in S} p_f\, f \ | \ p_f \in \PK, f \in S \big\}
\]
for the ideal generated by all polynomials in $S$ and
\begin{align*} 
\SpanD{S}{t} &= \big\{ \sum_{f\in S_t} p_f\, f\ | \ p_f \in \PK_{t-\deg(f)}\big\}\\
&=\big\{ \sum_{\alpha\in \NN^{n}, f\in S_t}\lambda_{\alpha,f}\,
\xx^{\alpha}\, f\ |\ \lambda_{\alpha,f}\in \oK \; \rm{ and } \; |\alpha|\leq t-\deg(f) \big\}
\end{align*} 
for the subset of the ideal by considering polynomial multipliers of limited
degree. This latter set is not to be confused with the set $(S)_t (S) \cap \PK_t$ of polynomials in the ideal $(S)$ with degree at most
$t$. We have $\SpanD{S}{t}\subseteq (S)_{t} $ but these two sets may or may not coincide.. 
}

If $\BB \subseteq \Mon$ contains $1$ then, for any monomial $m \in \Mon$, there exists an integer 
$k$ for which 
 $m\in \BB^{[k]}$. The {\em $\BB$-index} of $m$, denoted by $\delta_{\BB}(m)$,
 is defined as the smallest integer $k$ for which  $m\in \BB^{[k]}$.

A set of monomials $\BB$ is said to be {\em connected to $1$} if
$1\in \BB$ and for every monomial $m\neq 1$ in $\BB$, $m= x_{i_{0}} m'$ for
some $i_{0}\in [1,n]$ and $m'\in \BB$. 

Given a vector space $E \subseteq \PK$, its prolongation
$ E^+ : = E + x_1 E + \ldots + x_n E$ is again a vector space.

The vector space $E$ is said to be \textit{connected to 1} if 
$1\in E$ and any
non-constant polynomial $p \in E$ can be written as $p = p_0+ \sum_{i = 1}^n x_i p_i$
for some polynomials $p_0,p_i\in E$ with $\deg(p_0)\le \deg(p)$, $\deg(p_i)\le \deg(p)-1$ for $i\in [1,n]$.
Obviously,
$E$ is connected to 1 when $E=\Span{\CC}$ for some monomial set $\CC\subseteq \Mon$ which is connected to 1. 
Moreover, $E^+ = \Span{\CC^+}$ if $E=\Span{\CC}$.

\medskip
Given an ideal $I\subseteq \PK$ and a field $\LL \supseteq \oK$, we denote by
\begin{align*}
V_{\LL}(I)&:=\{x\in\LL^n\mid f(x)=0\ \forall f\in I\}
\end{align*}
its associated variety in $\LL^{n}$. By convention $V(I)=V_{\clK}(I)$. For a set
$V\subseteq \oK^n$, we define its vanishing ideal
\[I(V):=\{f\in \PK\mid f(v)=0\ \forall v\in V\}.\]
Furthermore, we denote by
\[\sqrt I:=\{f\in \PK \,\mid\, f^m\in I \ \text{ for some } m\in \oN\setminus \{0\}\}\]
the radical of $I$. 

For $\oK=\oR$,  
we have $V(I)=V_{\oC}(I)$, 
but one may also be interested in the subset of real solutions, namely the real variety
$V_\oR(I)=V(I)\cap \oR^n.$
The corresponding vanishing ideal is $I(V_\oR(I))$ and the \emph{real radical ideal} is
\[\sqrt[\oR]I:=\{p\in \PR\mid p^{2m} +\sum_j q_j^2\in I \ \text{ for some }q_j\in \PR, m\in \oN\setminus \{0\}\}.\]
Obviously,
\[I\subseteq \sqrt I\subseteq I(V_{\oC}(I)),\ \  I\subseteq \sqrt[\oR]I\subseteq I(V_\oR(I)).\]
An ideal $I$ is said to be {\em radical} (resp., {\em real radical})
if $I=\sqrt I$ (resp. $I= \sqrt[\oR]I$). Obviously, $I\subseteq
I(V(I))\subseteq I(V_\oR(I))$. Hence, if $I\subseteq \PR$ is real
radical, then $I$ is radical and moreover, $V(I)=V_\oR(I)\subseteq
\oR^n$ if $|V_\oR(I)|<\infty$.

The following two famous theorems relate vanishing and radical
ideals:
\begin{theorem}\ 
\begin{itemize}
\item[(i)] {\bf Hilbert's Nullstellensatz} (see, e.g., \cite[\S 4.1]{CLO97})
  $\sqrt I=I(V_{\oC}(I))$ for an ideal $I\subseteq \PC$.
\item[(ii)] {\bf Real Nullstellensatz} (see, e.g., \cite[\S 4.1]{BCR98})
$ \sqrt[\oR]I=I(V_\oR(I))$ for  an  ideal $I\subseteq \PR$.
\end{itemize}\end{theorem}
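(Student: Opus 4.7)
The plan is to handle the two parts separately. In each case the $\subseteq$-inclusion is immediate from the definitions, while the $\supseteq$-inclusion carries the content and reduces via a variable-adjunction trick to a ``weak'' form of the corresponding Nullstellensatz.

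For (i), I would first dispatch $\sqrt I\subseteq I(V_\oC(I))$: if $f^m\in I$ and $v\in V_\oC(I)$, then $f(v)^m=0$ forces $f(v)=0$. For the reverse inclusion, the classical route is Rabinowitsch's trick: given $f\in I(V_\oC(I))$, introduce a fresh variable $y$ and form the ideal $J=(I,1-yf)\subseteq \oC[\xx,y]$. Any common zero of $J$ would be a zero of $I$ at which $yf=1$, contradicting $f\equiv 0$ on $V_\oC(I)$; hence $V_\oC(J)=\emptyset$. The weak Nullstellensatz then yields $1\in J$, and writing $1=\sum_i g_i f_i + h\,(1-yf)$ with $f_i\in I$, substituting $y\leftarrow 1/f$ and clearing denominators gives $f^N\in I$ for some $N$. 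The weak Nullstellensatz itself is proved by showing that every maximal ideal of $\oC[x_1,\ldots,x_n]$ has the form $(x_1-a_1,\ldots,x_n-a_n)$, which follows from Noether normalization together with Zariski's lemma and the fact that $\oC$ is algebraically closed.

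For (ii), the easy inclusion $\sqrt[\oR]I\subseteq I(V_\oR(I))$ holds because $p(v)^{2m}+\sum_j q_j(v)^2=0$ in $\oR$ forces $p(v)=0$. The hard inclusion, due to Dubois--Risler--Krivine--Stengle, I would derive from the Positivstellensatz: if $f\in I(V_\oR(I))$, then the system $\{g=0:g\in I\}\cup\{-f^2\ge 0,\ f^2\ne 0\}$ has no real solution, and the Positivstellensatz converts this infeasibility into an algebraic certificate of the shape $f^{2m}+\sigma\in I$ with $\sigma$ a sum of squares, which is exactly membership in $\sqrt[\oR]I$. The Positivstellensatz is in turn proved abstractly: extend the preorder to a maximal one by Zorn's lemma, pass to the quotient by its support, and use real closed field theory (Artin--Lang, or Tarski--Seidenberg transfer) to produce a real point contradicting the assumption.

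The main obstacle in both parts is the underlying weak statement rather than the reduction. For (i), it is Zariski's lemma, a genuine commutative-algebra ingredient; for (ii), it is the Positivstellensatz, whose proof needs both Zorn's lemma and the machinery of real closed fields. Since the theorem is imported verbatim from \cite{CLO97} and \cite{BCR98}, I would organize the exposition around the Rabinowitsch-style reductions described above and then quote these standard references for the weak forms, rather than reproducing the abstract algebra from scratch.
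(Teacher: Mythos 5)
Your sketch is mathematically sound and matches the standard textbook proofs, but the paper itself offers no argument for this theorem: it is stated as a classical fact and merely points to \cite[\S 4.1]{CLO97} for (i) and \cite[\S 4.1]{BCR98} for (ii). So there is no in-paper proof to compare against, and you correctly notice this in your last paragraph. What you supply --- the easy inclusion from $f(v)^m=0\Rightarrow f(v)=0$ (resp.\ $p(v)^{2m}+\sum_j q_j(v)^2=0\Rightarrow p(v)=0$), the Rabinowitsch trick reducing strong to weak Nullstellensatz, Zariski's lemma/Noether normalization for the weak form, and the Positivstellensatz-based certificate for the real case --- is exactly what the cited references carry out, so this is a faithful expansion of the citation rather than an alternative route. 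The one small caution is that your Positivstellensatz reduction should be phrased carefully: the clean statement is that if $f$ vanishes on $V_\oR(I)$ then $-f^2$ is nonnegative on $V_\oR(I)$, and the Positivstellensatz for the system ``$g=0$ for $g\in I$'' gives an identity of the form $\sigma_0 + \sigma_1(-f^2) + \cdots = f^{2N} + (\text{element of } I)$ with the $\sigma_i$ sums of squares, from which one rearranges to get $f^{2N}+(\text{sum of squares})\in I$; the informal ``$f^2\ne 0$'' constraint you mention is best absorbed into this certificate form rather than treated as a separate inequality. With that rephrasing your outline is correct and complete as an expansion of the two references.
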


\subsection{The quotient algebra}
Given an ideal $I\subseteq \PK$, the quotient set $\QPK$ consists of all
cosets $[f]:=f+I=\{f+q \mid q \,\in\,I\}$ for $f\in \PK$, i.e., all equivalent
classes of polynomials of $\PK$ modulo the ideal $I$.  The quotient set
$\QPK$ is an algebra with addition $[f]+[g]:=[f+g]$, scalar multiplication
$\lambda [f]:=[\lambda f]$ and with multiplication $[f][g]:=[fg]$, for
$\lambda\in \oR$, $f,g\in \PK$.

A useful property is that, when $I$ is zero-dimensional
(i.e., $|V_{\clK}(I)|<\infty$) then
$\QPK$ is a finite-dimensional vector space and  its dimension is related to
the cardinality of $V(I)$, as indicated in Theorem \ref{theodim} below.
\begin{theorem}\label{theodim}
Let $I$ be an ideal in $\PK$. Then 
$|V_{\clK}(I)|<\infty \Longleftrightarrow
\dim \QPK<\infty.$
Moreover, $|V_{\clK}(I)|\le \dim\ \QPK$, with equality if and only if $I$ is radical.
\end{theorem}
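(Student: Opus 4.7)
The plan is to prove the three assertions — finiteness equivalence, the bound $|V_{\clK}(I)|\le\dim\PK/I$, and the radicality criterion — together, leveraging Hilbert's Nullstellensatz (part (i) of the preceding theorem). A useful preliminary observation is that base change from $\oK$ to $\clK$ preserves all the relevant data: $\dim_{\oK}\PK/I=\dim_{\clK}\clK[\xx]/I\clK[\xx]$ (flatness of the field extension), the variety $V_{\clK}(I)$ is unchanged, and radicality of $I$ in $\PK$ is equivalent to radicality of $I\clK[\xx]$ in $\clK[\xx]$ in characteristic zero. So after this reduction I may invoke the Nullstellensatz freely.

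For the finiteness equivalence, assume first that $\dim\PK/I<\infty$. Then for each coordinate $i$ the classes $[1],[x_i],[x_i^2],\ldots$ must eventually be linearly dependent modulo $I$, yielding a nonzero univariate $p_i(x_i)\in I$. Every $v\in V_{\clK}(I)$ satisfies $p_i(v_i)=0$, so $V_{\clK}(I)$ embeds in the finite product of root sets and is itself finite. Conversely, if $V_{\clK}(I)=\{v^{(1)},\ldots,v^{(d)}\}$, then the polynomial $g_i(x_i):=\prod_k(x_i-v^{(k)}_i)$ vanishes on $V_{\clK}(I)$ and, by Galois invariance (since $V_{\clK}(I)$ is stable under conjugation when $\oK=\oR$), has coefficients in $\oK$. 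Hilbert's Nullstellensatz yields $g_i^{m_i}\in I$ for some $m_i$, bounding $\alpha_i$ in every monomial representative and forcing $\PK/I$ to be finite dimensional.

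For the inequality, I would produce $d=|V_{\clK}(I)|$ linearly independent functionals on $\clK[\xx]/I\clK[\xx]$. By Lagrange interpolation applied to the $d$ distinct points, there exist $\ell_k\in\clK[\xx]$ with $\ell_k(v^{(j)})=\delta_{jk}$. The evaluation maps $ev_k\colon[f]\mapsto f(v^{(k)})$ are well defined on the quotient (since each $v^{(k)}$ is a common zero of $I$) and the existence of the $\ell_k$ shows they are linearly independent in the dual, giving $d\le\dim\clK[\xx]/I\clK[\xx]=\dim\PK/I$.

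For the equality case I would invoke the Chinese Remainder Theorem. If $I$ is radical, the Nullstellensatz gives $I\clK[\xx]=\bigcap_v\mathfrak m_v$, where $\mathfrak m_v$ is the maximal ideal at $v\in V_{\clK}(I)$; these ideals are pairwise comaximal since the points are distinct, so $\clK[\xx]/I\clK[\xx]\cong\prod_v\clK[\xx]/\mathfrak m_v\cong\clK^d$ has dimension exactly $d$. Conversely, if $d=\dim\PK/I$, the $d$ independent functionals $ev_k$ span the whole dual, so $f\in I$ if and only if $f(v^{(k)})=0$ for all $k$, that is $I=I(V_{\clK}(I))=\sqrt I$ by the Nullstellensatz. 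The main obstacle is this last structural CRT step: it is where the geometric input (distinct points forcing coprime maximal ideals) converts a radical zero-dimensional quotient into an honest product of fields; once that is in hand, everything else is linear algebra and bookkeeping with the base extension.
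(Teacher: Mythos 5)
The paper does not actually prove Theorem~\ref{theodim}; it refers the reader to \cite{CLO97}, \cite{em-07-irsea}, \cite{St04}, so there is no internal argument to compare against. Your proof is correct and follows what is essentially the standard textbook route one finds in those references: univariate elimination in each $x_i$ for the finiteness equivalence, evaluation functionals and Lagrange interpolation at the points of $V_{\clK}(I)$ for the lower bound, and the Chinese Remainder Theorem for the radical characterization.

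Two places deserve tightening. First, the base-change ``preliminary observation'' is where genuine content hides: the nontrivial direction is that $I$ radical in $\oK[\xx]$ implies $I\,\clK[\xx]$ radical in $\clK[\xx]$. This is exactly the assertion that a reduced finitely generated algebra over a \emph{perfect} field stays reduced under arbitrary field extension; it is false over imperfect fields, and it is the only place the characteristic-zero hypothesis enters. It should be cited as such rather than filed under bookkeeping, since without it the forward implication ``$I$ radical $\Rightarrow\ \dim\QPK=|V_{\clK}(I)|$'' does not go through. (The reverse implication ``$I\,\clK[\xx]$ radical $\Rightarrow I$ radical'' is the easy one, via $I=I\,\clK[\xx]\cap\oK[\xx]$ by faithful flatness.) Second, in the converse of the equality case your $ev_k$ span the dual of $\clK[\xx]/I\,\clK[\xx]$, which yields $I\,\clK[\xx]=I(V_{\clK}(I))$ as ideals of $\clK[\xx]$; writing $I=I(V_{\clK}(I))$ directly is a type mismatch since the right-hand side lives in $\clK[\xx]$, and one should instead contract: $I\,\clK[\xx]$ is a vanishing ideal, hence radical, hence $I=I\,\clK[\xx]\cap\oK[\xx]$ is radical. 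Both points are presentational; the mathematics is sound.
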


A proof of this theorem and a detailed treatment of the quotient algebra
$\QPK$ can be found e.g., in \cite{CLO97}, \cite{em-07-irsea}, \cite{St04}.

Assume $|V_{\clK}(I)|<\infty$ and set $N:=\dim \QPK$, $|V_{\clK}(I)|\le N<\infty$. Consider a set 
 $\BB:=\{b_1,\ldots,b_N\}\subseteq \PK$ for which 
$\{[b_1],\ldots,[b_N]\}$ is a basis of $\QPK$; by abuse of language we also say that $\BB$ itself is a basis of
$\QPK$. Then every $f\in \PK$ can be written in a unique way as $f=\sum_{i=1}^N c_i b_i +p,$ where $c_i\in \oK,$ $ p\in I;$
the polynomial 
$\pi_{I,\BB}(f):=\sum_{i=1}^N c_i b_i$ is called the remainder of $f$ modulo
$I$, or its {\em normal form}, with respect to the basis $\BB$. In other
words, $\Span{\BB}$ and $\QPK$ are isomorphic vector spaces. 

\subsubsection{Multiplication operators}
Given a polynomial $h\in \PK$, we can define the {\em multiplication (by $h$) operator} as
\begin{equation}
\label{mult}
\begin{array}{lccl}
\XX_h: & \QPK & \longrightarrow & \QPK\\
     &  [f] & \longmapsto &\XX_h([f])\,:=\, [hf]\,,
\end{array}
\end{equation}
Assume that $N:=\dim \QPK<\infty$. Then the multiplication operator $\XX_h$ can be represented by its matrix, again denoted $\XX_h$ for simplicity, with respect to a given basis 
$\BB=\{b_1,\ldots,b_N\}$ of $\QPK$. 

Namely, setting $\pi_{I,\BB}(hb_j):= \sum_{i=1}^N a_{ij}b_i$ for some scalars $a_{ij}\in \oK$,
 the $j$th column of $\XX_h$ is the vector $(a_{ij})_{i=1}^N$.
Define the vector
$\zeta_{\BB,v}:=(b_j(v))_{j=1}^N \in \clK^N$, whose coordinates are the
evaluations of the polynomials $b_j \in \BB$ at the point $v\in \clK^n$. The
following famous result (see e.g., \cite[Chapter 2\S4]{CLO98}, \cite{em-07-irsea}) relates the eigenvalues of the multiplication operators in $\QPK$ to the algebraic variety $V(I)$.
This result underlies the so-called eigenvalue method for solving polynomial equations and plays a central role in many algorithms, also in the present paper.

\begin{theorem}
\label{thm::muloperator} Let $I$ be a zero-dimensional ideal in $\PK$, $\BB$ a basis of $\QPK$, and $h\in \PK$.
The eigenvalues of the multiplication operator $\XX_h$ 
are the evaluations $h(v)$ of the polynomial $h$ at the points $v \in V(I)$.
Moreover, $(\XX_h)^T\zeta_{\BB,v}=h(v) \zeta_{\BB,v}$ and
the set of common eigenvectors of $(\XX_h)_{h\in \PK}$ are up to a
non-zero scalar multiple the
vectors $\zeta_{\BB,v}$ for $v\in V(I)$.
\end{theorem}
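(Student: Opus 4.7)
The plan is to prove the three assertions in order: first the eigenvector identity for $\zeta_{\BB,v}$, then the converse that every eigenvalue arises this way, and finally the common-eigenvector characterization.

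First I would start from the definition of the matrix $\XX_h$: by construction, $hb_j - \sum_{i=1}^N a_{ij}b_i\in I$, so evaluating at any $v\in V(I)$ yields $h(v)\,b_j(v)=\sum_i a_{ij} b_i(v)$. Reading this as the $j$th coordinate of a vector identity gives $(\XX_h)^T\zeta_{\BB,v}=h(v)\zeta_{\BB,v}$. To conclude that $h(v)$ is indeed an eigenvalue, I would check $\zeta_{\BB,v}\neq 0$: since $1\in \PK$ can be written as $1=\sum_i c_i b_i + q$ with $q\in I$, evaluation at $v$ gives $1=\sum_i c_i b_i(v)$, so $\zeta_{\BB,v}\neq 0$. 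In particular $h(v)$ is an eigenvalue of $(\XX_h)^T$, hence of $\XX_h$.

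For the converse inclusion on eigenvalues I would invoke Hilbert's Nullstellensatz. The polynomial $p(\xx):=\prod_{v\in V(I)}(h(\xx)-h(v))$ vanishes on $V(I)$, so it lies in $I(V(I))=\sqrt{I}$; consequently $p^m\in I$ for some $m\in\oN$, which gives $\XX_p^m=\XX_{p^m}=0$. Since $h\mapsto \XX_h$ is a ring homomorphism we have $\XX_p=\prod_{v\in V(I)}(\XX_h-h(v)\mathrm{Id})$, and the nilpotency of this product forces every eigenvalue of $\XX_h$ to lie in $\{h(v):v\in V(I)\}$.

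For the common-eigenvector part, let $\zeta$ be a common eigenvector of all $(\XX_h)^T$ with eigenvalues $\lambda(h)$. Since $h\mapsto \XX_h$ is linear, multiplicative ($\XX_{hg}=\XX_h\XX_g$) and kills $I$, the scalar map $\lambda:\PK\to\clK$ is a $\oK$-algebra homomorphism vanishing on $I$. Its kernel is a maximal ideal of $\PK$ containing $I$, which (by the weak Nullstellensatz, since $I$ is zero-dimensional) is of the form $\{f:f(v)=0\}$ for some $v\in V(I)$; hence $\lambda(h)=h(v)$. To recover $\zeta$ itself, I would identify $\zeta$ with the linear form $\ell_\zeta$ on $\QPK$ defined by $\ell_\zeta(b_j):=\zeta_j$; the eigenvector relation $(\XX_h)^T\zeta=h(v)\zeta$ translates into $\ell_\zeta(hg)=h(v)\,\ell_\zeta(g)$ for every $g\in\QPK$. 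Taking $g=1$ gives $\ell_\zeta(h)=\ell_\zeta(1)\,h(v)$ for all $h$, so $\zeta_j=\ell_\zeta(b_j)=\ell_\zeta(1)\,b_j(v)$, i.e.\ $\zeta=\ell_\zeta(1)\,\zeta_{\BB,v}$.

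The main obstacle is the last paragraph: bookkeeping with the transpose and verifying that the algebraic structure of $\QPK$ is correctly transported to the dual side. Everything else is either a direct matrix computation or a standard application of the Nullstellensatz.
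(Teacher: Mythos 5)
The paper gives no proof of Theorem~\ref{thm::muloperator}; it simply cites \cite{CLO98} and \cite{em-07-irsea}, so there is no in-paper argument to compare against. Your reconstruction is correct and is essentially the standard textbook argument from those references: the identity $(\XX_h)^T\zeta_{\BB,v}=h(v)\zeta_{\BB,v}$ by direct computation and $\zeta_{\BB,v}\ne 0$, the converse via nilpotency of $\XX_p$ for $p=\prod_{v\in V(I)}(h-h(v))$ using $\sqrt I=I(V(I))$ and multiplicativity of $h\mapsto\XX_h$, and the common-eigenvector characterization via the algebra homomorphism $\lambda$.

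Two small points deserve tightening. First, everything must be read over the algebraic closure: the eigenvalues and eigenvectors of $\XX_h$ live a priori in $\clK$, and the matrices $\XX_h$ should be viewed as acting on $\clK\otimes_{\oK}\QPK\cong\clK[\xx]/I\clK[\xx]$. In particular, in the last paragraph the map $\lambda$ takes values in $\clK$, so ``maximal ideal of $\PK$'' and ``$\{f:f(v)=0\}$ for $v\in V(I)$'' only make sense with $v\in\clK^n$, and the weak Nullstellensatz as usually stated applies to $\clK[\xx]$, not $\PK$. The cleanest fix actually bypasses maximal ideals entirely: a $\oK$-algebra homomorphism $\lambda:\PK\to\clK$ is uniquely determined by $v:=(\lambda(x_1),\ldots,\lambda(x_n))\in\clK^n$ and satisfies $\lambda(h)=h(v)$ by multiplicativity; vanishing on $I$ then says exactly $v\in V_{\clK}(I)$. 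Second, at the very end you should note $\ell_\zeta(1)\ne 0$ (which follows since $\zeta\ne 0$ and $\zeta=\ell_\zeta(1)\zeta_{\BB,v}$), so that $\zeta$ really is a \emph{nonzero} scalar multiple of $\zeta_{\BB,v}$, as the statement requires. With these adjustments the proof is complete.
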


Throughout the paper we also denote by $\XX_i:=\XX_{x_i}$ the matrix of the multiplication operator by the variable $x_i$. 
By the above theorem, the eigenvalues of the matrices $\XX_{i}$ are the $i$th coordinates of the points $v\in V(I)$. 
Thus the task of solving a system of polynomial equations is reduced to a task of numerical linear algebra once a basis of $\QPK$ and a normal form algorithm are available, permitting the construction of the multiplication matrices $\XX_i$.


\subsection{Border bases}
The eigenvalue method for solving polynomial equations from the above section  requires
the knowledge of a basis of $\QPK$ and an algorithm to compute the normal form of
a polynomial with respect to this basis. In this section we will recall a
general method for obtaining such a basis and a method to reduce polynomials
to their normal form. 

\ignore{
For any subset $S\subset\PK$, we denote by $S^+$ the set $S^{+}= S \cup x_{1}\, S \cup \cdots \cup
x_{n}\, S$, $\partial S= S^{+} \backslash S$.  
The set $S^{+}$ is called the {\em prolongation} of $S$.
For any $k\in \NN$, $S^{[k]}$ is $S^{\stackrel{k\ \mathrm{times}}{+\cdots+}}$
the result of applying $k$ times the operator $^{+}$ on $S$. By convention, $S^{[0]}$ is $S$.

If $\BB \subseteq \Mon$ contains $1$ then, for any monomial $m \in \Mon$, there exists an integer 
$k$ for which 
 $m\in \BB^{[k]}$. The {\em $\BB$-index} of $m$, denoted by by $\delta_{\BB}(m)$,
 is the smallest integer $k$ for which  $m\in \BB^{[k]}$.

A set of monomials $\BB$ is said to be {\em connected to $1$} if, 
for every monomial $m$ in $\BB$, either
$m=1$, or $m= x_{i_{0}} m'$ with $i_{0}\in [1,n]$ and $m'\in \BB$.
}

Throughout $\BB\subseteq \Mon$ is a finite set of monomials.

\begin{definition}
A rewriting family $F$ for a (monomial) set $\BB$ is a set of
polynomials  $F=\{f_i\}_{i\in \mathcal{I}}$ such that 
\begin{itemize}
\item $\supp(f_i)\subseteq \BB^+$,
\item $f_i$ has exactly {\bf one} monomial in $\partial \BB$, denoted as 
$\gamma(f_i)$  and 
called the {\it leading monomial} of $f_i$. (The polynomial $f_i$ is
normalized so that the coefficient of $\gamma(f_i)$ is $1$.)
\item  if  $\gamma(f_i)=\gamma(f_j)$ then $i=j$.
\end{itemize}
\end{definition}

\begin{definition}
We say that the rewriting family $F$ is \emph{graded} if $\deg (\gamma(f)) =\deg
(f)$ for all $f\in F$.
\end{definition}
 
\begin{definition}\label{defnormfam}
A rewriting family $F$ for $\BB$ is said to be \emph{complete} in degree $t$ if
it is  graded and satisfies  $(\partial \BB)_t \subseteq \gamma(F)$; that is,
each monomial $ m \in \partial \BB$
of  degree at most $t$ is the leading monomial of some (necessarily unique)
 $f\in F$.
\end{definition}

\begin{definition} \label{defpiBF}
Let $F$  be  a rewriting family for $\BB$, {complete} in degree $t$.
Let $\pi_{F,\BB}$ be the projection on $\Span{\BB}$ along $F$ defined recursively on the
monomials $m \in \Mon_{t}$ in the following way:
\begin{itemize}
  \item if $m \in \BB_{t}$, then $\pi_{F,\BB}(m)=m$,
 \item if $m \in (\partial \BB)_{t} \ (= (\BB^{[1]}\setminus \BB^{[0]})_{t})$,
then  $\pi_{F,\BB}(m)=m-f$, where $f$ is the (unique) polynomial in $F$ for which
 $\gamma(f)=m$,
 \item if $m \in (\BB^{[k]}\setminus \BB^{[k-1]})_{t}$ for some integer $k\ge 2$, write $m = x_{i_{0}} m'$, where 
$m' \in \BB^{[k-1]}$ and $i_{0}\in [1,n]$ is the smallest possible variable index for which such a decomposition exists,
then 
   $\pi_{F,\BB}(m)=\pi_{F,\BB}(x_{i_{0}}\,\pi_{F,\BB}(m') )$. 
\end{itemize}
\end{definition}
One can easily verify that $\deg(\piFB(m))\le \deg(m)$ for $m\in\Mon_t$. The map $\piFB$ extends by linearity to a linear map from 
$\oK[\xx]_t$ onto $\Span{\BB}_t$. By construction, 
$f=\gamma(f)- \piFB(\gamma(f))$ and $\piFB(f)=0$ for all $f\in F_t$. 
The next theorems show that, under some natural commutativity condition,
the map $\piFB$ coincides with the linear projection from $\oK[\xx]_t$ onto $\Span{\BB}_{t}$
along the vector space $\SpanD{F}{t}$, and they introduce the notion of border bases.

\begin{definition} \label{def:borderbasis}
Let $\BB\subset \Mon$ be connected to $1$. 
A family $F\subset \PK$ is a border basis for $\BB$ if it is  a rewriting family
for $\BB$, {complete} in all degrees, and such that $\PK= \Span{\BB} \oplus (F).$
\end{definition}
\ignore{
So far we did not discuss how to decompose a monomial $m\in (\BB^{[k]}\setminus \BB^{[k-1]})_{t}$ into a monomial $m'\in \BB^{[k-1]})_{t}$ such that $m = x_{i_{0}} m'$ in the previous definition and the outcome may actually depend on this selection. The following theorem addresses this issue and defines the concept of border bases:
}

An algorithmic way to check that we have a border basis is based on
the following result, that we recall from \cite{Mourrain2005}:
\begin{theorem}\label{thmcom}
Assume that $\BB$ is connected to $1$ and let $F$ be a rewriting family for
 $\BB$, complete in degree $t\in \oN$.
Suppose that, for all $m\in \Mon_{t-2}$,
\begin{equation}\label{eqcom}
   \pi_{F,\BB}(x_i\,\pi_{F,\BB}(x_j\,m))=\pi_{F,\BB}(x_j\,\pi_{F,\BB}(x_i\,m))
\ \text{ for all } i,j\in[1,n].
\end{equation}
Then $\pi_{F,\BB}$ coincides with the linear projection of $\PK_{t}$ on $\Span{\BB}_{t}$
along the vector space $\SpanD{F}{t}$; that is,
$\PK_t=\Span{\BB}_t\oplus \SpanD{F}{t}.$
\end{theorem}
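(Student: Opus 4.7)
The claim says that $\piFB$ is the linear projection of $\PK_{t}$ onto $\Span{\BB}_{t}$ along $\SpanD{F}{t}$; equivalently, $\piFB$ is linear, has image contained in $\Span{\BB}_{t}$, is the identity on $\Span{\BB}_{t}$, and has kernel equal to $\SpanD{F}{t}$. The first three properties are essentially built into Definition~\ref{defpiBF}: the recursion extends uniquely and linearly to all of $\PK_{t}$, and on $\BB_{t}$ the map is the identity by construction. Moreover, one checks immediately that $\piFB(f)=0$ for $f\in F_{t}$: writing $f=\gamma(f)+r$ with $r\in \Span{\BB}$, one has $\piFB(\gamma(f))=\gamma(f)-f=-r$ and $\piFB(r)=r$. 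The real work is in controlling the kernel, and this is where the commutativity hypothesis~\eqref{eqcom} must enter.

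\textbf{Step 1: the spanning property $\PK_{t}=\Span{\BB}_{t}+\SpanD{F}{t}$.} I would show, by induction on $\delta_{\BB}(m)$, that $m-\piFB(m)\in \SpanD{F}{t}$ for every $m\in \Mon_{t}$. The base case $m\in \BB$ is trivial; if $m\in \partial \BB$ with $\gamma(f_{m})=m$, the difference equals $f_{m}\in F_{t}\subseteq \SpanD{F}{t}$. In the recursive step, writing $m=x_{i_{0}}m'$ with $\delta_{\BB}(m')<\delta_{\BB}(m)$, the identity
\[m-\piFB(m)=x_{i_{0}}(m'-\piFB(m'))+\bigl(x_{i_{0}}\piFB(m')-\piFB(x_{i_{0}}\piFB(m'))\bigr)\]
reduces the claim to the inductive hypothesis on $m'$ together with the $\partial\BB$ case applied monomial-by-monomial to $x_{i_{0}}\piFB(m')\in \Span{\BB^{+}}_{t}$. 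Extending by linearity, every $p\in \PK_{t}$ satisfies $p-\piFB(p)\in \SpanD{F}{t}$. No use of~\eqref{eqcom} is needed here.

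\textbf{Step 2 (main obstacle): stability of $\piFB$.} The crux is to show that, under~\eqref{eqcom},
\[\piFB(x_{i}p)=\piFB(x_{i}\piFB(p))\quad \text{for all } p\in \PK_{t-1},\ i\in [1,n].\qquad(\star)\]
I would prove $(\star)$ for monomials $m\in \Mon_{t-1}$ by induction on $\delta_{\BB}(m)$ and extend linearly. The base case $m\in \BB$ is trivial. For $\delta_{\BB}(m)=k\geq 1$, decompose $m=x_{j_{0}}m'$ with $\delta_{\BB}(m')=k-1$, so by definition $\piFB(m)=\piFB(x_{j_{0}}\piFB(m'))$. Since $\piFB(m')$ has degree at most $t-2$, applying~\eqref{eqcom} monomial-by-monomial yields
\[\piFB(x_{i}\piFB(m))=\piFB(x_{i}\piFB(x_{j_{0}}\piFB(m')))=\piFB(x_{j_{0}}\piFB(x_{i}\piFB(m'))),\]
and the inductive hypothesis applied to $m'$ collapses the inner expression to $\piFB(x_{i}m')$. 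What remains is $\piFB(x_{j_{0}}\piFB(x_{i}m'))=\piFB(x_{j_{0}}(x_{i}m'))=\piFB(x_{i}m)$, which is $(\star)$ applied to the monomial $x_{i}m'$ and the variable $x_{j_{0}}$. This is the delicate point: one may have $\delta_{\BB}(x_{i}m')=k$, so a plain induction on $\delta_{\BB}$ does not close. I expect to resolve this either by strengthening to a double induction on $(\delta_{\BB}(m),\deg(m))$, or, more conceptually, by a confluence argument in the style of Newman's lemma, in which~\eqref{eqcom} plays the role of local confluence and the well-foundedness of $\delta_{\BB}$ provides termination; iterating~\eqref{eqcom} then collapses any same-level rewriting to strictly smaller $\BB$-index.

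\textbf{Step 3: conclusion.} Once $(\star)$ is established, a straightforward induction on $|\alpha|$ gives $\piFB(x^{\alpha}p)=\piFB(x^{\alpha}\piFB(p))$ for $p\in \PK_{t-|\alpha|}$. Applied to $p=f\in F$ with $|\alpha|+\deg(f)\leq t$, this yields $\piFB(x^{\alpha}f)=\piFB(x^{\alpha}\piFB(f))=0$ since $\piFB(f)=0$, so $\SpanD{F}{t}\subseteq \ker \piFB$. If $p\in \Span{\BB}_{t}\cap \SpanD{F}{t}$, then $p=\piFB(p)=0$, hence $\Span{\BB}_{t}\cap \SpanD{F}{t}=\{0\}$. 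Combined with Step~1, this produces the direct sum decomposition $\PK_{t}=\Span{\BB}_{t}\oplus \SpanD{F}{t}$, and $\piFB$ is then identified with the projection onto $\Span{\BB}_{t}$ along $\SpanD{F}{t}$.
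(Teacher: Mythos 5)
Your Steps~1 and~3 mirror the paper's proof almost exactly: the spanning identity via the telescoping decomposition of $m-\piFB(m)$, and the conclusion that $\SpanD{F}{t}\subseteq\Ker\piFB$ combined with $\Span{\BB}_{t}\cap\SpanD{F}{t}=\{0\}$. You have also correctly identified where the real work lies, namely the stability claim $(\star)$, which is equivalent to the paper's key Equation~\eqref{eqFB}. However, the gap you flag in Step~2 is genuine and is not resolved by either of your suggested fixes. The double induction on $(\delta_{\BB}(m),\deg(m))$ does not close: when you pass from $m$ to the monomial $x_{i}m'$ that you must control, you have $\deg(x_{i}m')=\deg(m)$ and $\delta_{\BB}(x_{i}m')$ can equal $\delta_{\BB}(m)$, so neither component of the measure drops. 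The Newman-style confluence argument is plausible in spirit but is not actually carried out, and the termination it requires would have to come from some other measure.

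The paper's own fix is both cleaner and strictly simpler than what you sketch. Rather than proving the directed identity $\piFB(x_{i}p)=\piFB(x_{i}\piFB(p))$, the paper proves order-independence: for $m=x_{i_{1}}\cdots x_{i_{l}}$, the fully iterated expression $\piFB(x_{i_{1}}\piFB(x_{i_{2}}\cdots\piFB(x_{i_{l}})\cdots))$ equals $\piFB(m)$ for every ordering of the factors (Equation~\eqref{eqFB}). This is proved by induction on $\deg(m)$, not on $\delta_{\BB}$. The crucial point is a preliminary lemma, deduced directly from~\eqref{eqcom}: if $m=x_{i_{1}}m'=x_{i_{2}}m''$ with $i_{1}\neq i_{2}$, then $\piFB(x_{i_{1}}\piFB(m'))=\piFB(x_{i_{2}}\piFB(m''))$ (write $m'=x_{i_{2}}m'''$ and $m''=x_{i_{1}}m'''$ and apply~\eqref{eqcom} to $m'''\in\Mon_{t-2}$). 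Given this, the inductive step compares the canonical decomposition $m=x_{i'}m'$ (from Definition~\ref{defpiBF}, or from connectedness of $\BB$ when $m\in\BB$) with an arbitrary one $m=x_{i_{1}}m''$: the preliminary lemma swaps the outer variables, and the induction hypothesis applies to $m''$, whose degree is strictly smaller. Because the degree always drops when peeling off a variable, the induction closes with no need for a two-parameter measure. To salvage your approach along your lines, you would need either to reformulate $(\star)$ so that the canonical decomposition is applied to $x_{i}m$ rather than to $m$ and then induct on $\deg(m)$, or to prove the order-independence statement directly as the paper does; as written, Step~2 remains incomplete.
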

\begin{proof} 
Equation~\eqref{eqcom} implies that any choice of decomposition of $m\in
\Mon_{t}$ as a product of variables yields the same result after applying
$\pi_{F,\BB}$. Indeed, let $m=x_{i_{1}}\,m' =x_{i_{2}}\, m''$  
 with $i_{1}\neq i_{2}$ and $m', m'' \in \Mon_{t-1}$. Then
there exists $m''' \in \Mon_{t-2}$ such
that $m'= x_{i_{2}} \, m'''$, $m''= x_{i_{1}}\, m'''$. By the relation \eqref{eqcom}
we have:
\begin{eqnarray*}
\lefteqn{\pi_{F,\BB} (x_{i_{1}}\, \pi_{F,\BB}(m')) }\\
 & = & \pi_{F,\BB}(x_{i_{1}}\, \pi_{F,\BB}(x_{i_{2}} m''')) 
 =  \pi_{F,\BB}(x_{i_{2}}\, \pi_{F,\BB}(x_{i_{1}} m''')) \\
 & = & \pi_{F,\BB} (x_{i_{2}}\, \pi_{F,\BB}(m'')).
\end{eqnarray*}
Let us prove by induction on $l=\deg(m)$
 that for a monomial
$m= x_{i_{1}} \cdots x_{i_{l}}\in \Mon_t$, 
\begin{equation}\label{eqFB}
\pi_{F,\BB}(m) = \pi_{F,\BB}(x_{i_{1}} \pi_{F,\BB}(x_{i_{1}} \cdots \pi_{F,\BB}(x_{i_{l}})\cdots),
\end{equation}
does not depend on the order in which we take the monomials in 
the decomposition $m= x_{i_{1}} \cdots x_{i_{l}}$:
\begin{itemize}
 \item Either $m\in \BB$.  As $\BB$ is connected to $1$, there exists $i'\in [1,n]$
   and $m' \in \BB_{t-1}$ such that $m = \piFB(m) = \piFB(x_{i'}m')=
  \piFB(x_{i'}\,
   \piFB(m'))$, from which we deduce \eqref{eqFB}
using the  induction hypothesis applied to $m'$ and relation \eqref{eqcom}. 
 \item Or $m\not\in \BB$. Then,  by definition of $\piFB$, there exists $i'\in [1,n]$
   and $m' \in \Mon_{t-1}$ such that 
$\piFB(m) =  \piFB(x_{i'}\, \piFB(m'))$, from which we deduce \eqref{eqFB} in a similar way
using the induction hypothesis applied to $m'$ and relation \eqref{eqcom}. 
\end{itemize}
The map $\piFB$ defines a projection of $\PK_{t}$ on $\Span{\BB}_{t}$. 
It suffices now to show that $\Ker \piFB = \SpanD{F}{t}$.
First we show  
that $m-\piFB(m)\in \SpanD{F}{s}$ for
all $m\in \Mon_s$, using induction on  $s=0,\ldots,t$. The base case $s=0$ is obvious; indeed $\piFB(1)=1$ since $1\in \BB$,  and $0\in \SpanD{F}{0}$.
Consider $m\in \Mon_{s+1}$. Write $m=x_{i_0}m'$ where $m'\in\Mon_s$ and 
$\piFB(m)=\piFB(x_{i_0} \piFB(m'))$ (recall Definition \ref{defpiBF}). 
We have:
$$m-\piFB(m) = \underbrace{x_{i_0}(m'-\piFB(m'))}_{:=q} 
+ \underbrace{x_{i_0}\piFB(m')- \piFB(x_{i_0} \piFB(m'))}_{:=r}.$$
By the induction assumption, $m'-\piFB(m') \in \SpanD{F}{s}$ and thus 
$q\in \SpanD{F}{s+1}$.
Write $\piFB(m')=\sum_{b\in \BB_s}\lambda_b b$ ($\lambda_b\in\oK$). Then,
$r= \sum_{b\in \BB_s}\lambda_b (x_{i_0}b-\piFB(x_{i_0}b))$, where 
$x_{i_0}b-\piFB(x_{i_0}b)=0$ if $x_{i_0}b\in\BB$, and 
$x_{i_0}b-\piFB(x_{i_0}b) $ is a polynomial of $F_{s+1}$ otherwise. This implies
$r\in  \SpanD{F}{s+1}$ and thus $m-\piFB(m)\in \SpanD{F}{s+1}$.
Thus we have shown that
$\oK[\xx]_t =\Span{\BB}_t + \SpanD{F}{t}.$
Next, observe that $\SpanD{F}{t}\subseteq \Ker\piFB$, which follows from the fact that 
$F_t\subseteq \Ker\piFB$ together with (\ref{eqFB}).
This implies  that $\Span{\BB}_t \cap \SpanD{F}{t}=\{0\}$ and thus the equality
$\SpanD{F}{t}= \Ker\piFB$.
\ignore{
We have to show that $\pi_{F,\BB}$ coincides with the projection $P$ of
$\PK_{t}$ on $\Span{\BB}_{t}$ along $\SpanD{F}{t}$, or equivalently that $\Ker
\pi_{F,\BB} = \SpanD{F}{t}$.
We do it by induction on $t$ and on the degree of the monomials:\\ 
It is true that $\pi_{F,\BB}(1)=1$ (since $1 \in \BB$) so that
$(\Ker \pi_{F,\BB})_{0} =\Span{F}_{0} =\{0\}$. For any monomial
$m\not=1$ in $\PK_{t}$, $\exists m'\in \PK_{t-1}\mbox{ and } i_0\in [1,n] \mbox{ such
  that } m=x_{i_0}m'$. Now by induction, $F$ is a complete rewriting family in degree $t-1$
and the relations \eqref{eqcom} hold in degree $t-1$ so that 
coincides with the projection of $\PK_{t-1}$ on $\Span{\BB}_{t-1}$
along the vector space $\SpanD{F}{t-1}$.
Thus, we have $m'- \pi_{F,\BB}(m') \in \SpanD{F}{t-1}$ and 
\begin{align*}
{m-\pi_{F,\BB}(m)= x_{i_{0}}(m' - \pi_{F,\BB}(m') )}
 +  \comment{\left( x_{i_{0}}\pi_{F,\BB}(m') -\pi_{F,\BB}(x_{i_0} \pi_{F,\BB}(m') ) \right)}
\end{align*}
with $ x_{i_{0}}(m' - \pi_{F,\BB}(m') ) \in \SpanD{F}{t}$ and
$x_{i_{0}}\pi_{F,\BB}(m') -\pi_{F,\BB}(x_{i_0} \pi_{F,\BB}(m')) \in \Span{F_{t}}
\subset \SpanD{F}{t}$, since $F$ is a complete rewriting family in degree $t$.

Thus $m=b+k$ with $b=\pi_{F,\BB}(m)\in \Span{\BB}$ and $k\in
\SpanD{F}{t}$. This shows that $\ker \pi_{F,\BB}\subset  \SpanD{F}{t}$.

To conclude, we need to prove that $\Span{\BB} \cap  \SpanD{F}{t} =\{0\}$,
which implies that $\ker \pi_{F,\BB}= \SpanD{F}{t}$.
Let us assume that there exists $b\in\Span{\BB}_{t} \cap  \SpanD{F}{t}$ with $b\neq 0$.
Then $b =\sum_{f\in F, |\alpha|+\deg(f)\le t}\lambda_{\alpha,f}\,\xx^{\alpha}\,
f$ ($\lambda_{f}\in \oK$).
Notice that for any $f\in F$ and any variables $x_{i_{1}}, \ldots, x_{i_{l}}$ with
$l\leq t-\deg(f)$, we have
\[ 
\pi_{F,\BB}(x_{i_{1}} \cdots x_{i_{l}} f) = 
\pi_{F,\BB}(x_{i_{1}} \pi_{F,\BB}(\cdots \pi_{F,\BB}(x_{i_{l}} \pi_{F,\BB}(f) \cdots
) = 0.
\] 
Therefore,  we have  $\pi_{F,\BB}(\xx^{\alpha}\,f)=0$ if
$|\alpha|+\deg(f)\le  t$ and 
\[ 
\pi_{F,\BB}(b)=b= \sum_{f\in F, |\alpha|+\deg(f)\le  t}\lambda_{\alpha,f}\, \pi_{F,\BB}(\xx^{\alpha}\,f)
= 0,
\]

This is a contradiction, which implies that $\Span{\BB}_{t}\cap  \SpanD{F}{t}=\{0\}$.
It concludes the proof that $\pi_{F,\BB}$ is the projection of 
$\PK_{t}$ on $\Span{\BB}$ along $\SpanD{F}{t}.$
}
\end{proof}
 
In order to have a simple test and effective way to test the
commutation relations \eqref{eqcom},
we introduce now the commutation polynomials.
\begin{definition}
Let $F$ be a rewriting family and $f,f'\in F$.
Let $m,m'$ be the smallest degree monomials for which 
$m\,\gamma(f)=m'\, \gamma(f')$.
Then the polynomial 
$C(f,f'):= m f-m'f' = m' \piFB(f')-m\piFB(f)$ is called the \emph{commutation  polynomial} of $f,f'$.
\end{definition}
 
\begin{definition}
For a rewriting family $F$ with respecet to $\BB$, we denote by $C^{+}(F)$ the set of polynomials
of the form $m\, f - m'\, f',$ where $f,f'\in F$ and 
$m,m' \in \{0,1,x_{1},\ldots,x_{n}\}$ satisfy 
\begin{itemize}
 \item either $m\, \gamma(f)=m'\, \gamma(f')$,
 \item or  $m\, \gamma(f)\in \BB$ and $m'=0$.
\end{itemize}
\end{definition}
Therefore, $C^{+}(F) \subset \Span{\BB^{+}}$ and $C^+(F)$ contains all commutation
polynomials $C(f,f')$ for $f,f'\in F$ whose monomial multipliers $m, m'$ are
of degree $\leq 1$.
The next result can be deduced using Theorem \ref{thmcom}.

\begin{theorem}\label{thmnfdegd}
Let $\BB\subset \Mon$ be connected to $1$ and let $F$ be a rewriting family for $\BB$, complete in degree $t$. 
If for all $c\in C^{+}(F)$ of degree $\le t$, $\pi_{F,\BB}(c) = 0$, 
then $\pi_{F,\BB}$  is the projection of $\PK_{t}$  on $\Span{\BB}_{t}$ along
$\SpanD{F}{t}$, ie. $\PK_{t}=\Span{\BB}_{t}\oplus \SpanD{F}{t}$. 
\end{theorem}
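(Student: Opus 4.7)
The plan is to invoke Theorem~\ref{thmcom}: it suffices to verify the commutation relations \eqref{eqcom} for every pair $i,j\in[1,n]$ and every $m\in\Mon_{t-2}$, after which Theorem~\ref{thmcom} supplies the decomposition $\PK_t = \Span{\BB}_t\oplus\SpanD{F}{t}$.

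The key preliminary observation is that $\piFB(f)=0$ for every $f\in F_t$: Definition~\ref{defpiBF} gives $\piFB(\gamma(f))=\gamma(f)-f$, so $f=\gamma(f)-\piFB(\gamma(f))$, and since $\piFB$ is the identity on $\Span{\BB}_t$, applying $\piFB$ yields $\piFB(f)=0$. By linearity of $\piFB$, this implies that, for any polynomial $p$ of appropriate degree,
\[
\piFB(p\,f) \;=\; \piFB(p\,\gamma(f)) - \piFB(p\,\piFB(\gamma(f))).
\]
Combined with the hypothesis $\piFB(c)=0$ for $c\in C^+(F)$ of degree $\le t$, this identity is what translates the vanishing on commutation polynomials into commutation-type equalities for $\piFB$ itself.

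I would then prove \eqref{eqcom} by induction on $\deg(m)$. The base case $m=1$ proceeds by case analysis on whether $x_i,x_j$ lie in $\BB$ or in $\partial\BB$. When $x_i,x_j\in\BB$ both sides collapse to $\piFB(x_ix_j)$. When $x_i=\gamma(f_i)\in\partial\BB$ and $x_j\in\BB$ (or symmetrically), the required equality reduces, after expanding $\piFB(x_i)=x_i-f_i$, to $\piFB(x_jf_i)=0$, which is supplied either by the type-2 polynomial $x_jf_i\in C^+(F)$ (when $x_jx_i\in\BB$) or by the type-1 polynomial $x_jf_i-f'\in C^+(F)$ together with $\piFB(f')=0$ (when $x_jx_i=\gamma(f')\in\partial\BB$). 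When both $x_i=\gamma(f_i)$ and $x_j=\gamma(f_j)$ lie in $\partial\BB$, the equality reduces to $\piFB(x_if_j)=\piFB(x_jf_i)$, which is precisely the vanishing on the type-1 polynomial $x_if_j-x_jf_i\in C^+(F)$.

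For the inductive step with $\deg(m)\ge 1$, write $m=x_k m''$ and unfold $\piFB(x_jm)$ and $\piFB(x_im)$ using the recursive Definition~\ref{defpiBF}; linearity of $\piFB$ then lets one push reductions past linear combinations expressed in $\Span{\BB}$. The difference $\piFB(x_i\piFB(x_jm))-\piFB(x_j\piFB(x_im))$ can be rewritten as a telescoping sum in which every term is either of the form $\piFB(c)$ for some $c\in C^+(F)$ (hence zero by hypothesis) or an instance of the commutation relation at a monomial of strictly smaller degree (hence zero by the induction hypothesis). The main obstacle I anticipate is the bookkeeping of these nested reductions: each swap between the variables $x_i$ and $x_j$ occurring inside the recursive applications of $\piFB$ must be matched explicitly with either a polynomial of $C^+(F)$ or an instance of the induction hypothesis. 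Once this is established, Theorem~\ref{thmcom} delivers the conclusion $\PK_t=\Span{\BB}_t\oplus\SpanD{F}{t}$.
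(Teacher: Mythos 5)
Your high-level plan is exactly the paper's: reduce to Theorem~\ref{thmcom} by verifying the commutation relations~\eqref{eqcom} for all $m\in\Mon_{t-2}$. Your base case $m=1$ is also carried out correctly, and in fact your three-way case analysis on $(x_im,x_jm)\in\BB\times\BB$, $\BB\times\partial\BB$, $\partial\BB\times\partial\BB$ (matched against the type-1 and type-2 elements of $C^+(F)$) is precisely the paper's argument, which the paper runs for every $m\in\BB_{t-2}$, not only $m=1$.

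The place where the ``telescoping sum'' plan is too vague is the structure of the inductive step, and the paper's proof shows exactly how to split it so that the bookkeeping you worry about disappears. First, observe that for any $m\in\BB_{t-2}$ of \emph{any} degree, the commutation~\eqref{eqcom} does \emph{not} come from the induction hypothesis on $\deg(m)$; it is the same degree-$1$-multiplier $C^+(F)$ argument you did for $m=1$, applied verbatim with $m$ in place of $1$ (this is the point of $C^+(F)$ having multipliers only in $\{0,1,x_1,\dots,x_n\}$). This yields~\eqref{eqcom} on all of $\Span{\BB}_{t-2}$, with no induction involved. Second, for $m\notin\BB$ you need the separate well-definedness identity $\piFB(x_im)=\piFB(x_i\,\piFB(m))$, i.e.\ that the recursive reduction is independent of the decomposition $x_im=x_{i'}m'$ chosen in Definition~\ref{defpiBF}; this is what actually uses the induction (on $t$, or equivalently on $\deg$), because the two decompositions differ by a swap of two variables acting on a monomial of degree $\deg(m)-1$. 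Once these two facts are separated, the final step is not a telescoping sum but a clean three-equality chain: $\piFB(x_i\piFB(x_jm))=\piFB(x_i\piFB(x_j\piFB(m)))=\piFB(x_j\piFB(x_i\piFB(m)))=\piFB(x_j\piFB(x_im))$, using the well-definedness identity on the two ends and the $\BB$-commutation in the middle (since $\piFB(m)\in\Span{\BB}_{t-2}$). Without isolating the well-definedness identity as its own lemma, the ``telescoping'' bookkeeping is genuinely hard to make rigorous, because the nested recursion in Definition~\ref{defpiBF} may choose a different variable than the one you want to swap.
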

\begin{proof} 
Let us prove by induction on $t$ that if $F$ is complete in degree $t$
and for all $c\in C^{+}(F)$ of degree $\le t$, $\pi_{F,\BB}(c) = 0$ then any
$m\in \Mon_{t-2}$ satisfies \eqref{eqcom}, which in view of Theorem
\ref{thmcom} suffices to prove the theorem.

Let us first prove  that  \eqref{eqcom} holds for $m\in \BB_{t-2}$. We distinguish several cases.
If $x_im,x_jm\in \BB$ then (\ref{eqcom})  holds trivially.
Suppose next that $x_im,x_jm\in \partial\BB$. Then, $f:= x_im-\piFB(x_im)$ and $ f':=x_jm-\piFB(x_jm)$ belong to $F_{t-1}$.
As $x_j\gamma(f)=x_i\gamma(f')$, $x_jf-x_if'\in C^+(F)$ and thus, by our assumption,
$\piFB(x_j f) = \piFB(x_if')$, which gives (\ref{eqcom}).
Suppose now that $x_im\in \partial\BB$ and $x_jm\in\BB$. As before 
$f=x_im-\piFB(x_im)\in F_{t-1}$. If $x_j \gamma(f)=x_ix_jm\in\BB$ then
$x_jf\in C^+(F)$ and thus $\piFB(x_jf)=0$ gives (\ref{eqcom}).
Otherwise, $x_ix_jm\in\partial \BB$ and let $f':=x_ix_jm-\piFB(x_ix_jm)\in F_t$.
Now, $x_j\gamma(f)=\gamma(f')$ implies $x_jf-f' \in C^+(F)$ and thus
$\piFB(x_jf-f')=0$ which gives again (\ref{eqcom}).
This shows (\ref{eqcom}) in the case when $m\in \BB_{t-2}$, and thus we have
\begin{equation}\label{eqonB}
\pi_{F,\BB}(x_{i_2} \pi_{F,\BB}(x_{i_{1}} b)) = \pi_{F,\BB}(x_{i_1} \pi_{F,\BB}(x_{i_{2}} b)) \ \text{ for all } b\in  \Span{\BB}_{t-2}.
\end{equation}

\ignore{
For any $m \in \BB_{t-2}$ and any $i_{1}\neq i_{2}$ such that
$x_{i_{1}}\,m \in \partial \BB$, $x_{i_{2}}\,m \in \partial \BB$,
there exists $f, f' \in F_{t-1}$ such that
$\gamma(f)=x_{i_{1}}\,m$,  $\gamma(f')=x_{i_{2}}\,m$.
Thus, we have $\pi_{F,\BB}(x_{i_{1}}\,m) = \gamma(f)-f$, $\pi_{F,\BB}(x_{i_{2}}\,m) \gamma(f')-f'$ and $C(f,f') = x_{i_{2}}\, f- x_{i_{1}}\, f' \in
\Span{\BB^{+}}_{t}$. Consequently, 
\begin{eqnarray*}
\lefteqn{\pi_{F,\BB}(C(f,f'))= \pi_{F,\BB}(x_{i_2}f-x_{i_1}f') }\\
& = &
\pi_{F,\BB}(x_{i_2}(f-\gamma(f))- x_{i_1}(f'-\gamma(f')))\\
& = & - \pi_{F,\BB}(x_{i_2} \pi_{F,\BB}(x_{i_{1}} m))
+\pi_{F,\BB}(x_{i_1} \pi_{F,\BB}(x_{i_{2}} m)),
\end{eqnarray*}
which shows the relation \eqref{eqcom} for all $m\in \BB_{t-2}$
with $x_{i_{1}}\,m \in \partial \BB$, $x_{i_{2}}\,m \in \partial \BB$.
A similar proof applies for $m\in \BB_{t-2}$ if $x_{i_{1}}\,m \in \BB
$ or $x_{i_{2}}\,m \in \BB$. 
This implies that for all $b \in \Span{\BB}_{t-2}$,
\begin{equation}\label{eqonB}
\pi_{F,\BB}(x_{i_2} \pi_{F,\BB}(x_{i_{1}} b)) = \pi_{F,\BB}(x_{i_1} \pi_{F,\BB}(x_{i_{2}} b)).
\end{equation}
}

Let us now consider $m\in \Mon_{t-2} \backslash \BB_{t-2}$. By definition $\pi_{F,\BB}(x_{i}\, m)= \pi_{F,\BB}(x_{i'}\, \pi_{F,\BB}(m'))$
for some $m' \in \Mon_{t-2}$ and $i'\in [1,n]$ such that  $x_{i}\, m
=x_{i'}\, m'$. If $i \neq i'$ there exists $m''\in \Mon_{t-3}$ such that 
$ m=x_{i'}\, m'',  m'=x_{i}\, m''$.  As $F$ is also complete in degree $t-1$
and for all $c\in C^{+}(F)$ of degree $\le t-1$, $\pi_{F,\BB}(c) = 0$,
by induction hypothesis we have  
$$ 
\pi_{F,\BB}(x_{i'}\, \pi_{F,\BB}(x_{i} m'')) = \pi_{F,\BB}(x_{i}\, \pi_{F,\BB}(x_{i'} m'')),
$$
so that $\pi_{F,\BB}(x_{i}\, m) = \pi_{F,\BB}(x_{i}\, \pi_{F,\BB}(m))$.
If $i=i'$, we have by definition $\pi_{F,\BB}(x_{i}\, m) = \pi_{F,\BB}(x_{i}\, \pi_{F,\BB}(m))$.

As $\piFB(m)=m$ for $m \in \BB_{t-2}$, we deduce that
\begin{equation}\label{eqclaim}
\piFB(x_im)=\piFB(x_i\piFB(m)) \ \text{ for all } m\in \Mon_{t-2}, \ i\in [1,n].
\end{equation}

Now, using (\ref{eqclaim}),
$\piFB(x_i\piFB(x_jm))$ is equal to $\piFB(x_i\piFB(x_j\piFB(m)))$
 which in turn is equal to 
$\piFB(x_j\piFB(x_i\piFB(m)))$ (using (\ref{eqonB})) and thus to 
$\piFB(x_j\piFB(x_im))$ (using again  (\ref{eqclaim})).
We can now apply Theorem \ref{thmcom} and conclude the proof.
\ignore{
Since $b=\pi_{F,\BB}(m)\in \Span{\BB}_{t-2}$, the relation \eqref{eqonB} implies that
\begin{eqnarray*}
{\pi_{F,\BB}(x_{i_2} \pi_{F,\BB}(x_{i_{1}} m))  }
 & = &  \pi_{F,\BB}(x_{i_2} \pi_{F,\BB}(x_{i_{1}} \pi_{F,\BB}(m)))\\
 & = &  \pi_{F,\BB}(x_{i_1} \pi_{F,\BB}(x_{i_{2}} \pi_{F,\BB}(m)))\\
 & = &  \pi_{F,\BB}(x_{i_1} \pi_{F,\BB}(x_{i_{2}} m)),
\end{eqnarray*}
which proves that the relations \eqref{eqcom} hold. 
By Theorem~\ref{thmcom}, we deduce that $\pi_{F,\BB}$ is the projection of $\PK_{t}$  on $\Span{\BB}_{t}$ along
$\SpanD{F}{t}$.
}
\end{proof}
 
\begin{theorem}[border basis, \cite{Mourrain2005}]\label{thmnfanyt} \label{CorDegIdeal}
Let $\BB\subset \Mon$ be connected to 1 and let $F$ be a rewriting family for $\BB$, complete in any degree. Assume that $\pi_{F,\BB}(c) = 0$ for all 
$c \in C^{+}(F)$.
Then $\BB$ is a basis of $\PK/(F)$, $\PK=\Span{\BB} \oplus (F)$,
 and
$(F)_{t}=\SpanD{F}{t}$ for all $t\in \oN$; the set $F$ is a \emph{border basis} of the ideal
$I=(F)$ with respect to $\BB$.
\end{theorem}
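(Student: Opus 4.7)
The plan is to bootstrap from Theorem \ref{thmnfdegd}, which gives the decomposition $\PK_t = \Span{\BB}_t \oplus \SpanD{F}{t}$ at each fixed degree $t$. The hypothesis $\pi_{F,\BB}(c)=0$ for every $c \in C^+(F)$ transfers to each degree level: for any fixed $t$, the finitely many $c\in C^+(F)$ of degree at most $t$ also satisfy $\piFB(c)=0$, and $F$ is complete in degree $t$, so Theorem \ref{thmnfdegd} applies. This gives a ``graded'' family of decompositions that is compatible across degrees in the following sense: if $t\le t'$, then $\Span{\BB}_t\subseteq \Span{\BB}_{t'}$ and $\SpanD{F}{t}\subseteq \SpanD{F}{t'}$, so for any $p\in \PK_t$ the unique decomposition $p=b+q$ in $\PK_{t'}$ must coincide with the one in $\PK_t$, forcing $b\in \Span{\BB}_t$ and $q\in \SpanD{F}{t}$.

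The main obstacle, and the step I would handle most carefully, is proving $(F)_t=\SpanD{F}{t}$. The nontrivial inclusion is $(F)_t\subseteq \SpanD{F}{t}$: an element $p\in(F)_t$ may a priori only be expressible as $\sum_f p_f f$ using multipliers $p_f$ of degrees larger than $t-\deg(f)$, since cancellation of high-degree terms can produce a low-degree result. To handle this, I would pick $t'$ large enough that $p\in \SpanD{F}{t'}$ (such a $t'$ exists because $p$ has a finite ideal representation). Then $p\in \SpanD{F}{t'}\cap \PK_t$, and in the direct sum $\PK_{t'}=\Span{\BB}_{t'}\oplus\SpanD{F}{t'}$ the unique decomposition of $p$ is $p=0+p$. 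Applying the compatibility just described, this decomposition must come from $\PK_t=\Span{\BB}_t\oplus \SpanD{F}{t}$, so $p\in \SpanD{F}{t}$, proving the claimed equality.

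Once $(F)_t=\SpanD{F}{t}$ is established, the global direct sum $\PK=\Span{\BB}\oplus(F)$ follows directly: for the sum, any $p\in \PK$ lies in some $\PK_t=\Span{\BB}_t\oplus\SpanD{F}{t}\subseteq \Span{\BB}+(F)$; for the intersection, any $p\in \Span{\BB}\cap(F)$ lies in $\Span{\BB}_t\cap(F)_t=\Span{\BB}_t\cap\SpanD{F}{t}=\{0\}$ for $t\ge\deg(p)$. Finally, since $\BB$ is a set of distinct monomials it is linearly independent in $\PK$, and combined with the direct sum decomposition this yields that the images $\{[b]:b\in\BB\}$ form a basis of $\PK/(F)$. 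By definition this makes $F$ a border basis of $I=(F)$ with respect to $\BB$, completing all four claims.
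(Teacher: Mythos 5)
Your proof is correct and takes essentially the same route as the paper: both deduce the graded direct sums $\PK_t = \Span{\BB}_t \oplus \SpanD{F}{t}$ from Theorem~\ref{thmnfdegd} and use the ``pass to a large enough degree $t'$'' trick to close the gap between $(F)_t$ and $\SpanD{F}{t}$. The only cosmetic difference is the order of steps: the paper first establishes $\Span{\BB}\cap(F)=\{0\}$ and then reads off $(F)_t=\SpanD{F}{t}$ from the degree-$t$ decomposition, whereas you first prove $(F)_t=\SpanD{F}{t}$ via compatibility of the graded decompositions and then derive the global direct sum from it; both orderings are logically equivalent.
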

\begin{proof}
By Theorem \ref{thmnfdegd}, 
$\PK_{t}= \Span{\BB}_{t} \oplus \SpanD{F}{t}$ for all $t\in \NN$. This  implies that
$\PK= \Span{\BB}\oplus (F)$ and thus $\BB$ is a basis of  $\PK/(F)$.
Let us prove that $(F)_{t}=\SpanD{F}{t}$ for all $t \in\NN$.
Obviously, $\SpanD{F}{t}\subset (F)_t$. Conversely let $p\in (F)_t$. Then
$p=r+q$, where $r\in \Span{\BB}_t$ and $q\in \SpanD{F}{t}$. Thus $p-q\in (F)\cap\Span{\BB}=\{0\}$, i.e., $p=q\in \SpanD{F}{t}$. 
\ignore{
which shows that $\BB$ is a generating set of $\PK/(F)$.
If there is a non zero polynomial $p\in \BB\cap (F)$, then $p\in
\SpanD{F}{t_{0}}$ for some $t_{0}\in \NN$. It is in contradiction with the
previous relation. 
Thus, we have $\PK=\Span{\BB} \oplus (F)$ and as any polynomial of degree $t$
is reduced by $F$ to an element of degree $\le t$ in $\Span{\BB}$, we also
have 
\[
\PK_{t}= \Span{\BB}_{t} \oplus \SpanD{F}{t} = \Span{\BB}_{t} \oplus (F)_{t}.
\]
As $\SpanD{F}{t}\subset (F)_{t}$, we deduce that the two spaces
supplementary to $\Span{\BB}_{t}$ are equal: $\SpanD{F}{t}=(F)_{t}$.
}
\end{proof} 

This implies the following characterization of border bases using 
the commutation property.
\begin{corollary}[border basis, \cite{m-99-nf}]\label{thmcomanyt} 
Let $\BB\subset \Mon$ be connected to 1 and let $F$ be a rewriting family for $\BB$, complete in any degree.
If for all $m\in \BB$ and all indices $i,j\in[1,n]$, we have:
\begin{equation*}
   \pi_{F,\BB}(x_i\,\pi_{F,\BB}(x_j\,m))=\pi_{F,\BB}(x_j\,\pi_{F,\BB}(x_i\,m)),
\end{equation*}
then $\BB$ is a basis of $\PK/(F)$, $\PK=\Span{\BB} \oplus (F)$, and
$(F)_{t}=\SpanD{F}{t}$ for all $t  \in \oN$.
\end{corollary}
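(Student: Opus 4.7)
The strategy is to reduce Corollary \ref{thmcomanyt} to Theorem \ref{thmnfanyt}. The only gap is that the hypothesis here, namely the commutation relation \eqref{eqcom} for $m \in \BB$, is weaker than the assumption $\piFB(c) = 0$ for all $c \in C^+(F)$ used in Theorem \ref{thmnfanyt}. I would therefore focus on upgrading the hypothesis to the full commutation \eqref{eqcom} for every $m \in \Mon_{t-2}$ and every $t \in \oN$. Once this is done, Theorem \ref{thmcom} applied in each degree $t$ yields $\PK_t = \Span{\BB}_t \oplus \SpanD{F}{t}$, and the passage to $\PK = \Span{\BB} \oplus (F)$, together with the identification $(F)_t = \SpanD{F}{t}$, follows verbatim from the argument given in Theorem \ref{thmnfanyt}.

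To carry out the upgrade, I proceed by induction on $t$. The cases $t \le 1$ are vacuous. For the inductive step, linearity of $\piFB$ immediately promotes the hypothesis to
\begin{equation*}
\piFB(x_i \piFB(x_j b)) = \piFB(x_j \piFB(x_i b)) \quad \text{for all } b \in \Span{\BB}_{t-2},
\end{equation*}
which is precisely relation \eqref{eqonB}. The case $m \in \BB_{t-2}$ is then covered by the hypothesis. For $m \in \Mon_{t-2}\setminus\BB_{t-2}$, I would mimic the reduction carried out in the second half of the proof of Theorem \ref{thmnfdegd}: the recursive Definition \ref{defpiBF} gives $\piFB(x_i m) = \piFB(x_{i'}\piFB(m'))$ with $x_i m = x_{i'} m'$, and in the nontrivial case $i \neq i'$ one factors $m = x_{i'} m''$ and $m' = x_i m''$ with $m'' \in \Mon_{t-3}$. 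The induction hypothesis, applied to $m''$ with indices $i$ and $i'$, then produces the key identity $\piFB(x_i m) = \piFB(x_i \piFB(m))$, while the case $i = i'$ is immediate from the definition.

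With this identity available for every $i$ and every $m \in \Mon_{t-2}$, the commutation \eqref{eqcom} at $m$ follows from the three-step chain
\begin{equation*}
\piFB(x_i \piFB(x_j m)) = \piFB(x_i \piFB(x_j \piFB(m))) = \piFB(x_j \piFB(x_i \piFB(m))) = \piFB(x_j \piFB(x_i m)),
\end{equation*}
where the outer equalities use the identity just established and the middle one applies \eqref{eqonB} to $b = \piFB(m) \in \Span{\BB}_{t-2}$. The main obstacle is the case $m \notin \BB$; however, since it is handled by the same decomposition step as in the proof of Theorem \ref{thmnfdegd}, and since here the commutation on $\Span{\BB}$ is pure hypothesis rather than something to be derived from $C^+(F)$, the argument is actually shorter than the corresponding passage in that theorem and introduces no genuinely new difficulty.
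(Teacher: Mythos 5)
Your proposal is correct and is precisely the argument the paper intends: the published proof is the one-liner \emph{``Same proof as for Theorem \ref{thmnfanyt}, using Theorem \ref{thmcom}''}, which leaves implicit exactly the upgrade you supply from commutation on $\BB$ to the commutation on all of $\Mon_{t-2}$ that Theorem \ref{thmcom} requires. Your induction on degree, with \eqref{eqclaim} as the intermediate step and the closing three-step chain through \eqref{eqonB}, faithfully reproduces the machinery embedded in the second half of the proof of Theorem \ref{thmnfdegd}, and you are right that it is slightly shorter here since the $\BB$-commutation is now a hypothesis rather than something to be derived from $C^{+}(F)$.
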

\begin{proof}
Same proof as for Theorem \ref{thmnfanyt}, using Theorem \ref{thmcom}.\end{proof}
\ignore{
We check that for all $m\in \BB$,
$\pi_{F,\BB}(x_i\,\pi_{F,\BB}(x_j\,m))-\pi_{F,\BB}(x_j\,\pi_{F,\BB}(x_i\,m))$
implies that for all $ f, f' \in F$ such that $C(f,f') \in \Span{\BB^{+}}$
$\pi_{F,\BB}(C(f,f')) = 0$. 
By theorem \ref{thmnfanyt}, we deduce that $F$ is a border basis of $(F)$ for $\BB$.
\end{proof}
}
 

\section{Hankel Operators}\label{sec:3}
In this section, we analyse the properties of Hankel operators and related
moment matrices, that we will need hereafter, for the moment matrix
approach.
\subsection{Linear forms on the polynomial ring }

The set of $\oK$-linear forms from $\PK$ to $\oK$ is denoted by $\PK^{\ast}:=\tmop{Hom}_{\oK} (\PK,
\oK)$ and
called the dual space of $\PK$. A typical element of $\PK^{\ast}$ is the
evaluation at a point $\zeta \in \oK^n$:
\begin{eqnarray*}
  \tmmathbf{1}_{\zeta} & : & p \in \PK \mapsto p (\zeta) \in \oK.
\end{eqnarray*}
Such evaluation can be composed with differentiation. Namely, for
$\alpha\in\oN^n$, the differential functional:
\begin{eqnarray*}
\mathbf{1}_{\zeta}\cdot \partial^\alpha & :  & p \in \PK \mapsto
\left(\frac{\partial^{|\alpha|}}{\partial x_1^{\alpha_1} \ldots \partial x_n^{\alpha_n}}p\right)(\zeta)
\end{eqnarray*}
evaluates at $\zeta$ the derivative $\partial^\alpha$ of $p$. For
$\alpha=0$, $\un_{\zeta}\cdot \partial^0= \tmmathbf{1}_{\zeta}$. 
The dual basis of the monomial basis $(\xx^\alpha)_{\alpha \in \NN^{n}}$ of $\PK$ is denoted
$(\dd^\alpha)_{\alpha \in \NN^{n}}$; we have $\dd^\alpha(\xx^\beta) =\delta_{\alpha,\beta}$.
In characteristic $0$, $\dd^\alpha:= \un_{0} \cdot \frac{1}{\prod_{i=1}^n \alpha_i!} \partial^\alpha$.
Any element $\Lambda \in \PK^{\ast}$ can be written as $ \Lambda = \sum_{\alpha} \Lambda
(\xx^{\alpha})\dd^{\alpha}$. In particular,
$\tmmathbf{1}_{\zeta}=\sum_{\alpha \in \NN^{n}} \zeta^\alpha \dd^\alpha$.

For $S\subset \PK$, we define
\[ 
S^{\bot} := \{\Lambda \in \PK^{*} \mid \forall p \in S\ \Lambda(p)=0 \}.
\]

\subsection{Hankel operators}
The dual space $\PK^{\ast}$ has a natural structure of $\PK$-module
which is defined as follows: $(p,\Lambda)\in \PK\times \PK^{\ast}\mapsto
p\cdot\Lambda\in \PK^{\ast}$, where
\begin{eqnarray*}
  p \cdot \Lambda & : & q \in \PK \mapsto \Lambda (p q) \in \oK.
\end{eqnarray*}
Note that, for any $\alpha,\beta\in\oN^n$, we have
\begin{eqnarray*}
\xx^\beta \cdot \dd^\alpha & 
= & \dd^{\alpha-\beta}  \ \tmop{if}  \alpha \ge \beta,\\
  & = & 0 \ \ \ \ \ \ \ \tmop{otherwise}.
\end{eqnarray*}
\begin{definition}
For $\Lambda \in \PK^{\ast}$, the Hankel operator $H_{\Lambda}$ is the
operator from $\PK$ to $\PK^{\ast}$ defined by
\begin{eqnarray*}
  H_{\Lambda} & : & p \in \PK \mapsto p \cdot \Lambda \in \PK^{\ast} .
\end{eqnarray*}
\end{definition}

\begin{lemma}
For $\Lambda \in \PK^{\ast}$, the matrix of the Hankel operator
$H_\Lambda$ with respect to the bases $(\xx^\alpha)$ of $\PK$ and
$(\dd^\beta)$ of $\PK^{\ast}$ is
$[H_{\Lambda}]=(\Lambda(\xx^{\alpha+\beta}))$.
\end{lemma}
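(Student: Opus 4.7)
The plan is to unwind the definitions of the Hankel operator, the module action on $\PK^{\ast}$, and the dual basis $(\dd^{\beta})$, then read off the matrix entries by evaluating the image of each basis monomial on each basis monomial. Since the statement is essentially a bookkeeping identity, the ``hard part'' is only making sure the indexing conventions are consistent; no real obstacle arises.

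First, I would fix a monomial $\xx^{\alpha}$ and compute its image $H_{\Lambda}(\xx^{\alpha})=\xx^{\alpha}\cdot\Lambda\in\PK^{\ast}$. To write this element in the dual basis, I invoke the general expansion recalled just before the lemma: any linear form $\Lambda'\in\PK^{\ast}$ equals $\sum_{\beta}\Lambda'(\xx^{\beta})\,\dd^{\beta}$. Applying this to $\Lambda'=\xx^{\alpha}\cdot\Lambda$ gives
\[
H_{\Lambda}(\xx^{\alpha})\;=\;\sum_{\beta}\bigl((\xx^{\alpha}\cdot\Lambda)(\xx^{\beta})\bigr)\,\dd^{\beta}.
\]
By the definition of the $\PK$-module structure on $\PK^{\ast}$, namely $(p\cdot\Lambda)(q)=\Lambda(pq)$, the scalar coefficient simplifies to $(\xx^{\alpha}\cdot\Lambda)(\xx^{\beta})=\Lambda(\xx^{\alpha}\xx^{\beta})=\Lambda(\xx^{\alpha+\beta})$.

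Therefore the $\beta$-coordinate of $H_{\Lambda}(\xx^{\alpha})$ in the basis $(\dd^{\beta})$ is $\Lambda(\xx^{\alpha+\beta})$. By the standard convention that the $\alpha$-th column of the matrix of a linear map collects the coordinates of the image of the $\alpha$-th basis vector, this means $[H_{\Lambda}]_{\beta,\alpha}=\Lambda(\xx^{\alpha+\beta})$, which is exactly the claimed formula. Symmetry in $\alpha$ and $\beta$ then explains the name ``Hankel'': the entry depends only on the sum $\alpha+\beta$.
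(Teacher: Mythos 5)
Your proof is correct and takes essentially the same direct route as the paper's, just in the opposite order: you apply $\xx^\alpha\cdot$ to $\Lambda$ first and then expand the result in the dual basis, while the paper expands $\Lambda=\sum_\gamma\Lambda(\xx^\gamma)\dd^\gamma$ first and then applies the explicit shift rule $\xx^\alpha\cdot\dd^\gamma=\dd^{\gamma-\alpha}$. Both are the same elementary bookkeeping and your version is, if anything, slightly cleaner since it only uses $(p\cdot\Lambda)(q)=\Lambda(pq)$ rather than the shift formula for dual basis elements.
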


\begin{proof}
Writing $\Lambda=\sum_\gamma \Lambda(\xx^\gamma) \dd^\gamma$, we
have: \[H_\Lambda(\xx^\alpha)= \xx^\alpha\cdot \Lambda
= \sum_\gamma \Lambda(\xx^\gamma)\ \xx^\alpha \cdot  \dd^\gamma
= \sum_{\gamma\mid \gamma \ge \alpha} \Lambda(\xx^\gamma) \dd^{\gamma-\alpha}
= \sum_\beta \Lambda(\xx^{\alpha+\beta}) \dd^\beta.
\]
\end{proof}

\ignore{
\begin{remark}\label{rem::bilinearform}
Given $\Lambda \in \PK^{\ast}$, consider  the quadratic form
$Q_{\Lambda}$ defined  on $\PK$ by
\begin{eqnarray*}
  Q_{\Lambda} & : & (p, q) \in \PK^2 \mapsto \Lambda (p q) \in \oK.
\end{eqnarray*}
The matrix of $Q_{\Lambda}$ in the monomial basis $(\xx^{\alpha})$
of $\PK$ is $[Q_{\Lambda}] = (\Lambda (\xx^{\alpha + \beta}))$.
Moreover, 
$Q_{\Lambda} (p, q) = \Lambda(p q)=  H_{\Lambda} (p) (q) = H_{\Lambda} (q) (p)$ for all
$p,q\in \PK$. In the
following, we will also identify $H_{\Lambda}$ and $Q_{\Lambda}$.
\end{remark}
}

\medskip
We  now summarize some well known properties of the kernel
\[\Ker H_\Lambda=\{p\in \PK\mid p\cdot \Lambda =0,\ \text{ i.e., } \Lambda(pq)=0\ \forall q\in \PK\}.\]
of the Hankel operator $H_\Lambda$.
Recall the definition of a Gorenstein algebra \cite{DiEm05:cox}, \cite[Chap. 8]{em-07-irsea}.
\begin{definition}
An algebra $\AAA$ is called {\em Gorenstein} if $\AAA$ and its dual
space $\AAA^\ast$ are isomorphic $\AAA$-modules.
\end{definition}

Applying this definition to $\AAA:=\PK/\Ker H_\Lambda$ yields
\begin{lemma} \label{lem::ideal}
$\Ker H_\Lambda$ is an ideal in $\PK$ and the  quotient space
$\AAA:=\PK/\Ker H_\Lambda$ is a Gorenstein algebra.
\end{lemma}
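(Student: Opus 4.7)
The plan is to establish the two assertions in turn, both of which reduce to exploiting the fact that $H_\Lambda$ is a $\PK$-module morphism when $\PK^*$ is equipped with its natural $\PK$-module structure (as just defined above the lemma). For the first assertion, $\Ker H_\Lambda$ is a linear subspace since $H_\Lambda$ is linear, and for any $p\in\Ker H_\Lambda$ and $r\in\PK$ one has, for every $q\in\PK$,
\[
H_\Lambda(rp)(q)=\Lambda(rpq)=H_\Lambda(p)(rq)=0,
\]
so $rp\in\Ker H_\Lambda$. Equivalently, $(rp)\cdot\Lambda=r\cdot(p\cdot\Lambda)=0$, and $\Ker H_\Lambda$ is closed under multiplication by $\PK$, hence an ideal.

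For the Gorenstein property, the first observation is that taking $q=1$ in the defining relation of the kernel shows that $\Lambda$ already vanishes on $\Ker H_\Lambda$, so $\Lambda$ descends to a linear form $\bar\Lambda\in\AAA^*$. The plan is to build an explicit $\AAA$-module isomorphism
\[
\phi:\AAA\longrightarrow\AAA^*,\qquad \phi([p])([q]):=\Lambda(pq),
\]
which is well-defined precisely because $\Ker H_\Lambda$ is an ideal (so the value depends only on the classes $[p],[q]$). A direct computation
\[
\phi([r][p])([q])=\Lambda(rpq)=\phi([p])([rq])=(r\cdot\phi([p]))([q])
\]
shows $\phi$ is $\AAA$-linear. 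Injectivity is immediate: if $\phi([p])=0$ then $\Lambda(pq)=0$ for all $q\in\PK$, so $p\in\Ker H_\Lambda$ and $[p]=0$.

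The main obstacle is surjectivity of $\phi$, i.e., showing that every element of $\AAA^*$ is of the form $[q]\mapsto\Lambda(pq)$ for some $p\in\PK$, or equivalently that $\PK\cdot\Lambda=(\Ker H_\Lambda)^\perp$. In the finite-dimensional case $\dim_\oK\AAA<\infty$, which is the setting of interest in this paper (where $\AAA$ encodes the roots of the polynomial system), surjectivity is automatic from injectivity together with $\dim\AAA=\dim\AAA^*$, and one concludes that $\phi$ is the desired $\AAA$-module isomorphism $\AAA\cong\AAA^*$. The general statement can be read as an expression of the non-degeneracy, after passing to the quotient, of the symmetric bilinear form $(p,q)\mapsto\Lambda(pq)$ on $\PK$, whose left and right radicals both coincide with $\Ker H_\Lambda$.
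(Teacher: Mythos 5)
Your proof is correct and takes the same route as the paper, whose proof simply reads ``direct verification, using $H_\Lambda$ as isomorphism'' without spelling out any details. You have also rightly flagged surjectivity of the induced map $\phi:\AAA\to\AAA^*$ as the one step that genuinely needs the finite-rank hypothesis which is implicit throughout the paper (with the naive full dual $\AAA^*$, the statement would fail for infinite-dimensional $\AAA$).
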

\begin{proof}
Direct verification, using $H_\Lambda$ as isomorphism in the proof
of the second part of the lemma.
\end{proof}

The focus of this paper is the computation of zero-dimensional
varieties, which relates to finite rank Hankel operators as shown in
the following lemma.
\begin{lemma}
  \label{lem:basis}
The rank of the operator $H_{\Lambda}$ is finite if and
only if $\Ker H_\Lambda$ is a zero-dimensional ideal, in which case
$\dim \PK/\Ker H_\Lambda=\rank H_{\Lambda}$.
\end{lemma}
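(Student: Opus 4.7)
The plan is to reduce both statements to things already established in the excerpt. The key observation is that $H_\Lambda:\PK\to\PK^{\ast}$ is a $\oK$-linear map whose kernel is, by Lemma \ref{lem::ideal}, an ideal of $\PK$. So I can factor $H_\Lambda$ through the quotient $\PK/\Ker H_\Lambda$ to obtain an injective $\oK$-linear map $\overline{H_\Lambda}:\PK/\Ker H_\Lambda\to \PK^{\ast}$ with image $\operatorname{Im} H_\Lambda$. This gives the identity $\rank H_\Lambda = \dim_\oK(\PK/\Ker H_\Lambda)$ purely from the first isomorphism theorem for vector spaces, and crucially it makes sense whether or not either side is finite.

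With that identity in hand, the equivalence reduces to: $\dim_\oK(\PK/\Ker H_\Lambda)<\infty$ if and only if $\Ker H_\Lambda$ is zero-dimensional. This is exactly Theorem \ref{theodim} applied to the ideal $I=\Ker H_\Lambda$ (whose existence as an ideal is again supplied by Lemma \ref{lem::ideal}). Since $|V_{\clK}(I)|<\infty$ is the definition of $I$ being zero-dimensional, Theorem \ref{theodim} immediately gives the desired biconditional, and the dimension formula $\dim \PK/\Ker H_\Lambda=\rank H_\Lambda$ is then the statement from step one, automatically valid in the finite-rank case.

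There is essentially no hard step: the statement is a bookkeeping combination of Lemma \ref{lem::ideal} (kernel is an ideal), the first isomorphism theorem (rank equals codimension of kernel), and Theorem \ref{theodim} (finite codimension characterizes zero-dimensional ideals). The only thing to be a little careful about is to formulate the factorization of $H_\Lambda$ through the quotient without assuming finite-dimensionality a priori, so that the identity $\rank H_\Lambda=\dim(\PK/\Ker H_\Lambda)$ is genuinely available in both directions of the equivalence. Writing $\overline{H_\Lambda}([p]):=p\cdot\Lambda$ and checking that this is well-defined (because $\Ker H_\Lambda$ is exactly the set of $p$ with $p\cdot\Lambda=0$) and injective handles this cleanly.
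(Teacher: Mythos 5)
Your proof is correct and follows essentially the same route as the paper: the core identity $\rank H_\Lambda = \dim(\PK/\Ker H_\Lambda)$, which the paper phrases as ``$H_\Lambda(p_1),\ldots,H_\Lambda(p_r)$ are linearly independent in $\PK^\ast$ iff $[p_1],\ldots,[p_r]$ are linearly independent in $\PK/\Ker H_\Lambda$'' and you phrase as injectivity of the induced map $\overline{H_\Lambda}$ on the quotient, after which Theorem~\ref{theodim} (applied to $I=\Ker H_\Lambda$, legitimate since Lemma~\ref{lem::ideal} guarantees it is an ideal) converts finite codimension into zero-dimensionality. You simply spell out explicitly the two ingredients that the paper's one-line proof leaves implicit.
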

\begin{proof}
Directly from the fact that, given $p_1,\ldots,p_r\in\PK$,
$H_\Lambda(p_1),\ldots,H_\Lambda(p_r)$ are linearly independent in 
$\PK^\ast$ if and only if 
the cosets $[p_1],\ldots,[p_r]$ are linearly independent in
$\PK/\Ker H_\Lambda$.
\ignore{
We have $\dim (\PK/\Ker H_\Lambda) = \rank H_\Lambda =r$, thus
$\Ker H_\Lambda$ is a zero-dimensional ideal.
}
\end{proof}
The next theorem states a fundamental result in commutative algebra,
namely that all zero-dimensional polynomial ideals can be characterized using
differential operators (see \cite[Chap. 7]{em-07-irsea}, \cite[Thm. 2.2.7]{DiEm05:cox}). For the
special case of zero-dimensional Gorenstein ideals, a single
differential form is enough to characterize the ideal.

\begin{theorem}\label{thm::idealC}
Let $\oK=\oC$ and  assume $\rank H_{\Lambda} = r < \infty$. Then there
exist  $\zeta_1, \ldots, \zeta_d \in \oC^n$ (with $d\le r$) and
non-zero (differential) polynomials $p_1,\ldots,p_d\in \oC[\partial]$,
of the form  $p_i (\partial)= 
\sum_{\alpha\in A_i}a_{i,\alpha}\partial^\alpha$ where $A_i\subset \oN^n$
is finite and $a_{i,\alpha}\in\oK$, such that
  \begin{equation}
\Lambda=\sum_{i=1}^d  \un_{\zeta_{i}} \cdot p_{i}(\partial).
    \label{eq:lambda}
  \end{equation}
\end{theorem}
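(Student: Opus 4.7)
The plan is to derive the representation from the primary decomposition of the quotient algebra $\AAA := \PC/\Ker H_\Lambda$ together with Macaulay's description of the dual of a local Artinian $\oC$-algebra. First, I would apply Lemma \ref{lem:basis}: since $\rank H_\Lambda = r < \infty$, the ideal $\Ker H_\Lambda$ is zero-dimensional and $\dim \AAA = r$. Writing $V(\Ker H_\Lambda) = \{\zeta_1,\ldots,\zeta_d\} \subset \oC^n$, Theorem \ref{theodim} yields $d \le r$.

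Second, I would invoke the structure theorem for zero-dimensional algebras over an algebraically closed field (Chinese Remainder / primary decomposition) to obtain a canonical isomorphism $\AAA \cong \bigoplus_{i=1}^d \AAA_i$, where each $\AAA_i$ is a local Artinian $\oC$-algebra whose residue field is $\oC$ and whose maximal ideal corresponds to the point $\zeta_i$. Dualizing this decomposition gives $\AAA^* \cong \bigoplus_{i=1}^d \AAA_i^*$. Now observe that $\Lambda$ vanishes on $\Ker H_\Lambda$ (taking $q \in \Ker H_\Lambda$ we have $\Lambda(q) = H_\Lambda(q)(1) = 0$), so $\Lambda$ factors through $\AAA$ and thus splits as $\Lambda = \sum_{i=1}^d \Lambda_i$ with $\Lambda_i \in \AAA_i^*$ (each $\Lambda_i$ is the restriction of $\Lambda$ to the polynomials that are non-vanishing only at $\zeta_i$ modulo $\Ker H_\Lambda$).

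Third, the crux is to represent each local component $\Lambda_i$ as $\mathbf{1}_{\zeta_i}\cdot p_i(\partial)$. Since $\AAA_i$ is a local Artinian quotient of $\PC$ at $\zeta_i$, there exists $n_i$ with $\mathfrak{m}_{\zeta_i}^{n_i} \subseteq \Ker H_\Lambda$ locally, and the dual $\AAA_i^*$ is finite-dimensional. Macaulay's inverse systems theorem (see \cite[Chap.~7]{em-07-irsea}, \cite[Thm.~2.2.7]{DiEm05:cox}) identifies the dual of such a local algebra at $\zeta_i$ with a finite-dimensional $\oC$-subspace of $\oC[\partial]$ acted on by $\mathbf{1}_{\zeta_i}$, explicitly spanned by the functionals $\mathbf{1}_{\zeta_i}\cdot\partial^\alpha$ for $\alpha$ ranging over a finite set. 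Consequently $\Lambda_i = \mathbf{1}_{\zeta_i}\cdot p_i(\partial)$ for some nonzero $p_i(\partial) = \sum_{\alpha\in A_i} a_{i,\alpha}\partial^\alpha$ with $A_i \subset \oN^n$ finite and $a_{i,\alpha}\in\oC$. Summing over $i$ yields the required expression~\eqref{eq:lambda}.

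The main obstacle is the final step, namely invoking the Macaulay duality description of $\AAA_i^*$ in terms of differential functionals supported at $\zeta_i$; everything else is standard zero-dimensional algebra. Since this is a well-documented result available in the cited references, the proof can legitimately cite it rather than rederive the inverse system construction. A minor subtlety to check is that each $\Lambda_i$ is nonzero for exactly those $\zeta_i$ that actually appear; points with $\Lambda_i = 0$ can simply be dropped, which only decreases $d$ and preserves $d \le r$.
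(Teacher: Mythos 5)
The paper does not actually prove Theorem~\ref{thm::idealC}; the surrounding text simply cites it as a known result, pointing to \cite[Chap.~7]{em-07-irsea} and \cite[Thm.~2.2.7]{DiEm05:cox}. Your proposal is a correct reconstruction of the standard argument behind that cited result --- reduce to the zero-dimensional case via Lemma~\ref{lem:basis}, observe $\Lambda$ kills $\Ker H_\Lambda$ so it factors through $\AAA$, decompose $\AAA$ into local Artinian components by the Chinese Remainder Theorem, dualize, and invoke Macaulay's inverse-systems description of the dual of each local factor as differential functionals supported at the corresponding point --- which is essentially the route taken in those references. Your handling of the edge cases (dropping points $\zeta_i$ with $\Lambda_i=0$, and $d\le r$ from Theorem~\ref{theodim}) is also sound.
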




For a zero-dimensional ideal $I\subset \PK$ with simple zeros 
$V(I)= \{\zeta_1,\ldots,\zeta_r\} \subset \oK^n$ only, we have
$I^{\bot} = \Span{ \tmmathbf{1}_{\zeta_1},\ldots,\tmmathbf{1}_{\zeta_r}}$ and the ideal $I$
is radical as a consequence of Hilbert's Nullstellensatz.

In a similar way, we can now characterize the linear forms $\Lambda$
for which $\Ker H_\Lambda$ is a radical ideal.

\begin{proposition}\label{propradideal}
Let $\oK=\oC$ and assume that $\rank
H_{\Lambda} = r < \infty$. Then, the ideal $\Ker H_\Lambda$ is radical
if and only if
\begin{equation}\label{eqlambdarad}
\Lambda=\sum_{i=1}^r \lambda_i\tmmathbf{1}_{ \zeta_i} \ \text{ with }
\lambda_i\in \oK-\{0\} \ \text{ and } \zeta_i\in \oK^n \text{ pairwise
distinct},
\end{equation}
in which case $\Ker H_\Lambda=I(\zeta_1,\ldots,\zeta_r)$ is the
vanishing ideal of the $\zeta_i$'s.
\end{proposition}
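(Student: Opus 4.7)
My plan is to prove the two directions separately, relying on the description of $(\Ker H_\Lambda)^\perp$ through evaluation forms, together with Hilbert's Nullstellensatz and Theorem \ref{theodim}.

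For the sufficiency direction, assume $\Lambda=\sum_{i=1}^{r}\lambda_i\un_{\zeta_i}$ with distinct $\zeta_i$ and nonzero $\lambda_i$. For any $p\in\PK$, the condition $p\in\Ker H_\Lambda$ reads $\Lambda(pq)=\sum_i\lambda_i\,p(\zeta_i)\,q(\zeta_i)=0$ for all $q\in\PK$. Using Lagrange interpolation on the finite set $\{\zeta_1,\ldots,\zeta_r\}\subset\oC^n$, I can produce, for each $j$, a polynomial $q_j$ with $q_j(\zeta_i)=\delta_{ij}$; substituting $q=q_j$ and using $\lambda_j\ne 0$ forces $p(\zeta_j)=0$. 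The reverse inclusion $I(\{\zeta_1,\ldots,\zeta_r\})\subseteq\Ker H_\Lambda$ is obvious. Hence $\Ker H_\Lambda=I(\{\zeta_1,\ldots,\zeta_r\})$, which is radical by the Nullstellensatz. Compatibility with the rank hypothesis follows from Theorem \ref{theodim}, which gives $\dim\PK/\Ker H_\Lambda=r$ for the radical ideal associated with $r$ distinct points.

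For the necessity direction, assume $\Ker H_\Lambda$ is radical and $\rank H_\Lambda=r<\infty$. By Lemma \ref{lem::ideal} and Lemma \ref{lem:basis}, $\Ker H_\Lambda$ is a zero-dimensional ideal of $\PK$ with quotient dimension $r$. Theorem \ref{theodim} applied to a radical zero-dimensional ideal gives $|V(\Ker H_\Lambda)|=r$, and Hilbert's Nullstellensatz yields $\Ker H_\Lambda=I(V(\Ker H_\Lambda))$. Writing $V(\Ker H_\Lambda)=\{\zeta_1,\ldots,\zeta_r\}$ with the $\zeta_i$ pairwise distinct, I then consider the orthogonal $(\Ker H_\Lambda)^\perp\subset\PK^{\ast}$, which has dimension $r$. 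Each evaluation form $\un_{\zeta_i}$ belongs to $(\Ker H_\Lambda)^\perp$ since $\un_{\zeta_i}$ vanishes on $I(\{\zeta_1,\ldots,\zeta_r\})$, and the forms $\un_{\zeta_1},\ldots,\un_{\zeta_r}$ are linearly independent in $\PK^{\ast}$ (again by Lagrange interpolation). Thus they form a basis of $(\Ker H_\Lambda)^\perp$. Since $\Lambda$ itself lies in $(\Ker H_\Lambda)^\perp$, I obtain a unique expansion $\Lambda=\sum_{i=1}^{r}\lambda_i\un_{\zeta_i}$.

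The last item is to show each $\lambda_i\neq 0$. The argument I would use is a contradiction: if some $\lambda_i=0$, then $\Lambda=\sum_{j\neq i}\lambda_j\un_{\zeta_j}$, and the sufficiency direction applied to the smaller set $\{\zeta_j:j\neq i\}$ would give $\rank H_\Lambda = r-1$, contradicting the hypothesis. The main subtlety is really bookkeeping: ensuring that the passage from $\Lambda\in(\Ker H_\Lambda)^\perp$ to the decomposition $\sum\lambda_i\un_{\zeta_i}$ is the unique one (which is exactly the basis argument above), and that the derived identity $\Ker H_\Lambda=I(\{\zeta_1,\ldots,\zeta_r\})$ stated in the conclusion is precisely what the Nullstellensatz gives after identifying the $\zeta_i$'s as the zeros of $\Ker H_\Lambda$.
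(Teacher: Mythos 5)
Your proof is correct and takes essentially the same approach as the paper: both directions rely on Lagrange interpolation polynomials at the finitely many zeros $\zeta_i$, Hilbert's Nullstellensatz, and the relation $\rank H_\Lambda = \dim\PK/\Ker H_\Lambda$. The only cosmetic difference is that you phrase the decomposition of $\Lambda$ in the dual picture, taking $\un_{\zeta_1},\ldots,\un_{\zeta_r}$ as a basis of $(\Ker H_\Lambda)^\perp$, whereas the paper works with the interpolation polynomials $p_i$ as a basis of $\PK/\Ker H_\Lambda$ and observes that $\Lambda$ and $\sum_i \Lambda(p_i)\un_{\zeta_i}$ agree on that basis; these are dual formulations of the same argument.
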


\begin{proof}
Assume first that $\Ker H_\Lambda$ is radical with
$V(\Ker H_\Lambda):=\{\zeta_1,\ldots,\zeta_r\}\subset \oK^n$. This
implies $\Ker H_\Lambda=I(V(\Ker H_\Lambda))= I(\zeta_1,\ldots,\zeta_r)$. Let $p_i\in\oC[\xx]$ be interpolation
polynomials at the points $\zeta_i$, i.e., $p_i(\zeta_j)=\delta_{i,j}$ for
$i,j\le r$. Then the set $\{p_1,\ldots,p_r\}$ is linearly
independent in $\AAA:=\PK/(\Ker H_\Lambda)$ and thus is a basis of $\AAA$. As the linear
functionals $\Lambda$ and $\sum_{i=1}^r\Lambda(p_i)
\tmmathbf{1}_{\zeta_i}$ take the same values at each $p_i$, we obtain:
$\Lambda=\sum_{i=1}^r \Lambda(p_i) \tmmathbf{1}_{\zeta_i}$. Moreover,
$\lambda_{i}:=\Lambda(p_i)\ne 0$, since $\rank H_\Lambda = r$.

Conversely assume that $\Lambda$ is as in (\ref{eqlambdarad}). The
inclusion $I(\zeta_1,\ldots,\zeta_r)\subset \Ker H_\Lambda$ is
obvious. Consider now $p\in \Ker H_\Lambda$ and as before let
$p_i\in \PK$ be interpolation polynomials at the $\zeta_i$'s.
Then $0=\Lambda(p\,p_i)= \lambda_i p(\zeta_i)$ implies $p(\zeta_i)=0$, thus
showing $p\in I(\zeta_1,\ldots,\zeta_r)$. As $\Ker
H_\Lambda=I(\zeta_1,\ldots,\zeta_r)$ is the vanishing ideal of a set of $r$
points, it is radical by the Hilbert Nullstellensatz.
\end{proof}

In a similar way, we can also characterize real radical ideals using
Hankel operators.

\begin{proposition}\label{proprealradical}
Let $\oK = \oR$ and assume that $\rank H_\Lambda =r<\infty$.
Then, the ideal $\Ker H_\Lambda$ is real radical if and only
if
\begin{equation}\label{eqlambdarealrad}
\Lambda=\sum_{i=1}^r \lambda_i\tmmathbf{1}_{\zeta_i} \ \text{ with } \lambda_{i}
\in \oR-\{0\}\  \text{and}\ \zeta_i
\in \oR^n \text{ pairwise distinct}.
\end{equation}
\end{proposition}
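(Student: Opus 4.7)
The plan is to mirror closely the proof of Proposition \ref{propradideal} (the complex analog), exploiting the fact that over $\oR$ a real radical ideal has two extra features: it is already radical, and its complex variety consists only of real points. These two facts are exactly what reduces the real case to the complex argument carried out with real data.

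For the ``only if'' direction, I would start by recalling that if $\Ker H_\Lambda \subseteq \PR$ is real radical then, as noted in the excerpt just after the definition, it is also radical and $V(\Ker H_\Lambda)=V_\oR(\Ker H_\Lambda)\subseteq \oR^n$. By Lemma \ref{lem:basis}, $\dim \PR/\Ker H_\Lambda = \rank H_\Lambda = r$, and by Theorem \ref{theodim} applied to the radical ideal $\Ker H_\Lambda$ one gets $|V_\oR(\Ker H_\Lambda)|=r$; write its elements as $\zeta_1,\ldots,\zeta_r\in \oR^n$, pairwise distinct. Since these are real points, one can pick real interpolation polynomials $p_i\in \PR$ satisfying $p_i(\zeta_j)=\delta_{i,j}$; modulo $\Ker H_\Lambda = I(\zeta_1,\ldots,\zeta_r)$ they form a basis of $\PR/\Ker H_\Lambda$. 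Exactly as in the proof of Proposition \ref{propradideal}, the linear form $\Lambda$ and $\sum_{i=1}^r \Lambda(p_i)\mathbf{1}_{\zeta_i}$ agree on this basis, hence on all of $\PR$; setting $\lambda_i:=\Lambda(p_i)\in \oR$ yields the claimed expression, and $\lambda_i\neq 0$ follows from $\rank H_\Lambda = r$.

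For the ``if'' direction, assume $\Lambda=\sum_{i=1}^r\lambda_i\mathbf{1}_{\zeta_i}$ with distinct $\zeta_i\in \oR^n$ and nonzero $\lambda_i\in\oR$. Repeating verbatim the ``conversely'' part of the proof of Proposition \ref{propradideal} (now using real interpolation polynomials, which exist since the $\zeta_i$ are real) shows that $\Ker H_\Lambda = I(\zeta_1,\ldots,\zeta_r)$, the vanishing ideal in $\PR$ of the real set $\{\zeta_1,\ldots,\zeta_r\}\subseteq \oR^n$. Because $V_\oR(I(\zeta_1,\ldots,\zeta_r))=\{\zeta_1,\ldots,\zeta_r\}$, the Real Nullstellensatz gives $\sqrt[\oR]{\Ker H_\Lambda}=I(V_\oR(\Ker H_\Lambda))=\Ker H_\Lambda$, so $\Ker H_\Lambda$ is real radical.

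The main obstacle is a conceptual one rather than a technical one: we must justify that the complex-case argument transports to the real setting without loss. The two steps where this needs checking are (a) that interpolation polynomials can be chosen in $\PR$, which is fine precisely because the points $\zeta_i$ are real, and (b) that the decomposition of $\Lambda$ involves only real coefficients $\lambda_i$, which is automatic since $\Lambda$ takes real values on real polynomials. Everything else is a direct transcription of the complex proof.
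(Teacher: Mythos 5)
Your argument is correct, and the forward implication is essentially the paper's: real-radicality gives radicality together with $V(\Ker H_\Lambda)\subset\oR^n$, and then the decomposition argument from Proposition~\ref{propradideal}, run with real interpolation polynomials, produces real coefficients $\lambda_i=\Lambda(p_i)\neq 0$. In the converse direction you take a slightly different route: you first establish $\Ker H_\Lambda = I(\zeta_1,\ldots,\zeta_r)$ (repeating the complex-case argument with real data) and then invoke the Real Nullstellensatz to conclude that this vanishing ideal of real points is real radical. The paper instead performs a direct, self-contained check of the defining property: if $\sum_j q_j^2\in\Ker H_\Lambda$, pairing against an interpolation polynomial $p_i$ gives $\lambda_i\sum_j q_j(\zeta_i)^2=0$, hence $q_j(\zeta_i)=0$ for all $i,j$ and so $q_j\in\Ker H_\Lambda$ (and then the same for the $p^{2m}$ term). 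Both routes are valid; yours leans on the Real Nullstellensatz, which is a heavier tool, whereas the paper's verification is more elementary and does not even need to identify $\Ker H_\Lambda$ explicitly as a vanishing ideal. One could argue your approach is conceptually cleaner since it makes the identification $\Ker H_\Lambda = I(\zeta_1,\ldots,\zeta_r)$ explicit, which is implicit anyway in the paper's final step.
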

\begin{proof}
If $\Ker H_\Lambda$ is real radical then $V(\Ker
H_\Lambda)=\{\zeta_1,\ldots,\zeta_r\}\subset \oR^n$, so that
\eqref{eqlambdarad} gives \eqref{eqlambdarealrad}. 
Conversely, if $\Lambda$ is as in (\ref{eqlambdarealrad}),  then
$\Ker H_\Lambda$ is real radical, since $\sum_j q_j^2 \in
\Ker H_\Lambda$ implies $\sum_jq_j(\zeta_i)^2=0$ and thus
$q_j(\zeta_i)=0$, giving $q_j\in \Ker H_\Lambda$.
\end{proof}

Let us now recall a direct way to compute the radical of the ideal $\Ker H_\Lambda$. 
First, consider  the quadratic form
$Q_{\Lambda}$ defined  on $\PK$ by
\begin{eqnarray}\label{rem::bilinearform}
  Q_{\Lambda} & : & (p, q) \in \PK^2 \mapsto \Lambda (p q) \in \oK.
\end{eqnarray}
Then, 
$Q_{\Lambda} (p, q) = \Lambda(p q)=  H_{\Lambda} (p) (q) = H_{\Lambda} (q) (p)$ for all
$p,q\in \PK$, and the 
matrix of $Q_{\Lambda}$ in the monomial basis $(\xx^{\alpha})$
 is $[Q_{\Lambda}] = (\Lambda (\xx^{\alpha + \beta}))$.
We saw in Lemma \ref{lem::ideal} that the algebra $\AAA=\PK/\Ker H_\Lambda$ is Gorenstein. An alternative characterisation of Gorenstein algebras states that 
the above quadratic form $Q_\Lambda$ defines a non-degenerate inner product on $\AAA$
(see eg. \cite{em-07-irsea}[chap. 9]). Assume now that $\rank H_\Lambda=r<\infty$ so that $\dim \AAA=r$.
Let $b_{1},\ldots, b_{r}$ be a basis of $\AAA$ and let
$d_{1},\ldots, d_{r}$ be its dual basis in $\AAA$  for
$Q_{\Lambda}$: it satisfies $\Lambda( b_{i} \, d_{j})= \delta_{i,j}$ for $i,j\in [1,r]$.
Then, for any element $a\in \AAA$, we have
\begin{equation} \label{eqdualbasis}
a =\sum_{i=1}^{r} \Lambda(a\, d_{i}) b_{i}.
\end{equation}
In particular, we have the following property:
\begin{proposition}\label{prop:trace} Let $\Delta :=\sum_{i=1}^{r} b_{i}\, d_{i}$. Given
 $h\in \AAA$, let $\XX_h$ be the corresponding multiplication operator
in $\AAA$. We have
$$ 
\Trace (\XX_{h})=\Lambda( h \Delta).
$$
\end{proposition}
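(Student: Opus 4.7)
The plan is to compute the trace of $\XX_h$ by writing its matrix in the basis $b_1,\ldots,b_r$ and then extracting the diagonal entries via the dual basis $d_1,\ldots,d_r$.

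First, I would apply the dual basis expansion formula \eqref{eqdualbasis} to the element $\XX_h(b_j)=h\,b_j\in \AAA$. This gives
\[
h\,b_j \;=\; \sum_{i=1}^{r} \Lambda(h\,b_j\, d_i)\, b_i
\qquad \text{for each } j\in[1,r].
\]
Hence the matrix of $\XX_h$ in the basis $(b_1,\ldots,b_r)$ has $(i,j)$-entry equal to $\Lambda(h\,b_j\, d_i)$, and in particular the $j$-th diagonal entry is $\Lambda(h\, b_j\, d_j)$.

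Next, I would sum over $j$ and use $\oK$-linearity of $\Lambda$:
\[
\Trace(\XX_h) \;=\; \sum_{j=1}^{r} \Lambda(h\, b_j\, d_j)
\;=\; \Lambda\!\left(h \sum_{j=1}^{r} b_j\, d_j\right)
\;=\; \Lambda(h\,\Delta).
\]
This yields the claimed identity.

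There is essentially no obstacle here once the dual basis relation \eqref{eqdualbasis} is in hand: the only subtle point is to check that $\Delta=\sum_j b_j d_j$ is a well-defined element of $\AAA$ (it is, as a sum of products of classes in $\AAA$), and that the expression $\Lambda(h\Delta)$ makes sense because $\Lambda$ factors through $\AAA=\PK/\Ker H_\Lambda$ (since $\Ker H_\Lambda\subseteq \Ker\Lambda$, as $\Lambda=1\cdot \Lambda= H_\Lambda(1)$). Both are immediate from the setup, so the proof is essentially the two-line computation above.
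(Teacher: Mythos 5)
Your proof is correct and follows exactly the paper's argument: apply the dual basis expansion \eqref{eqdualbasis} to $h b_j$ to read off the matrix of $\XX_h$ as $(\Lambda(h\,b_j\,d_i))_{i,j}$, then sum the diagonal and use linearity of $\Lambda$. The extra remark confirming that $\Delta\in\AAA$ and $\Lambda(h\Delta)$ are well defined is a sensible sanity check that the paper leaves implicit.
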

\begin{proof} 
By relation \eqref{eqdualbasis}, the matrix of $\XX_{h}$ in the basis
$(b_{i})_{i\le i\le r}$ of $\AAA$ is $(\Lambda(h \, b_{j}\,
d_{i}))_{1\le i,j \le  r}$ and thus its trace is
$$ 
\Trace (\XX_{h})= \sum_{i=1}^{r} \Lambda( h \, b_{i}\, d_{i}) =\Lambda( h \Delta).
$$
\end{proof}
 
As a direct consequence we deduce the following result (see e.g., \cite{JFSMR08}):
\begin{theorem}\label{prop:radideal}
Let $\oK=\oC$ and assume $\tmop{rank} H_{\Lambda} = r < \infty$.
Let $b_{1},\ldots, b_{r}$ be a basis of $\AAA_{\Lambda}$, 
$d_{1},\ldots, d_{r}$ be its dual basis with respect to the inner product given by  $Q_{\Lambda}$, and  $\Delta
=\sum_{i=1}^{r} b_{i}\, d_{i}$. 
Then the radical of $\Ker H_\Lambda$ is  $\Ker H_{\Delta\cdot\Lambda}$.
\end{theorem}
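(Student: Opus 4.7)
The plan is to prove the theorem by identifying $\Delta\cdot\Lambda$ with a positively weighted sum of evaluations at the points of $V(\Ker H_\Lambda)$, and then invoking Proposition~\ref{propradideal} to read off its kernel as the vanishing ideal of those points, which coincides with $\sqrt{\Ker H_\Lambda}$ by Hilbert's Nullstellensatz.

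First, since $\rank H_\Lambda = r<\infty$, Lemma~\ref{lem:basis} guarantees that $I:=\Ker H_\Lambda$ is zero-dimensional with $\dim_\oC \AAA_\Lambda = r$, where $\AAA_\Lambda = \PC/I$. Write $V(I)=\{\zeta_1,\ldots,\zeta_d\}$ with $d\le r$. By Hilbert's Nullstellensatz, $\sqrt{I}=I(\zeta_1,\ldots,\zeta_d)$. Hence it suffices to prove
$$
\Ker H_{\Delta\cdot\Lambda} \,=\, I(\zeta_1,\ldots,\zeta_d).
$$

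Next, I would compute $\Delta\cdot\Lambda$ explicitly. For any $h\in\PC$, Proposition~\ref{prop:trace} applied to $[h]\in\AAA_\Lambda$ gives
$$
(\Delta\cdot\Lambda)(h) \,=\, \Lambda(h\Delta) \,=\, \Trace(\XX_h).
$$
To evaluate this trace, decompose $\AAA_\Lambda$ via the Chinese Remainder Theorem: writing the primary decomposition $I=\bigcap_{i=1}^{d}Q_i$ with $Q_i$ being $\mathfrak{m}_{\zeta_i}$-primary, one obtains an isomorphism $\AAA_\Lambda\cong\bigoplus_{i=1}^d\AAA_i$, where each $\AAA_i:=\PC/Q_i$ is a local ring of dimension $m_i:=\dim_\oC\AAA_i\ge 1$, with $\sum_{i=1}^d m_i=r$. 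The multiplication operator $\XX_h$ respects this decomposition, inducing an operator $\XX_h^{(i)}$ on each $\AAA_i$. Since the maximal ideal of $\AAA_i$ is nilpotent, the class of $x_j-\zeta_{i,j}$ is nilpotent in $\AAA_i$, hence so is $[h-h(\zeta_i)]$, so $\XX_h^{(i)}$ admits $h(\zeta_i)$ as its unique eigenvalue and $\Trace(\XX_h^{(i)})=m_i\,h(\zeta_i)$. Summing over $i$ and comparing with the formula above yields
$$
\Delta\cdot\Lambda \,=\, \sum_{i=1}^d m_i\,\un_{\zeta_i},
$$
a linear combination of evaluations at pairwise distinct points with positive integer weights.

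Finally, this is precisely the form~\eqref{eqlambdarad} featured in Proposition~\ref{propradideal}. The converse direction of that proposition (which, as its proof shows, applies verbatim to any representation of a linear form as a nonzero combination of distinct evaluations) identifies $\Ker H_{\Delta\cdot\Lambda}$ with $I(\zeta_1,\ldots,\zeta_d)=\sqrt{I}$, concluding the proof. The main obstacle is the trace computation: it rests on the primary decomposition of the zero-dimensional ring $\AAA_\Lambda$ together with the nilpotence of $\XX_{h-h(\zeta_i)}$ on each local component. If one wishes to bypass primary decomposition, an alternative is to start from the representation $\Lambda=\sum_{i=1}^d\un_{\zeta_i}\cdot p_i(\partial)$ given by Theorem~\ref{thm::idealC} and derive the trace formula directly from the structure of $\AAA_\Lambda$ as a finite-dimensional $\oC[\partial]$-module concentrated at the $\zeta_i$.
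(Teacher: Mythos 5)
Your proof is correct, but it takes a genuinely different route from the paper's. The paper argues entirely on the algebra side: it characterizes $h\in\sqrt{I}$ by the nilpotence of $\XX_h$, then invokes the classical fact (via Cayley--Hamilton and Newton's identities in characteristic $0$) that $\XX_h$ is nilpotent if and only if $\Trace(\XX_{ha})=0$ for all $a\in\AAA_\Lambda$, and finally uses Proposition~\ref{prop:trace} to identify this trace condition with $\Lambda(\Delta\,ha)=0$ for all $a$, i.e.\ with $h\in\Ker H_{\Delta\cdot\Lambda}$. You instead work on the dual side: you compute the functional $\Delta\cdot\Lambda$ itself, showing via the CRT decomposition $\AAA_\Lambda\cong\bigoplus_i\AAA_i$ and the nilpotence of $[h-h(\zeta_i)]$ in each local factor that $\Delta\cdot\Lambda=\sum_{i=1}^d m_i\,\un_{\zeta_i}$ with positive integer weights, and then you read off $\Ker H_{\Delta\cdot\Lambda}=I(\zeta_1,\ldots,\zeta_d)=\sqrt{I}$ from Proposition~\ref{propradideal}. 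Your extra caveat that the converse direction of Proposition~\ref{propradideal} applies to \emph{any} nonzero combination of distinct evaluations (without assuming the number of terms equals the rank) is needed and correct, since $\rank H_{\Delta\cdot\Lambda}=d$ may be strictly smaller than $r$. The trade-off: the paper's proof is shorter and avoids primary decomposition altogether, while yours is more explicit and delivers additional information, namely the closed-form $\Delta\cdot\Lambda=\sum_i m_i\,\un_{\zeta_i}$ where $m_i$ is the multiplicity of $\zeta_i$ -- in effect re-deriving the identity between the Hermite/trace form and a weighted sum of point evaluations.
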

\begin{proof}
Let $I:=\Ker H_\Lambda$. 
 A polynomial  $h$ is in $\sqrt{I}$ if and only if some  power of $h$ is in $I$ or, equivalently,
if and only if  $\XX_{h}$ is nilpotent. By a classical algebraic
property, the latter is equivalent to
$\Trace(\XX_{h}\XX_{a})=0=\Trace(\XX_{h\, a})$ for all $a \in  \AAA$.
Indeed, as the operators $\XX_{h}, \XX_{a}$ commute, if $\XX_{h}$ is nilpotent
then so is $\XX_{h}\XX_{a}$ and we have $\Trace( \XX_{h} \XX_{a})=0$.
Conversely if $\Trace(\XX_{h}
\XX_{a})=0$ for all $a \in \AAA$ then, by Cayley-Hamilton identity, the
characteristic polynomial $\det(\lambda I - \XX_{h})$ of $\XX_{h}$ is
$\lambda^r$ and thus $\XX_{h}$ is nilpotent.
By Proposition \ref{prop:trace}, we deduce that $h \in \sqrt{I}$ if and only if
$\Lambda(\Delta\, h a)=0$ for all $a \in \AAA_{\Lambda}$, that is, if and only if  $h\in
\Ker H_{\Delta\cdot\Lambda}$. 
\end{proof}

\subsection{Positive linear forms}

We now assume that $\oK=\oR$ and consider the polynomial ring $\PR$.
We first show that the kernel of a Hankel operator $H_\Lambda$ is  a
real radical ideal when $\Lambda\in \PR^{\ast}$ is positive. This
result is crucial in the algorithm that computes the real radical of
an ideal.

\begin{definition}
  We say that $\Lambda \in \PR^{\ast}$ is positive, which we denote
  $\Lambda \succcurlyeq 0$, if $\Lambda (p^2) \geqslant 0$ for all
  $p\in \PR$. Equivalently, we will say $H_{\Lambda} \succcurlyeq 0$
  if $\Lambda\succcurlyeq 0$.
\end{definition}

We will use the following simple observation.

\begin{lemma}\label{lemp2}
Assume  $\Lambda\succcurlyeq 0$.
For $p\in \PR$,
 $\Lambda (p^2) = 0$ implies $p \in \Ker H_{\Lambda} $ and thus
 $\Lambda (p) = 0$.
For $\Lambda,\Lambda'\succcurlyeq 0$,
$\Ker H_{\Lambda+\Lambda'}=\Ker H_\Lambda \cap \Ker H_{\Lambda'}$. 
\end{lemma}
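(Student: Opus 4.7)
The plan is to handle the two statements in succession, with the first (a Cauchy--Schwarz style argument for positive semi-definite symmetric bilinear forms) providing the key tool that makes the second essentially automatic.

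For the first statement, I would view $Q_{\Lambda}(p,q):=\Lambda(pq)$ (as defined in \eqref{rem::bilinearform}) as a symmetric positive semi-definite bilinear form on $\PR$, since by hypothesis $Q_{\Lambda}(p,p)=\Lambda(p^{2})\ge 0$ for every $p$. For such a form, the Cauchy--Schwarz inequality $Q_{\Lambda}(p,q)^{2}\le Q_{\Lambda}(p,p)\,Q_{\Lambda}(q,q)$ holds, by the usual trick of expanding $\Lambda\big((p+tq)^{2}\big)\ge 0$ as a nonnegative quadratic in $t\in\oR$ and looking at the discriminant; no positive-definiteness is needed. Applying this with $Q_{\Lambda}(p,p)=\Lambda(p^{2})=0$ forces $\Lambda(pq)=0$ for every $q\in\PR$, which is exactly the statement that $p\cdot\Lambda=0$, i.e.\ $p\in\Ker H_{\Lambda}$. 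Specializing $q=1$ yields $\Lambda(p)=0$.

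For the second statement, the inclusion $\Ker H_{\Lambda}\cap\Ker H_{\Lambda'}\subseteq\Ker H_{\Lambda+\Lambda'}$ is immediate from linearity: if both $p\cdot\Lambda$ and $p\cdot\Lambda'$ vanish as linear forms, so does $p\cdot(\Lambda+\Lambda')$. For the converse, suppose $p\in\Ker H_{\Lambda+\Lambda'}$. Then in particular $(\Lambda+\Lambda')(p^{2})=0$, which combined with $\Lambda(p^{2})\ge 0$ and $\Lambda'(p^{2})\ge 0$ (both $\Lambda,\Lambda'$ being positive) gives $\Lambda(p^{2})=\Lambda'(p^{2})=0$. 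Applying the first part of the lemma to each of $\Lambda$ and $\Lambda'$ then yields $p\in\Ker H_{\Lambda}$ and $p\in\Ker H_{\Lambda'}$, completing the reverse inclusion.

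There is essentially no obstacle here beyond being careful that Cauchy--Schwarz only needs positive semi-definiteness, not definiteness, and that the second part really does reduce to the first through the single test polynomial $q=p$ applied to $\Lambda+\Lambda'$. Everything else is straightforward linearity and the non-negativity of $\Lambda(p^{2}),\Lambda'(p^{2})$.
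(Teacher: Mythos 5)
Your proposal is correct and follows essentially the same route as the paper: both proofs reduce the first claim to the nonnegativity of the quadratic $t\mapsto\Lambda\bigl((p+tq)^2\bigr)$, with the paper letting $t\to 0$ after dividing by $t$ and you packaging the same expansion as the Cauchy--Schwarz inequality; the second claim is then handled identically, by applying the first part to $(\Lambda+\Lambda')(p^2)=0$.
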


\begin{proof}
For any $q\in \PR$, $t\in\oR$, $\Lambda((p+tq)^2) = t^2 \Lambda(q^2) +
2t \Lambda(p\,q) \ge 0$. Dividing by $t$ and letting $t$ go to zero
yields $\Lambda (p\,q)=0$, thus showing $p\in\Ker H_\Lambda$.
The inclusion $\Ker H_\Lambda \cap \Ker H_{\Lambda'} \subset \Ker H_{\Lambda+\Lambda'}$ is obvious.
Conversely, let $p\in \Ker H_{\Lambda+\Lambda'}$. In particular,
 $(\Lambda+\Lambda')(p^2)=0$, which implies $\Lambda(p^2)=\Lambda'(p^2)=0$ (since 
$\Lambda(p^2),\Lambda'(p^2)\ge 0$) and thus $p\in \Ker H_\Lambda \cap \Ker H_{\Lambda'}$.
\end{proof}

\begin{proposition}
  \label{prop:radreal}
If  $\Lambda \succcurlyeq 0$, then $\Ker H_{\Lambda}$ is a
real radical ideal.
\end{proposition}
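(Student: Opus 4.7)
The strategy is to verify the defining condition of real radicality directly from the excerpt: one must show that whenever $p^{2m}+\sum_j q_j^2 \in \Ker H_\Lambda$ for some $m \geq 1$ and $q_j \in \PR$, then $p \in \Ker H_\Lambda$. The two ingredients I would use are (a) $\Lambda \succcurlyeq 0$ together with Lemma \ref{lemp2}, and (b) the fact that $\Ker H_\Lambda$ is an ideal (Lemma \ref{lem::ideal}).

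First, I would reduce from a sum of squares to a single square. Assume $f := p^{2m} + \sum_j q_j^2 \in \Ker H_\Lambda$. Applying $f \cdot \Lambda = 0$ to the constant polynomial $1$ gives $\Lambda(p^{2m}) + \sum_j \Lambda(q_j^2) = 0$. Since $\Lambda \succcurlyeq 0$, every term on the left-hand side is nonnegative, so each must vanish. In particular, $\Lambda((p^m)^2) = \Lambda(p^{2m}) = 0$, and Lemma \ref{lemp2} then yields $p^m \in \Ker H_\Lambda$.

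Second, I would pass from $p^m \in \Ker H_\Lambda$ down to $p \in \Ker H_\Lambda$ by an induction/minimality argument on the exponent. Let $k$ be the smallest positive integer with $p^k \in \Ker H_\Lambda$; if $k=1$ we are done, otherwise set $k' := \lceil k/2 \rceil$, so that $1 \leq k' < k$ and $2k' \geq k$. Then
\[
\Lambda\bigl((p^{k'})^2\bigr) = \Lambda(p^{2k'}) = \Lambda(p^{k} \cdot p^{2k' - k}) = 0,
\]
using that $p^k \in \Ker H_\Lambda$ and $\Ker H_\Lambda$ is an ideal. Applying Lemma \ref{lemp2} once more yields $p^{k'} \in \Ker H_\Lambda$, contradicting the minimality of $k$.

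The potential obstacle is making sure the descent step is clean: one has to invoke both that $\Ker H_\Lambda$ is an ideal (so that $p^k$ absorbs the extra factor $p^{2k'-k}$) and that positivity lets us conclude from vanishing on a square. Once these are in place, the argument is short and self-contained, and no additional machinery (such as the explicit form \eqref{eqlambdarealrad}) is needed.
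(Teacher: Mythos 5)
Your proof is correct, and it rests on the same two ingredients as the paper's: positivity together with Lemma \ref{lemp2} to pass from a vanishing sum of squares to vanishing of each summand, and the ideal property from Lemma \ref{lem::ideal}. The paper's proof is actually terser than yours: it only shows that $\sum_i p_i^2 \in \Ker H_\Lambda$ forces each $p_i \in \Ker H_\Lambda$ (by testing against all $q\in\PR$ and using the weaker conclusion $\Lambda(p_iq)=0$ of Lemma \ref{lemp2}), and tacitly relies on the standard fact that such ``real'' ideals are automatically radical, hence real radical. You instead test only against $q=1$, use the stronger conclusion of Lemma \ref{lemp2} ($\Lambda(p^2)=0 \Rightarrow p\in\Ker H_\Lambda$) to land directly in the kernel, and then carry out the exponent descent from $p^m$ to $p$ explicitly via a minimal-counterexample argument. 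Your version is therefore more self-contained relative to the paper's own definition of $\sqrt[\oR]{I}$, since it does not leave the reduction from ``$p^m\in I$'' to ``$p\in I$'' implicit; the paper's version is a touch shorter by leaning on that well-known equivalence.
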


\begin{proof}
Assume $\sum_i p_i^2\in \Ker H_\Lambda$; we show that
$p_i\in\Ker H_\Lambda$. Indeed, $(\sum_ip_i^2)\cdot \Lambda=0$
implies, for all $q\in \PR$, $0=\Lambda(\sum_ip_i^2q^2)
=\sum_i\Lambda(p_i^2q^2)$ and thus $\Lambda(p_i^2q^2)=0$. By Lemma
\ref{lemp2}, this in turn implies $\Lambda(p_iq)=0$ and thus $p_i\in
\Ker H_\Lambda$.
\end{proof}

We saw in Proposition {\ref{proprealradical}} that the kernel of
 a finite rank Hankel operator $H_\Lambda$ is real radical if and only if
$\Lambda$ is a linear combination of evaluations at real points. We
next observe that $\Lambda$ is positive precisely when $\Lambda$ is
a {\em conic} combination of evaluations at real points.

\begin{proposition}
  \label{proppositive}
Assume $\tmop{rank} H_{\Lambda} = r < \infty$. Then $\Lambda
\succcurlyeq 0$ if and only if $\Lambda$ has a decomposition
(\ref{eqlambdarealrad}) with $\lambda_i>0$ and distinct $\zeta_i\in
\oR^n$,
in which case $V(\Ker H_{\Lambda}) =\{\zeta_1, \ldots, \zeta_r
\}\subset \oR^n$.
\end{proposition}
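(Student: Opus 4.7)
The plan is to reduce the proposition to the two previous characterisations (Propositions \ref{proprealradical} and \ref{prop:radreal}) and then pin down the sign of the coefficients $\lambda_i$ by testing $\Lambda$ against interpolation polynomials.

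For the easy direction ($\Leftarrow$), I would assume that $\Lambda=\sum_{i=1}^r\lambda_i\un_{\zeta_i}$ with $\lambda_i>0$ and $\zeta_i\in\oR^n$. Then, for any $p\in\PR$,
\[
\Lambda(p^2)=\sum_{i=1}^r\lambda_i\,p(\zeta_i)^2\ge 0,
\]
so $\Lambda\succcurlyeq 0$. The equality $V(\Ker H_\Lambda)=\{\zeta_1,\dots,\zeta_r\}$ then follows directly from Proposition \ref{proprealradical}.

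For the harder direction ($\Rightarrow$), my plan is as follows. Assume $\Lambda\succcurlyeq 0$. By Proposition \ref{prop:radreal}, $\Ker H_\Lambda$ is real radical. Since $\rank H_\Lambda=r<\infty$, we have $\dim \PR/\Ker H_\Lambda=r$ by Lemma \ref{lem:basis}, and then Proposition \ref{proprealradical} yields a decomposition
\[
\Lambda=\sum_{i=1}^r\lambda_i\un_{\zeta_i},\qquad \lambda_i\in\oR\setminus\{0\},\ \zeta_i\in\oR^n\text{ pairwise distinct}.
\]
It only remains to upgrade $\lambda_i\neq 0$ to $\lambda_i>0$. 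For each $i$, choose a real interpolation polynomial $p_i\in\PR$ with $p_i(\zeta_j)=\delta_{i,j}$ (possible since the $\zeta_i$'s are distinct points of $\oR^n$). Then
\[
\Lambda(p_i^{\,2})=\sum_{j=1}^r\lambda_j\,p_i(\zeta_j)^2=\lambda_i,
\]
and positivity of $\Lambda$ gives $\lambda_i=\Lambda(p_i^{\,2})\ge 0$; combined with $\lambda_i\neq 0$ this forces $\lambda_i>0$.

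There is no real obstacle here; the main subtlety is merely that one must invoke the real radicality conclusion of Proposition \ref{prop:radreal} in order to access the structural decomposition of Proposition \ref{proprealradical}, and then use real interpolation polynomials (rather than arbitrary complex ones) so that the test values $\Lambda(p_i^{\,2})$ make sense as squares of real polynomials and can be compared to zero. The statement $V(\Ker H_\Lambda)\subset\oR^n$ is then immediate from the real decomposition of $\Lambda$ together with Proposition \ref{proprealradical}.
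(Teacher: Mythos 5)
Your proof is correct and follows essentially the same path as the paper: invoke Proposition~\ref{prop:radreal} to get real radicality, then Proposition~\ref{proprealradical} for the decomposition with $\lambda_i\neq 0$ and real $\zeta_i$, then test against interpolation polynomials to force $\lambda_i>0$. The only cosmetic difference is that you compute $\lambda_i=\Lambda(p_i^2)$ directly from the decomposition, whereas the paper reaches the same conclusion via the identity $p_i^2-p_i\in\Ker H_\Lambda$ (so $\Lambda(p_i)=\Lambda(p_i^2)$); both are the same idea.
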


\begin{proof}
If $\Lambda=\sum_{i=1}^r\lambda_i \tmmathbf{1}_{\zeta_i}$ with
$\lambda_i>0$ and $\zeta_i\in\oR^n$, then $\Lambda \succcurlyeq 0$
holds obviously. Conversely, assume that $\Lambda \succcurlyeq 0$ then
by Proposition \ref{prop:radreal} the ideal $\Ker H_\Lambda$ is
real radical. By Proposition \ref{proprealradical},
$\Lambda$ has a decomposition (\ref{eqlambdarealrad}) where
$\lambda_i=\Lambda(p_i)\ne 0$, $\zeta_i\in\oR^n$, and $p_i$ are
interpolation polynomials at the $\zeta_i$'s. As $p_i^2-p_i\in
I(\zeta_1,\ldots,\zeta_r)=\Ker H_\Lambda$, we have
$\Lambda(p_i)=\Lambda(p_i^2)\ge 0$, which concludes the proof.
\end{proof}

To motivate the next section, let us recall Lemma~\ref{lem:basis}
and observe how it specializes to truncated Hankel operators defined
on subspaces of $\PK$:
\begin{lemma} Let $\BB=\{b_{1},\ldots, b_{r}\} \subset \PR$ and $\Lambda \in \PK^{*}$.
The operator
\begin{align*}
H_{\Lambda}^\BB: \Span \BB &\rightarrow \Span{\BB}^\ast \\
               p = \sum_{i=1}^r  \lambda_i b_i & \mapsto  p \cdot \Lambda
\end{align*}
has a trivial kernel if and only if  the cosets $[b_1], \ldots, [b_r] \in \PK/\Ker H_{\Lambda}$
are linearly independent in $\PK/\Ker H_\Lambda$.
\end{lemma}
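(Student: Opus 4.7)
The plan is to adapt the proof of Lemma \ref{lem:basis} to the truncated setting by directly unpacking the definition of $\ker H_{\Lambda}^{\BB}$ and relating it to the quotient $\PK/\Ker H_{\Lambda}$. The argument should closely mirror the one for the non-truncated version, with the space $\Span\BB$ playing the role of $\PK$.

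First, I would translate the kernel condition. For $p = \sum_{i=1}^{r} \lambda_{i} b_{i} \in \Span\BB$, the condition $p \in \ker H_{\Lambda}^{\BB}$ reads $p \cdot \Lambda = 0$; interpreted in the same sense as for the full Hankel operator, this means $p \in \Ker H_{\Lambda}$. Thus I would establish the identification
\[
\ker H_{\Lambda}^{\BB} \;=\; \Span\BB \cap \Ker H_{\Lambda}.
\]

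Second, I would exploit the fact that $b_{1}, \ldots, b_{r}$ is a basis of $\Span\BB$ so that every $p \in \Span\BB$ has a unique expression $p = \sum_{i} \lambda_{i} b_{i}$. Under this unique decomposition, $p \in \Ker H_{\Lambda}$ is equivalent to $\sum_{i} \lambda_{i} [b_{i}] = 0$ in $\PK/\Ker H_{\Lambda}$. Combining this with the previous step, $\ker H_{\Lambda}^{\BB} = \{0\}$ is equivalent to: the only scalars $(\lambda_{i})$ with $\sum_{i} \lambda_{i}[b_{i}] = 0$ are $\lambda_{1} = \cdots = \lambda_{r} = 0$, which is exactly the linear independence of the cosets $[b_{1}], \ldots, [b_{r}]$ in $\PK/\Ker H_{\Lambda}$.

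The main conceptual step, and the only place that requires care, is the identification $\ker H_{\Lambda}^{\BB} = \Span\BB \cap \Ker H_{\Lambda}$: one must interpret the codomain $\Span\BB^{\ast}$ in a manner consistent with the statement, since a priori $p \cdot \Lambda$ is naturally an element of $\PK^{\ast}$. Once this identification is in place, the rest of the argument is a direct specialization of Lemma \ref{lem:basis} and requires no further input.
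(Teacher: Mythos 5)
Your proof is correct and takes essentially the same route as the paper: the paper's entire argument is the single identity $\Ker H_{\Lambda}^{\BB}=\Ker H_{\Lambda}\cap\Span{\BB}$, which is precisely the identification you isolate, and the remaining step (unique coordinates in the basis $\BB$ translate kernel elements into linear dependencies among the cosets $[b_i]$) is the same elementary verification. You also correctly flag the one point the paper glosses over, namely that for this identity to hold one must read $p\cdot\Lambda$ as an element of $\PK^{\ast}$ rather than as its restriction to $\Span{\BB}$.
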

\begin{proof}
Direct verification using the fact that $\Ker H_{\Lambda}^\BB=\Ker H_{\Lambda}
\cap \Span{\BB}$.
\ignore{
Let $\{[b_1], \ldots, [b_r]\}$ be linear independent cosets in
$\PK/\Ker H_\Lambda$ and let $q = \sum_{j=1}^r\lambda_jb_j\in \Ker H_{\Lambda}^\BB$. Then, for
all $i=1,\ldots,r$, $\Lambda(b_i\,(\sum_{j=1}^r \lambda_j b_j))=0$.
This implies $\Lambda((\sum_{j=1}^r\lambda_jb_j)\,p)=0$ for all $p\in
\PK$ and thus $\sum_{j=1}^r\lambda_jb_j\in \Ker H_\Lambda$, implying
$\lambda_i=0$ for all $i=\{1,\ldots,r\}$. Conversely assume that
there are $\lambda_i \in \oK$ not all $\lambda_i = 0$ for which
$\sum_{j=1}^r\lambda_jb_j\in \Ker H_\Lambda$. That implies $\{[b_1],
\ldots, [b_r]\}$ not linearly independent in $\PR/\Ker H_\Lambda$.
}
\end{proof}

{Assume now  that $\Ker H_\Lambda$ is zero-dimensional and that $\BB
= \{b_1,\ldots,b_r \}\subset \PK$ is chosen so that $[b_1],\ldots,[b_r]$ form a basis of
$\AAA = \PR / \Ker H_\Lambda$.
As in relation (\ref{rem::bilinearform}), we consider the quadratic form
$Q^{\BB}_{\Lambda}$ on $\AAA$ defined by
\begin{eqnarray*}
  Q^{\BB}_{\Lambda} & : & (p, q) \in \AAA \times \AAA  \mapsto \Lambda (p q) \in \oK.
\end{eqnarray*}
Note that a matrix representation of this form can be obtained by
taking the principle submatrix of $[H_\Lambda]$ indexed by $\BB$.
Following \cite{Lau07}, we recall under which conditions the
bilinear form $Q^\BB_{\Lambda}$ relates to the Hermite form
\begin{align*}
T_{h}: \AAA \times \AAA &\rightarrow  \oK\\
(f, g) &\mapsto \Trace(\XX_{fgh})
\end{align*}
for some $h \in \AAA$.

\begin{lemma}
The quadratic form associated to $Q^{\BB}_\Lambda$ coincides with
the Hermite form $T_h$ for some $h \in \AAA$ if and only if $\Ker H_\Lambda$
is radical.
\end{lemma}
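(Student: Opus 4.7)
The plan is to reduce the condition $Q^{\BB}_{\Lambda}=T_{h}$ to a question about the invertibility in $\AAA$ of the ``trace dual'' element $\Delta=\sum_{i=1}^{r}b_{i}d_{i}$ introduced in Proposition \ref{prop:trace}, and then invoke Theorem \ref{prop:radideal} to tie invertibility of $\Delta$ to the radicality of $\Ker H_{\Lambda}$. The main obstacle is organizing the equivalence cleanly so that the non-degeneracy of $Q_{\Lambda}$ on $\AAA$ and the fact that, in a finite-dimensional commutative algebra, \emph{invertible} = \emph{non-zero-divisor}, do the heavy lifting.

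First, I would use Proposition \ref{prop:trace} to rewrite the Hermite form on $\AAA$: for any $h\in\AAA$,
\begin{equation*}
T_{h}(f,g)=\Trace(\XX_{fgh})=\Lambda(fgh\,\Delta).
\end{equation*}
Hence $T_{h}=Q^{\BB}_{\Lambda}$ on $\AAA\times\AAA$ is equivalent to $\Lambda\bigl(fg(h\Delta-1)\bigr)=0$ for all $f,g\in\AAA$. Since $\Lambda$ defines a non-degenerate inner product on $\AAA=\PR/\Ker H_{\Lambda}$ (this is precisely the Gorenstein property used after Lemma \ref{lem::ideal}), the vanishing above forces $h\Delta=1$ in $\AAA$. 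Thus the existence of $h\in\AAA$ with $T_{h}=Q^{\BB}_{\Lambda}$ is equivalent to $\Delta$ being invertible in $\AAA$.

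Next I would translate ``$\Delta$ invertible in $\AAA$'' into radicality. Because $\AAA$ is a finite-dimensional commutative algebra, an element is invertible if and only if it is not a zero-divisor. By non-degeneracy of $Q_{\Lambda}$, an element $[p]\in\AAA$ satisfies $[p]\Delta=0$ if and only if $\Lambda(\Delta p q)=0$ for all $q\in\PR$, i.e.\ $p\in\Ker H_{\Delta\cdot\Lambda}$. Therefore $\Delta$ is a non-zero-divisor in $\AAA$ if and only if $\Ker H_{\Delta\cdot\Lambda}\subseteq\Ker H_{\Lambda}$; the reverse inclusion is immediate from the definitions. By Theorem \ref{prop:radideal}, $\Ker H_{\Delta\cdot\Lambda}=\sqrt{\Ker H_{\Lambda}}$, so this is exactly the statement that $\Ker H_{\Lambda}$ is radical.

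Chaining the two equivalences yields the claim. As a sanity check in the radical direction, if $\Ker H_{\Lambda}$ is radical then by Proposition \ref{propradideal} we have $\Lambda=\sum_{i=1}^{r}\lambda_{i}\mathbf{1}_{\zeta_{i}}$ with distinct $\zeta_{i}$ and $\lambda_{i}\neq 0$; taking $h\in\AAA$ to be (the class of) the interpolation polynomial with $h(\zeta_{i})=\lambda_{i}$, both bilinear forms become $(f,g)\mapsto\sum_{i}\lambda_{i}f(\zeta_{i})g(\zeta_{i})$, which confirms the construction and matches the general argument via $h\Delta=1$.
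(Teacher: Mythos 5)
Your proof is correct and self-contained. The paper itself does not prove this lemma; it simply cites \cite[Sec.\ 2.2]{Lau07}. What you have done is reconstruct the argument entirely from the paper's own machinery: Proposition \ref{prop:trace} to express $T_h(f,g)=\Lambda(fgh\,\Delta)$, the Gorenstein non-degeneracy of $Q_\Lambda$ on $\AAA$ to reduce $T_h=Q^{\BB}_\Lambda$ to $h\Delta=1$ in $\AAA$, the elementary fact that in a finite-dimensional commutative algebra an element is invertible iff it is a non-zero-divisor, and finally Theorem \ref{prop:radideal} ($\sqrt{\Ker H_\Lambda}=\Ker H_{\Delta\cdot\Lambda}$) to convert ``$\Delta$ is a non-zero-divisor'' into ``$\Ker H_\Lambda$ is radical.'' Each step checks out, including the observation that $\Ker H_\Lambda\subseteq\Ker H_{\Delta\cdot\Lambda}$ is automatic, so that non-zero-divisor status of $\Delta$ is equivalent to equality of the two kernels, i.e.\ radicality. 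The sanity check via Proposition \ref{propradideal} and interpolation is also sound. The net effect is that your version actually buys the reader something the paper does not: a proof that lives within the paper's own framework rather than deferring to an external reference.
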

\begin{proof}
See \cite[Sec. 2.2]{Lau07}.
\end{proof}
 

\section{ Truncated Hankel Operators}\label{sec:4}

We have seen in the previous section that the kernel of the Hankel
operator associated to a positive linear form is a real radical
ideal. However, in order to be able to exploit this property into an
algorithm, we need to restrict our analysis to matrices of finite
size.  For this reason, we consider here truncated Hankel operators,
which will play a central role for the construction of (real) radical
ideals.

For $E\subset \PK$, set $E\cdot E:=\{p\,q\mid p,q\in E\}$. Suppose now
$E\subset \PK$ is a vector space.
A linear form $\Lambda$ defined on $\Span{E\cdot E}$ yields the map $\HE : E \rightarrow
E^{\ast}$ by $\HE(p) = p \cdot \Lambda$ for $p\in E$.  Thus $\HE$ can be seen
as a truncated Hankel operator, defined only on the subspace $E$. 

Given a subspace $E_0\subset E$, $\Lambda$ induces a linear map on $\Span{E_0\cdot E_0}$ and we can consider
the induced truncated Hankel operator
$H^{E_0}_\Lambda: E_0 \rightarrow (E_0)^\ast$.
\begin{definition}
\label{defflatextension}Given vector subspaces 
$E_0 \subset E \subset \PK$ and $\Lambda \in { \Span{E \cdot E}}^{\ast}$, $H_{\Lambda}^E$ is
 said to be a \textit{flat extension} of its restriction $H^{E_0}_{\Lambda}$ to
 $E_0$ if $\rank H^E_{\Lambda} = \rank H^{E_0}_{\Lambda}$.
\end{definition}

We now give some conditions ensuring that it is possible to
construct a flat extension of a given truncated Hankel operator.
The next result extends an earlier result of Curto-Fialkow
\cite{CF96}; a generalization of this result can be found in \cite{BBCM11}.

\begin{theorem}\cite{MoLa2008}
 \label{theoflatextension}
Consider a vector subspace $E \subset {\PK}$
 and a linear function $\Lambda$ on $\SpanK{E^+ \cdot E^+}$. 
Assume that $E=\SpanK{\CC}$ where $\CC\subset \Mon$ is connected to 1 and that
 $\rank H^{E^+}_{\Lambda} = \rank H^E_{\Lambda}$. 
Then there exists a
 (unique) linear function $\tilde{\Lambda} \in \PK^{\ast}$ which extends
 $\Lambda$, i.e., $\tilde{\Lambda} (p) = \Lambda (p)$ for all $p \in
 \SpanK{E^+ \cdot E^+}$, and satisfying $\rank H_{\tilde{\Lambda}} = \rank
 H^{E^+}_{\Lambda} .$ In other words, the truncated Hankel operator
 $H^{E^+}_{\Lambda}$ has a (unique) flat extension to a (full) Hankel operator
 $H_{\tilde{\Lambda}}$.
\end{theorem}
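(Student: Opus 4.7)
The plan is to build a border basis supported on a well-chosen subset $\BB\subseteq\CC$ and then define $\tilde\Lambda$ via the resulting normal form. Set $r:=\rank H^{E^+}_\Lambda=\rank H^E_\Lambda$. First I would select by a degree-by-degree greedy argument a subset $\BB\subseteq\CC$ of size $r$, connected to $1$, whose principal submatrix of $[H^E_\Lambda]$ is nonsingular, so that $\Span{\BB}$ is complementary to $\Ker H^E_\Lambda$ inside $E$. A rank comparison along the inclusion $E\hookrightarrow E^+$ then yields $\Ker H^{E^+}_\Lambda\cap E=\Ker H^E_\Lambda$ and an induced isomorphism $\Span{\BB}\xrightarrow{\sim}E^+/\Ker H^{E^+}_\Lambda$; in particular $\Span{\BB}\cap\Ker H^{E^+}_\Lambda=\{0\}$.

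Via this isomorphism, for each $m\in\partial\BB\subseteq\CC^+\subseteq E^+$ there exist unique scalars $\lambda^m_b$ ($b\in\BB$) with $f_m:=m-\sum_{b\in\BB}\lambda^m_b\, b\in\Ker H^{E^+}_\Lambda$. The collection $F:=\{f_m:m\in\partial\BB\}$ is a graded rewriting family for $\BB$, complete in every degree, and Definition \ref{defpiBF} produces a normal-form map $\piFB$ on all of $\oK[\xx]$.

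The heart of the proof is the commutation check required by Corollary \ref{thmcomanyt}: for every $b\in\BB$ and all $i,j\in[1,n]$, $\piFB(x_i\piFB(x_j b))=\piFB(x_j\piFB(x_i b))$. Both sides lie in $\Span{\BB}$, so it suffices to show they agree under $\Lambda$ against every $b'\in\BB$. Using $x_j b-\piFB(x_j b)\in\Ker H^{E^+}_\Lambda$ together with $x_i b'\in\CC^+\subseteq E^+$, one gets
\[
\Lambda\!\bigl(\piFB(x_j b)\cdot x_i b'\bigr)=\Lambda(x_j b\cdot x_i b')=\Lambda(x_ix_j b\cdot b'),
\]
and iterating this observation one more step yields $\Lambda(\piFB(x_i\piFB(x_j b))\cdot b')=\Lambda(x_ix_j b\cdot b')$; the symmetric version produces the same value for the right-hand side. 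This is where the flat extension hypothesis is essential: it is precisely what forces the multiplications by each $x_i$ to descend to pairwise commuting operators on $\Span{\BB}$ while keeping all intermediate products inside the domain $\SpanK{E^+\cdot E^+}$ of $\Lambda$. The main obstacle in the whole proof is this bookkeeping of degrees and domains.

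With commutation established, Corollary \ref{thmcomanyt} yields $\oK[\xx]=\Span{\BB}\oplus(F)$ and identifies $\piFB$ with the projection onto $\Span{\BB}$ along $(F)$. I then define $\tilde\Lambda(p):=\Lambda(\piFB(p))$ for $p\in\oK[\xx]$. A degree induction based on the calculation above shows $\tilde\Lambda$ agrees with $\Lambda$ on $\SpanK{E^+\cdot E^+}$. The image of $H_{\tilde\Lambda}$ is spanned by $\{b\cdot\tilde\Lambda:b\in\BB\}$, giving $\rank H_{\tilde\Lambda}\le r$, while the reverse inequality holds because $H_{\tilde\Lambda}$ restricts to $H^{E^+}_\Lambda$ on $E^+$, which already has rank $r$. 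Uniqueness is automatic: any extension whose full Hankel operator has rank $r$ is forced to assign each monomial $x^\alpha$ the value $\Lambda(\piFB(x^\alpha))$.
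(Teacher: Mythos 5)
The paper does not supply a proof of this statement---it is quoted from \cite{MoLa2008}---so there is no in-paper argument to compare against. Your outline (choose a connected-to-$1$ monomial set $\BB\subseteq\CC$ complementary to $\Ker H^E_\Lambda$, read border relations off $\Ker H^{E^+}_\Lambda$, verify the commutation relations of Corollary~\ref{thmcomanyt}, and set $\tilde\Lambda:=\Lambda\circ\piFB$) is the correct high-level strategy, and your commutation calculation is sound as written. However, two of the steps you merely assert are precisely the places where the proof is genuinely nontrivial, and the sketch does not close them.

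First, the existence of a connected-to-$1$ subset $\BB\subseteq\CC$ of size $r$ with nonsingular principal block of $[H^E_\Lambda]$ does not follow from a routine degree-by-degree greedy. When the greedy stalls, one only knows that for each $m\in\partial\BB\cap\CC$ the bordered $(\BB\cup\{m\})$-block is singular; for a general symmetric matrix this does \emph{not} imply that the full row of $m$ lies in the span of the $\BB$-rows (for instance $\left(\begin{smallmatrix}1&0&1\\0&0&1\\1&1&1\end{smallmatrix}\right)$ has rank $3$, yet both $2\times 2$ principal blocks bordering the nonsingular $(1,1)$-entry are singular), so the greedy can stall strictly below rank $r$. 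Ruling this out requires propagating the kernel relations along the connected structure of $\CC$ via the rank-flatness hypothesis, which is exactly where the assumptions do real work. Second, the claim that ``a degree induction shows $\tilde\Lambda$ agrees with $\Lambda$ on $\SpanK{E^+\cdot E^+}$'' conceals the crux. Your identity $\Lambda(\piFB(x_i\piFB(x_jb))\,b')=\Lambda(x_ix_jbb')$ relies on $x_ib'\in E^+$; iterating it to handle $\Lambda(\piFB(mm'))$ for arbitrary $m,m'\in\CC^+$ pushes the monomial multipliers out of $E^+$, so $\Lambda$ is no longer applicable to the intermediate products and the same manipulation cannot simply be repeated. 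What is actually needed is an identification such as $(F)\cap E^+=\Ker H^{E^+}_\Lambda$ (equivalently, that $\piFB$ restricted to $E^+$ coincides with the projection along $\Ker H^{E^+}_\Lambda$), and this is the technical heart of the flat-extension argument. You correctly flag the degree/domain bookkeeping as ``the main obstacle'' but do not carry it out, so the extension step remains incomplete.
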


In the following, we will deal with linear forms vanishing on a given set $G$ of polynomials.
\begin{definition}
Given a vector space $E\subset \PK$ and $G\subset \SpanK{E\cdot E}$, define the set
\begin{equation}\label{eqKE}
\Lc_{G,E}:=\{\Lambda\in  \SpanK{E\cdot E}^{\ast}\mid \Lambda(g)=0\ \forall g\in G\}.
\end{equation}
If $\oK=\oR$, define 
\begin{equation}\label{eqKEpos}
\Lc_{G,E,\succeq}:=\{\Lambda\in \Lc_{G,E}\mid \Lambda(p^2)\ge 0\ \forall p\in E\}.
\end{equation}
For an integer $t\in \NN$ and $G\subset \PK_{2t}$, taking
$E=\PK_{t}$, we abbreviate our notation and set
$\Lc_{G,t}:=\Lc_{G,\PK_{t}}$ and $\Lc_{G,t,\succeq}:=\Lc_{G,\PK_{t},\succeq}$
when $\oK=\oR$.
\end{definition}

\subsection{Truncated Hankel operators and  radical ideals}

In this section, we assume that $E$ is a finite dimensional vector space. 
The following definition for generic elements of $\Lc_{G,E}$ is justified 
by Theorem \ref{theogeneric} below.

\begin{definition}\label{defgeneric}
Let $G\subset \SpanK{E\cdot E}$ where $E$ is a finite dimensional subspace of $\PK$.
An element $\Lambda^*\in \Lc_{G,E}$ is said to be {\em generic} if 
\begin{equation}\label{eqgenericE0}
{\rank}\,  H^{E_0}_{\Lambda^{*}}=\max_{\Lambda\in \Lc_{G,E}}
\rank H^{E_0}_{\Lambda}
\end{equation}
for all subspaces  $E_0\subset E.$
\end{definition}

If $\oL$ is a field containing $\oK$, we denote by $\Lc_{G,E}^{\oL}:=\Lc_{G,E}\otimes \oL$, the space obtained by considering the vector spaces 
over $\oL$ in \eqref{eqKE}.
We recall here a classical result about generic properties over field
extensions, which will be used to give a simpler proof of a result
that we need from \cite{LLR08b}.
\begin{lemma}\label{lem:generic}
Let $\oK$ be a field of characteristic $0$ and $\oL$ a field containing $\oK$. If 
$\Lambda^{*}$ is a generic element in $\Lc_{G,E}^{\oK}$, then it is generic in $\Lc_{G,E}^{\oL}$.
\end{lemma}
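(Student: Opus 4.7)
The plan is to reduce the statement to a standard fact: a polynomial with coefficients in the infinite field $\oK$ that is nonzero when viewed over $\oL$ is already nonzero when viewed over $\oK$, and therefore does not vanish at every point of $\oK^N$.

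First I would observe that $\Lc_{G,E}^{\oL}$ is simply $\Lc_{G,E}^{\oK}\otimes_{\oK}\oL$. This is because $\Lc_{G,E}^{\oK}$ is defined as the kernel of the $\oK$-linear map $\Span{E\cdot E}^{*}\to \oK^{G}$, $\Lambda\mapsto(\Lambda(g))_{g\in G}$, and tensoring with the flat $\oK$-module $\oL$ preserves kernels. Consequently, any $\oK$-basis $e_{1},\ldots,e_{N}$ of $\Lc_{G,E}^{\oK}$ is also an $\oL$-basis of $\Lc_{G,E}^{\oL}$. Hence every $\Lambda\in\Lc_{G,E}^{\oL}$ can be written uniquely as $\Lambda=\sum_{i=1}^{N}\lambda_{i}e_{i}$ with $\lambda_{i}\in\oL$.

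Next, for any finite dimensional subspace $E_{0}\subseteq E$, the entries of the matrix of $H^{E_{0}}_{\Lambda}$ (in any fixed basis) depend $\oK$-linearly on the coefficients $\lambda_{i}$. Therefore, for any integer $k$, each $k\times k$ minor of $H^{E_{0}}_{\Lambda}$ is a polynomial $M_{k}(\lambda_{1},\ldots,\lambda_{N})\in\oK[\lambda_{1},\ldots,\lambda_{N}]$, independent of whether we view the $\lambda_{i}$ as lying in $\oK$ or in $\oL$. Set
\[
r^{\oK}(E_{0})=\max_{\Lambda\in\Lc_{G,E}^{\oK}}\rank H^{E_{0}}_{\Lambda},
\qquad
r^{\oL}(E_{0})=\max_{\Lambda\in\Lc_{G,E}^{\oL}}\rank H^{E_{0}}_{\Lambda}.
\]
Clearly $r^{\oK}(E_{0})\le r^{\oL}(E_{0})$. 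For the reverse inequality, suppose $r^{\oL}(E_{0})\ge k$. Then some $k\times k$ minor $M_{k}$ takes a nonzero value at some point of $\oL^{N}$, which means $M_{k}$ is not the zero polynomial in $\oK[\lambda_{1},\ldots,\lambda_{N}]$. Since $\oK$ has characteristic $0$, it is infinite, and a nonzero polynomial over an infinite field does not vanish on all of $\oK^{N}$. Hence there exists $(\lambda_{1}^{0},\ldots,\lambda_{N}^{0})\in\oK^{N}$ with $M_{k}(\lambda^{0})\neq 0$, giving a $\Lambda^{0}\in\Lc_{G,E}^{\oK}$ of rank at least $k$. This yields $r^{\oL}(E_{0})\le r^{\oK}(E_{0})$, so the two maxima coincide.

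Finally, if $\Lambda^{*}\in\Lc_{G,E}^{\oK}$ is generic, then $\rank H^{E_{0}}_{\Lambda^{*}}=r^{\oK}(E_{0})=r^{\oL}(E_{0})$ for every subspace $E_{0}\subseteq E$, which is precisely the condition for $\Lambda^{*}$ to be generic in $\Lc_{G,E}^{\oL}$. The only subtle point in the argument is the step invoking infiniteness of $\oK$; everything else is bookkeeping about flat base change and the fact that rank is controlled by the vanishing of minors.
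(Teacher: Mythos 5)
Your proof is correct and follows the same core idea as the paper's: parametrize the linear space $\Lc_{G,E}$ by coordinates, observe that the relevant minors are polynomials in those coordinates, and use that a nonzero polynomial over a characteristic-$0$ (hence infinite) field does not vanish identically on affine space. The paper's argument picks a $\oK$-basis $H_1,\ldots,H_l$ of the space of matrices $H^E_\Lambda$, introduces variables $u_1,\ldots,u_l$, and takes $\rho$ to be the largest size of a nonvanishing minor of $H(\ub)=\sum u_i H_i$ over $\oK[\ub]$, concluding that the generic rank coincides over $\oK$ and over $\oL$.

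Your proposal is noticeably more careful at two points that the paper treats tersely. First, you make the genericity condition in Definition~\ref{defgeneric} explicit by showing $r^{\oK}(E_0)=r^{\oL}(E_0)$ for \emph{every} subspace $E_0\subseteq E$, not just for $E_0=E$ as in the paper's proof sketch; genericity per Definition~\ref{defgeneric} is a simultaneous condition over all $E_0$, so this extra care is warranted. Second, you correctly tie the conclusion back to the \emph{given} $\Lambda^*$: once you know the two maxima agree for each $E_0$, the fact that $\Lambda^*$ realizes the $\oK$-maximum immediately means it realizes the $\oL$-maximum. The paper instead exhibits some $\ub_0\in\oK^l$ with $H(\ub_0)$ of rank $\rho$ and then asserts that this "corresponds to a generic element," leaving the reader to supply the same inference. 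Your framing via flat base change ($\Lc_{G,E}^{\oL}=\Lc_{G,E}^{\oK}\otimes_{\oK}\oL$) is slightly heavier machinery than needed, but it makes the coordinate-independence of the minor polynomials transparent. Overall this is the same route as the paper, executed with more precision.
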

\begin{proof} 
The space of matrices $H^{E}_{\Lambda}$ for $\Lambda \in \Lc_{G,E}^{\oK}$
is a vector space spanned by a basis $H_{1},
\ldots, H_{l}$ over $\oK$ (resp. $\oL$). Let $u_{1},\ldots,u_{l}$ be new
variables and $\rho$ be the maximal size of a non-zero minor $\in \oK[\ub]$ of
$H(\ub):=\sum_{i=1}^l \, u_{i} H_{i}$. Then for any value of 
$\ub\in \oK^{l}$ (resp. $\ub\in \oL^{l}$), the matrix $H(\ub)$ is of rank
$\le \rho$. Since $\oK$ is of characteristic $0$ there exists $\ub_{0}\in
\oK^{l}$ with $H(\ub_{0})$ of rank $\rho$, which corresponds to a generic element
in $\Lc_{G,E}^{\oK}$ and in $\Lc_{G,E}^{\oL}$.
\end{proof}
 
\begin{theorem}\label{theogeneric}
Let $E$ be a finite dimensional subspace of $\PK$ and let $G\subset \SpanK{E\cdot E}$.
Assume $\Lambda^{*}\in \Lc_{G,E}$ is generic, ie. satisfies (\ref{eqgenericE0}).
Then, $\Ker H_{\Lambda^{*}}^{E} \subset \sqrt {(G)}.$
\end{theorem}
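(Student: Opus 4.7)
My plan is to argue by contradiction via Hilbert's Nullstellensatz: since $\sqrt{(G)}=I(V_{\overline{\oK}}(G))$, it suffices to show that every $n\in N:=\Ker H^E_{\Lambda^*}$ vanishes on $V_{\overline{\oK}}(G)$. So I will assume that there exist $n\in N$ and $\zeta\in V_{\overline{\oK}}(G)$ with $n(\zeta)\ne 0$, and aim to contradict the genericity of $\Lambda^*$.

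First I would perturb $\Lambda^*$ by the evaluation at $\zeta$. Introducing a transcendental parameter $t$, set
$$\Lambda_t := \Lambda^*+t\,\un_\zeta,$$
which lies in $\Lc_{G,E}^{\overline{\oK}(t)}$ since $\un_\zeta(g)=g(\zeta)=0$ for all $g\in G$. By Lemma \ref{lem:generic} the form $\Lambda^*$ remains generic after extension of scalars to $\overline{\oK}(t)$; applied with $E_0=E$ in Definition \ref{defgeneric} this yields
$$\rank H^E_{\Lambda_t}\;\le\;\rank H^E_{\Lambda^*}=r,$$
the left-hand side being computed over $\overline{\oK}(t)$. Choosing any $\oK$-basis $\{e_1,\ldots,e_d\}$ of $E$ and setting $v:=(e_i(\zeta))_i\in\overline{\oK}^d$, the matrix of $H^E_{\Lambda_t}$ becomes $M(t)=M_0+t\,vv^T$ with $M_0:=[H^E_{\Lambda^*}]$ symmetric of rank $r$.

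The heart of the argument---and the main obstacle---is then a rank-one update calculation. Because $M_0$ is symmetric, $(\tmop{im}\, M_0)^{\perp}=\Ker M_0$ under the standard bilinear pairing, so the assumption $n(\zeta)\ne 0$, which reads $\tilde n^{T}v\ne 0$ for the coefficient vector $\tilde n\in\Ker M_0$ of $n$, is precisely the statement $v\notin\tmop{im}\, M_0$. Then $M(t)\tilde n=t(v^{T}\tilde n)\,v$ shows $v\in\tmop{im}\, M(t)$ over $\overline{\oK}(t)$; combined with the obvious inclusion $\tmop{im}\, M(t)\subseteq \tmop{im}\, M_0+\tmop{span}(v)$, this yields $\tmop{im}\, M(t)=\tmop{im}\, M_0\oplus\tmop{span}(v)$ and $\rank M(t)=r+1$, contradicting the bound above. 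The formal parameter $t$ is essential for exactly this step: specialising $t$ to a scalar could disguise the rank jump through cancellation, whereas over $\overline{\oK}(t)$ the genericity forced by Lemma \ref{lem:generic} applies verbatim.
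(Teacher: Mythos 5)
Your proof is correct, and the high-level strategy matches the paper's: invoke Lemma \ref{lem:generic} to pass to a larger field, then perturb $\Lambda^{*}$ by the evaluation at a putative root $\zeta\in V_{\clK}(G)$ where some kernel element fails to vanish, and derive a contradiction with genericity. Where you diverge is in the mechanism for the contradiction. The paper works directly over $\clK$ with the unparametrised perturbation $\Lambda'=\Lambda^{*}+\unb_{\zeta}$: from genericity it deduces the existence of some $f'\in\Ker H^{E}_{\Lambda'}\setminus\Ker H^{E}_{\Lambda^{*}}$, checks that $f'(\zeta)\neq 0$, and then gets a contradiction from the single bilinear identity $0=\Lambda'(ff')=f(\zeta)f'(\zeta)$, exploiting the symmetry of the Hankel form. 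You instead introduce a transcendental parameter, write $[H^{E}_{\Lambda_t}]$ as a rank-one update $M_0+t\,vv^T$, and show that $v\notin\tmop{im}M_0$ forces $\rank M(t)=r+1$ over $\clK(t)$, contradicting genericity over $\clK(t)$ (again via Lemma \ref{lem:generic}). Both are sound, and your rank-one-update bookkeeping — including the use of $(\tmop{im}M_0)^{\perp}=\Ker M_0$ for a symmetric matrix under the standard bilinear pairing — is correct. Your approach is more linear-algebraic and self-contained once you've set up $M(t)$; the paper's is shorter and avoids the transcendental extension altogether. One small correction: your closing remark that the parameter $t$ is ``essential'' is not accurate — the paper's argument shows that the specialisation $t=1$ already yields the contradiction, just via a different (symmetry-based rather than rank-counting) route; the transcendental $t$ is a convenience that makes the rank jump visible through the update formula, not a necessity.
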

\begin{proof} 
By Lemma \ref{lem:generic}, $\Lambda^{*}$ is a generic
element of $\Lc_{G,E}$ over $\oR$ or $\oC$ and thus we can assume that $\oK=\clK$.
Let $v\in V_{\clK}(G)$, let $\unb_v$ denotes the evaluation at $v$ restricted
to $\Span{E\cdot E}$ and let $f\in \KE$. Our objective is to show that $f(v)=0$. 
Suppose for contradiction that $f(v)\neq 0$. 

Notice that $\unb_v$ and $\Lambda':=\Lambda^{*}+\unb_v$ belong to $ \Lc_{G,E}$.
As $\Lambda'(f^{2})=f^{2}(v)\neq 0$, $f\in \Ker H^E_{\Lambda}\setminus\Ker H^E_{\Lambda'}$ and
by the maximality of the rank of $\HE$ $\Ker H^E_{\Lambda'}\not\subset \Ker \HE$. Hence there exists $f'\in
\Ker H^E_{\Lambda'}\setminus \Ker \HE$. Then, 
$0= H_{\Lambda'}^{E} (f')= \HE(f')+f'(v)\unb_v$ implies $f'(v)\neq
0$. On the other hand, 
$$
0=H_{\Lambda'}^{E}(f')(f)=\Lambda'(ff')=\Lambda(ff')+f(v)f'(v)= H_{\Lambda^{*}}^{E}(f)(f') +f(v)f'(v)= f(v)f'(v),
$$ yielding a contradiction.
\end{proof}

\subsection{Truncated Hankel operators, positivity and real radical ideals}

We first give a result which relates the kernel of $\HE$ with the
real radical of an ideal $(G)$, when $\Lambda$ is positive and 
vanishes on a given set $G$ of polynomials.
We start with the following result, which motivates our definition of
the generic property for a positive linear form.
\begin{proposition}\label{lemkergensubset}
For $\Lambda^{*}\in \Lc_{G,E,\succeq}$, the following assertions are equivalent:
\begin{itemize}
 \item[(i)]  $\rank H_{\Lambda^{*}}^{E}= \max_{\Lambda \in \Lc_{G,E,\succeq}} \rank H_{\Lambda}^{E}$.
 \item[(ii)] $\Ker H_{\Lambda^{*}}^E \subset \Ker\HE$ for all $\Lambda \in \Lc_{G,E,\succeq}$.
\item[(iii)] $\rank  H_{\Lambda^{*}}^{E_0}=\max_{\Lambda \in \Lc_{G,E,\succeq}}
\rank H_{\Lambda}^{E_0}$ for any subspace $E_0\subset E$.
\end{itemize}
Call $\Lambda^{*}\in \Lc_{G,E,\succeq}$ {\em generic} if it satisfies any of the equivalent conditions (i)--(iii) and set
$$
\Kc_{G,E,\succeq} := \Ker H_{\Lambda^{*}}^E \ \text{ for any generic }\Lambda^{*}\in \Lc_{G,E,\succeq}.$$
\end{proposition}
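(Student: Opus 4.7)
The plan is to prove the cyclic chain of implications \textrm{(i)}$\Rightarrow$\textrm{(ii)}$\Rightarrow$\textrm{(iii)}$\Rightarrow$\textrm{(i)}. The third implication is immediate (specialize $E_0=E$), so all the work goes into the first two, and the natural tool is a truncated analogue of Lemma~\ref{lemp2} adapted to positive linear forms on $\Span{E\cdot E}$.

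First I would record two easy facts that follow from positivity by the same $\Lambda((p+tq)^2)\ge 0$ trick used in Lemma~\ref{lemp2}. Fact~A: if $\Lambda\in \Lc_{G,E,\succeq}$ and $p\in E$ with $\Lambda(p^2)=0$, then $\Lambda(pq)=0$ for every $q\in E$; consequently, for any subspace $E_0\subseteq E$ one has $\Ker H_\Lambda^{E_0}=\Ker H_\Lambda^{E}\cap E_0$ (the nontrivial inclusion uses Fact~A applied to $p\in\Ker H_\Lambda^{E_0}$, noting that $pq\in E\cdot E$ is in the domain of $\Lambda$). Fact~B: if $\Lambda,\Lambda'\in \Lc_{G,E,\succeq}$, then $\Lambda+\Lambda'\in \Lc_{G,E,\succeq}$, and $\Ker H_{\Lambda+\Lambda'}^{E}=\Ker H_{\Lambda}^{E}\cap \Ker H_{\Lambda'}^{E}$; the $\supseteq$ inclusion is immediate, and the $\subseteq$ inclusion follows because if $p\in\Ker H_{\Lambda+\Lambda'}^{E}$ then $\Lambda(p^2)+\Lambda'(p^2)=0$, and both summands are nonnegative, so each vanishes and Fact~A applies to each.

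For \textrm{(i)}$\Rightarrow$\textrm{(ii)}, take any $\Lambda\in \Lc_{G,E,\succeq}$ and form $\Lambda^{*}+\Lambda\in \Lc_{G,E,\succeq}$. By Fact~B,
\[
\Ker H_{\Lambda^{*}+\Lambda}^{E}=\Ker H_{\Lambda^{*}}^{E}\cap \Ker H_{\Lambda}^{E}\subseteq \Ker H_{\Lambda^{*}}^{E},
\]
hence $\rank H_{\Lambda^{*}+\Lambda}^{E}\ge \rank H_{\Lambda^{*}}^{E}$. The maximality in \textrm{(i)} forces equality of ranks, hence equality of kernels, and therefore $\Ker H_{\Lambda^{*}}^{E}\subseteq \Ker H_{\Lambda}^{E}$, which is \textrm{(ii)}. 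For \textrm{(ii)}$\Rightarrow$\textrm{(iii)}, fix $E_0\subseteq E$ and $\Lambda\in \Lc_{G,E,\succeq}$; applying Fact~A to both $\Lambda^{*}$ and $\Lambda$ gives $\Ker H_{\Lambda^{*}}^{E_0}=\Ker H_{\Lambda^{*}}^{E}\cap E_0$ and $\Ker H_{\Lambda}^{E_0}=\Ker H_{\Lambda}^{E}\cap E_0$, so \textrm{(ii)} yields $\Ker H_{\Lambda^{*}}^{E_0}\subseteq \Ker H_{\Lambda}^{E_0}$ and therefore $\rank H_{\Lambda^{*}}^{E_0}\ge \rank H_{\Lambda}^{E_0}$, proving \textrm{(iii)}.

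The only real subtlety, and so the main thing to check carefully, is Fact~A: for $p\in E_0$ we need $\Lambda(pq)$ to make sense for every $q\in E$, which is precisely ensured because $\Lambda$ is defined on $\Span{E\cdot E}$ and $pq\in E\cdot E$. Everything else reduces to the convexity of the positive cone $\Lc_{G,E,\succeq}$ and the standard fact that kernel inclusion of Hankel-type operators on a finite dimensional space is equivalent to rank equality. Once the equivalence is established, the definition of $\Kc_{G,E,\succeq}:=\Ker H_{\Lambda^{*}}^{E}$ is legitimate because \textrm{(ii)} shows that any two generic $\Lambda^{*}$ have the same kernel (by applying \textrm{(ii)} to each with the other in the role of $\Lambda$).
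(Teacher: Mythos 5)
Your proof is correct and follows essentially the same route as the paper: (i)$\Rightarrow$(ii) via the sum $\Lambda^*+\Lambda$ and the kernel-intersection identity from Lemma~\ref{lemp2}, (ii)$\Rightarrow$(iii) via the observation that $\Ker H_\Lambda^{E_0}=\Ker H_\Lambda^E\cap E_0$ for positive $\Lambda$, and (iii)$\Rightarrow$(i) by specializing $E_0=E$. Your ``Fact~A'' and ``Fact~B'' simply make explicit the truncated analogues of Lemma~\ref{lemp2} that the paper invokes implicitly, and your closing remark about well-definedness of $\Kc_{G,E,\succeq}$ is a correct (and worthwhile) extra check.
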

\begin{proof}
(i) $\Longrightarrow$ (ii): Note that $\Lambda+\Lambda^{*}\in \Lc_{G,E,\succeq}$ 
and $\Ker H^E_{\Lambda+\Lambda^{*}}=\Ker H^E_\Lambda \cap \Ker H^E_{\Lambda^{*}}$ (using Lemma \ref{lemp2}). Hence,
$\rank  H^E_{\Lambda+\Lambda^{*}} \ge \rank  H^E_{\Lambda^{*}}$ and
thus equality holds. This implies  that
$\Ker H^E_{\Lambda+\Lambda^{*}}=\Ker H^E_{\Lambda^{*}}$ is thus contained in 
$\Ker H^E_\Lambda$.\\
(ii) $\Longrightarrow$ (iii): Given $E_0\subset E$, we show that
$\Ker H^{E_0}_{\Lambda^{*}}\subset \Ker H^{E_0}_{\Lambda}$. By Lemma \ref{lemp2}, we have
$\Ker H^{E_0}_{\Lambda^{*}}\subset \Ker H^{E}_{\Lambda^{*}}$ and, by the above, we have
$\Ker H^{E}_{\Lambda^{*}} \subset \Ker H^E_\Lambda$.\\
The implication (iii) $\Longrightarrow$ (i) is obvious.
\end{proof}
\begin{lemma}\label{lemsubsetker}
let $G_0\subset G \subset \Span{E\cdot E}$. Then,
$\Kc_{G_0,E, \succcurlyeq}\subset \Kc_{G,E, \succcurlyeq}$. 
\end{lemma}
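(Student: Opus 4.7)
The plan is to unfold the definitions and observe that enlarging the set of linear constraints shrinks the admissible set of linear forms, so characterization (ii) of genericity in Proposition \ref{lemkergensubset} applies directly.

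First I would note the key inclusion at the level of linear forms: since $G_0 \subset G$, any $\Lambda \in \SpanK{E\cdot E}^{*}$ which vanishes on $G$ automatically vanishes on $G_0$, while the positivity condition $\Lambda(p^2) \ge 0$ for all $p \in E$ does not depend on $G$. Therefore
\[
\Lc_{G, E, \succcurlyeq} \subset \Lc_{G_0, E, \succcurlyeq}.
\]

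Next, pick a generic element $\Lambda_0^{*} \in \Lc_{G_0, E, \succcurlyeq}$, so that $\Kc_{G_0, E, \succcurlyeq} = \Ker H^E_{\Lambda_0^{*}}$, and a generic element $\Lambda^{*} \in \Lc_{G, E, \succcurlyeq}$, so that $\Kc_{G, E, \succcurlyeq} = \Ker H^E_{\Lambda^{*}}$. By the inclusion above, $\Lambda^{*}$ also belongs to $\Lc_{G_0, E, \succcurlyeq}$. Now I apply characterization (ii) from Proposition \ref{lemkergensubset} to $\Lambda_0^{*}$: since $\Lambda_0^{*}$ is generic in $\Lc_{G_0, E, \succcurlyeq}$, its kernel $\Ker H^E_{\Lambda_0^{*}}$ is contained in $\Ker H^E_{\Lambda}$ for every $\Lambda \in \Lc_{G_0, E, \succcurlyeq}$. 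Specialising to $\Lambda = \Lambda^{*}$ yields
\[
\Kc_{G_0, E, \succcurlyeq} = \Ker H^E_{\Lambda_0^{*}} \subset \Ker H^E_{\Lambda^{*}} = \Kc_{G, E, \succcurlyeq},
\]
which is the desired inclusion. There is no real obstacle here beyond invoking the correct equivalent formulation of genericity; the content of the lemma is already packaged inside Proposition \ref{lemkergensubset}(ii).
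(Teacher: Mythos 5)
Your proof is correct and follows the same route as the paper: you observe $\Lc_{G,E,\succcurlyeq}\subset\Lc_{G_0,E,\succcurlyeq}$ and then invoke Proposition~\ref{lemkergensubset}(ii) for a generic element of the larger space $\Lc_{G_0,E,\succcurlyeq}$, applied to a generic element of the smaller one. The paper's proof is just a more compact phrasing of exactly this argument.
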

\begin{proof}
Let $\Lambda \in \Lc_{G, E,\succeq}$ be a generic element, so that
$\Ker \HE= \Kc_{G,E, \succcurlyeq}$.
Obviously, $\Lambda\in \Lc_{G_0,E,\succcurlyeq}$, which implies that 
$\Ker \HE \supseteq \Kc_{G_0,E, \succcurlyeq}$.
\ignore{
 As $F\subset G$, we have $\Lc_{G, E,\succcurlyeq} \subset \Lc_{F, E,\succcurlyeq}$. Therefore, 
for a generic $\Lambda \in \Lc_{G, E,\succcurlyeq}\subset \Lc_{F, E,\succcurlyeq}$ such that $\Kc_{G,E,\succcurlyeq} =\Kc_{E}(\Lambda)$,
we have using Proposition \ref{lemkergensubset}
\[ 
\Kc_{F,E,\succcurlyeq} \subset \Kc_{E}(\Lambda) = \Kc_{G,E,\succcurlyeq}.
\]
}
\end{proof}

\begin{theorem}\label{theogenericpositive}
Let $G\subset \Span{E\cdot E}$, where  $E$ is a finite dimensional
subspace of $\PR$. 
 Then, 
 $\Kc_{G,E,\succcurlyeq} \subset \sqrt[\oR] {(G)}.$
\end{theorem}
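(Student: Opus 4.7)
The plan is to use the Real Nullstellensatz together with the genericity characterization from Proposition \ref{lemkergensubset}, by testing the generic positive form against evaluations at real points of $V_\oR(G)$.

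First, I would fix a generic element $\Lambda^{*} \in \Lc_{G,E,\succcurlyeq}$, so that by definition $\Kc_{G,E,\succcurlyeq} = \Ker H^E_{\Lambda^{*}}$. Let $f \in \Kc_{G,E,\succcurlyeq}$; the goal is to show $f \in \sqrt[\oR]{(G)}$. By the Real Nullstellensatz, this amounts to showing that $f(v) = 0$ for every $v \in V_\oR((G)) = V_\oR(G)$.

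Next, for any $v \in V_\oR(G)$, I would consider the evaluation functional $\unb_v$, restricted to $\SpanK{E \cdot E}$. Two quick observations put $\unb_v$ into $\Lc_{G,E,\succcurlyeq}$: on the one hand, $\unb_v(g) = g(v) = 0$ for all $g \in G \subseteq \SpanK{E \cdot E}$ since $v \in V_\oR(G)$; on the other hand, $\unb_v(p^2) = p(v)^2 \geq 0$ for $p \in E \subseteq \PR$, so $\unb_v \succcurlyeq 0$.

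Now I would invoke the implication (i)$\Longrightarrow$(ii) of Proposition \ref{lemkergensubset}, which gives $\Ker H^E_{\Lambda^{*}} \subseteq \Ker H^E_{\unb_v}$. Hence $f \in \Ker H^E_{\unb_v}$, meaning $\unb_v(f q) = f(v)\, q(v) = 0$ for every $q \in E$. To extract $f(v) = 0$ from this, I would take $q := f$ itself (since $f \in \Kc_{G,E,\succcurlyeq} \subseteq E$), which yields $f(v)^2 = 0$, hence $f(v) = 0$. Applying this for every $v \in V_\oR(G)$, the Real Nullstellensatz gives $f \in I(V_\oR(G)) = \sqrt[\oR]{(G)}$, completing the proof.

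The only subtle point is the verification that $\unb_v$ belongs to $\Lc_{G,E,\succcurlyeq}$ (which relies crucially on $G \subseteq \SpanK{E \cdot E}$, so that $\unb_v$ can actually be evaluated on the generators), and the trick of choosing $q = f$ in the truncated Hankel relation to obtain $f(v)^2 = 0$ without needing $1 \in E$ or any separation assumption on $E$ at $v$. Everything else is a direct application of the previously established results.
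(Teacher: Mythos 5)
Your proof is correct and follows essentially the same route as the paper's: fix a generic $\Lambda^{*}\in \Lc_{G,E,\succcurlyeq}$, check that the evaluation $\unb_v$ at any $v\in V_\oR(G)$ lies in $\Lc_{G,E,\succcurlyeq}$, invoke Proposition~\ref{lemkergensubset} to get $\Ker H^E_{\Lambda^*}\subset \Ker H^E_{\unb_v}$, and conclude via the Real Nullstellensatz. Your one small refinement — taking $q=f$ to read off $f(v)^2=0$ — is actually a nice improvement: the paper's inclusion $\Ker H^E_{\unb_v}\subset I(v)$ is stated without justification and really uses $1\in E$ (pairing against $q=1$), whereas your choice of test function $q=f$ makes the step work without that hypothesis, as you correctly point out.
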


\begin{proof}
Let $\Lambda $ be a generic element of $ \Lc_{G,E,\succcurlyeq}$, so that
$\Kc_{G,E,\succcurlyeq} =\Ker H^E_{\Lambda}$,  and let 
$v\in V_\oR(G)$; we show that
$\Ker H^E_{\Lambda} \subset I(v)$. As $\unb_v$, the evaluation at $v$
restricted to $\Span{E\cdot E}$, 
belongs to $\Lc_{G,E,\succcurlyeq}$, we deduce using Proposition \ref{lemkergensubset} that
$\Ker H^E_{\Lambda}  \subset \Ker H^E_{\unb_v} \subset I(v)$. 
This implies $\Ker H^E_{\Lambda}  \subset I(V_\oR(G))=\sqrt[\oR]G$.
\ignore{
we also have $\Lambda':=\Lambda+\Lambda_v\in \Lc_{G,E,\succcurlyeq}$.
Then $\KerE{\Lambda'}= \KE \cap \Ker
\KerE{\Lambda_v} \subset \KE$. Hence equality holds (by
the maximality of the rank of $\HE$) and thus 
$\KE\subset \KerE{\Lambda_v} \subset I(v)$, where the
last inclusion follows using the fact that $1\in E$.
}
\end{proof}

Given a subset $F\subset \PR$ and $t\in\NN$, consider  for $G$ the prolongation 
$\SpanD{F}{2t}$
of $F$ to degree $2t$, and the subspace $E=\PR_t$. For simplicity in the notation we set
\begin{equation}\label{eqKFt}
\Kc_{F,t,\succcurlyeq}:= \Kc_{\SpanD{F}{2t},\PR_t,\succcurlyeq},
\end{equation}
which is thus contained in $\sqrt[\oR]{(F)}$, 
by Theorem \ref{theogenericpositive}.
The next result (from \cite{LLR07}) shows that equality holds for $t$ large enough.
\ignore{
\begin{definition} 
Let $\Kc_{F,t}(\Lambda)=\Kc_{\SpanD{F}{2\,t},\PR_{t}}(\Lambda)$.
\end{definition}
}

\begin{theorem} \label{thmstablerad}\cite{LLR07}
Let $F\subset \PR$. There exists $t_{0}>0$ such that 
$(\Kc_{F,t,\succcurlyeq})=\sqrt[\oR]{(F)}$
for all $t\ge t_0$.
\end{theorem}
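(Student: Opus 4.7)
The plan is to prove both inclusions. The inclusion $(\Kc_{F,t,\succcurlyeq}) \subset \sqrt[\oR]{(F)}$ holds for every $t$, and is immediate from Theorem~\ref{theogenericpositive} applied with $G=\SpanD{F}{2t}$ and $E=\PR_t$, since $\SpanD{F}{2t}\subset (F)$. The substantive direction is the reverse inclusion, for which I would proceed by exhibiting an explicit generating set of $\sqrt[\oR]{(F)}$ contained in $\Kc_{F,t,\succcurlyeq}$ for $t$ large.

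Since $\PR$ is Noetherian, I would write $\sqrt[\oR]{(F)}=(g_1,\ldots,g_s)$ and fix, for each $g_i$, a real Positivstellensatz certificate
\[
g_i^{2m_i}+\sum_j q_{ij}^2 \,=\, \sum_k p_{ik}\, f_{ik},
\]
with $m_i\ge 1$, $f_{ik}\in F$, and $p_{ik},q_{ij}\in \PR$. I would then pick $t_0$ large enough so that, simultaneously for every $i$: $m_i\deg(g_i)\le t_0$, every $q_{ij}$ has degree at most $t_0$, and every summand $p_{ik} f_{ik}$ on the right-hand side has degree at most $2t_0$; the latter ensures that the certificate belongs to the prolongation $\SpanD{F}{2t}$ for all $t\ge t_0$ (not just to the possibly larger set $(F)\cap \PR_{2t}$).

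Fix now $t\ge t_0$ and an arbitrary $\Lambda\in \Lc_{\SpanD{F}{2t},t,\succcurlyeq}$. Applying $\Lambda$ to the certificate and using $\Lambda\succcurlyeq 0$ together with $\Lambda(\SpanD{F}{2t})=\{0\}$ gives $\Lambda((g_i^{m_i})^2)+\sum_j\Lambda(q_{ij}^2)=0$ with all summands non-negative, so each vanishes. Lemma~\ref{lemp2} (applied to the truncated operator) then yields $g_i^{m_i}\in \Ker H^{\PR_t}_\Lambda$. I would then iterate: if $g_i^k\in \Ker H^{\PR_t}_\Lambda$ with $k\ge 2$, then $\Lambda((g_i^{k-1})^2)=\Lambda(g_i^k\cdot g_i^{k-2})=0$, and Lemma~\ref{lemp2} gives $g_i^{k-1}\in \Ker H^{\PR_t}_\Lambda$. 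Descending from $k=m_i$ to $k=1$ yields $g_i\in \Ker H^{\PR_t}_\Lambda$ for every $\Lambda\in \Lc_{\SpanD{F}{2t},t,\succcurlyeq}$. By Proposition~\ref{lemkergensubset}, $\Kc_{F,t,\succcurlyeq}$ equals the intersection of all such kernels, so each $g_i$ lies in $\Kc_{F,t,\succcurlyeq}$, and hence $(g_1,\ldots,g_s)\subset (\Kc_{F,t,\succcurlyeq})$.

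The main obstacle I expect is purely degree bookkeeping. The descent via Lemma~\ref{lemp2} is sound only if each polynomial involved—the powers $g_i^k$ and their pairwise products $g_i^k\cdot g_i^{k-2}$—remains in the domain of the truncated operator, i.e.\ in $\PR_t$ and $\PR_{2t}$ respectively; this is precisely what the bound $m_i\deg(g_i)\le t$ guarantees at every step. Equally critical is the distinction between $(F)\cap \PR_{2t}$ and $\SpanD{F}{2t}$: only elements of the latter are annihilated by $\Lambda\in \Lc_{\SpanD{F}{2t},t,\succcurlyeq}$, so the threshold $t_0$ must be chosen so that the entire Positivstellensatz certificate fits in the prolongation, not merely in the ideal truncated at degree $2t$.
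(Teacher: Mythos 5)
The paper does not prove this theorem; it is stated as a citation to \cite{LLR07} with no argument given (there is even an author note in the source saying ``give a reference, no proof''). Your proof is therefore filling a gap the paper deliberately leaves, and it is correct. The structure --- Noetherianity to reduce to finitely many generators $g_1,\ldots,g_s$ of $\sqrt[\oR]{(F)}$, a real-radical membership certificate $g_i^{2m_i}+\sum_j q_{ij}^2=\sum_k p_{ik}f_{ik}$ for each (this is exactly the definition used in the paper, not even the full Nullstellensatz), a uniform degree bound $t_0$ making every certificate land in $\SpanD{F}{2t_0}\subset\SpanD{F}{2t}$ rather than merely in $(F)_{2t}$, then positivity forcing $\Lambda((g_i^{m_i})^2)=0$, and finally the power descent via the truncated version of Lemma~\ref{lemp2} --- is sound, and the degree bookkeeping is handled properly: $m_i\deg g_i\le t$ guarantees $g_i^k\in\PR_t$ for all $0\le k\le m_i$ and $g_i^{2k}\in\PR_{2t}$, so every intermediate application of the kernel argument stays in range. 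Your appeal to Proposition~\ref{lemkergensubset} to identify $\Kc_{F,t,\succcurlyeq}$ with the intersection of the kernels is also correct (the generic $\Lambda^*$ belongs to $\Lc_{\SpanD{F}{2t},t,\succcurlyeq}$, so one could also just observe $g_i\in\Ker H^{\PR_t}_{\Lambda^*}$ directly).

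Two small remarks that would tighten the write-up. First, the easy inclusion $(\Kc_{F,t,\succcurlyeq})\subset\sqrt[\oR]{(F)}$ actually uses $\sqrt[\oR]{(\SpanD{F}{2t})}\subset\sqrt[\oR]{(F)}$, which is immediate from $\SpanD{F}{2t}\subset(F)$, but it is worth stating since Theorem~\ref{theogenericpositive} is phrased in terms of $(G)$ rather than $(F)$. Second, you call the membership relation a ``Positivstellensatz certificate''; in this paper's conventions it is simply the defining condition for $\sqrt[\oR]{(F)}$, so no deep positivity theorem is being invoked --- this makes your argument even more elementary than the phrasing suggests. As far as I can tell, this is essentially the same mechanism as the proof in the reference \cite{LLR07}, so it is a faithful reconstruction of the missing proof rather than a genuinely different route.
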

\noteML{Let's just give a reference, no proof.}
\ignore{
TO-BE-COMPLETED 
\note{This theorem also only holds for positive Hankel operators. For the general case we can only say that it converges to a maximum Gorenstein factor of $I$ (like in \cite{JFSMR08}).}
\end{proof}
}

\ignore{
An element $\Lambda\in \Lc_{G,E,\succeq}$ which satisfies these
properties is called a {\em generic element} of $\Lc_{G,E}$ (resp. $\Lc_{G,E,\succeq}$).
This implies that the kernels $\Kc_{E}(\Lambda)$ for all
the generic elements $\Lambda$ are identical. According to this proposition, we can define 
\begin{eqnarray*}
\Kc_{G,E} &:=&\Kc_{E}(\Lambda)\ \mathrm{for\ a\ generic}\ \Lambda \in
\Lc_{G,E}. \\
\Kc_{G,E,\succeq} &:=&\Kc_{E}(\Lambda)\ \mathrm{for\ a\ generic}\ \Lambda\in \Lc_{G,E,\succeq}.
\end{eqnarray*}
}

\section{Algorithm}\label{sec:5}
In this section, we describe the new algorithm to 
compute the (real) radical of an ideal. But before, we recall the graded moment
matrix approach for computing the real radical developed
in \cite{LLR08b}, and the border basis algorithm developed in \cite{Mourrain2005}. 

\subsection{The graded moment matrix algorithm}
In the graded approach, the following family of spaces is considered:
\begin{eqnarray*}
\Lc_{F,t,\succeq} &:= &\Lc_{\SpanD{F}{2\,t}, \PR_{t},\succeq} \\
 &= &\{\Lambda\in
\PR_{2\,t}\, \mid\, \forall f \in \SpanD{F}{2\,t}, \Lambda(f)=0
\, \mathrm{and}\, \forall p\in \PR_{t}, \Lambda(p^{2}) \geq 0 \}. 
\end{eqnarray*}
For $\Lambda \in \Lc_{F,t,\succeq}$, let $H_{\Lambda}^{t}:=
H_{\Lambda}^{\PR_{t}}$.

Algorithm \ref{algo:GRR} presents the graded moment matrix
algorithm described in \cite{LLR07}.

\begin{algorithm2e}[ht]\caption{\textsc{Graded Real Radical}}\label{algo:GRR}
\KwIn{a finite family $F$ of polynomials of $\PR$.}
Set $t:=1$ and $\delta=\max\{\deg(f),f\in F\}$;
\begin{enumerate}
 \item Choose a generic $\Lambda$ in $\Lc_{F,t,\succeq}$;
 \item Check wether $\rank H_{\Lambda}^{s} = \rank H_{\Lambda}^{s+1}$
   for some $s$ such that $\delta\leq s<t$;
 \item If not, increase $t:=t+1$ and repeat from step (1);
 \item Compute $\Ker H_{\Lambda}^{s}$; 
\end{enumerate}
\KwOut{$\sqrt[\oR]{(F)}=(\ker H_{\Lambda}^{s})$.}
\end{algorithm2e}

This algorithm requires in the first step to solve semi-definite
programming problems on
matrices of size the number of all monomials in degree $t$. This
number is growing very quickly with the degree when the number of
variables is important, which significantly slows down the performance of the
method when several loops are necessary.  The extension to compute the
radical is also possible with this approach by doubling the variables
and by embedding the problem over $\oC^{n}$ in $\oR^{2\,n}$. The
correctness of the algorithm relies on Theorem \ref{thmstablerad} 
which comes from \cite{LLR07}.

\subsection{The border basis algorithm}
Algorithm \ref{algo:BB} presents the border basis algorithm described
in \cite{Mourrain2005}. Hereafter, we analyze shortly the different steps.
\begin{algorithm2e}[ht]\caption{\textsc{Border Basis}}\label{algo:BB}
\KwIn{a family $F$ of polynomials of $\PK$.}
Set $t:=0$, $\BB:=\{1\}$, $G:=\emptyset$ and $\delta=\max\{\deg(f),f\in F\}$;
\begin{enumerate}
 \item Compute the reduction $\tilde{F}$ of $F_{t+1}$ on $\<\BB\>_{t+1}$ with
   respect to $G$;
 \item Set $t':=\min\{\deg(p), p\in \tilde{F}, p\neq 0\}-1$ ;
 \item Compute a minimal $\tilde{G}$ such that
   $\Span{\tilde{G}}:=\Span{G^{+},\tilde{F}} \cap \Span{\BB^{+}}_{t'+1}$; 
 \item Set $t''=\min\{\deg(p), p \in  {\tilde{G}}\cap \Span{\BB},
   p\neq 0\}-1$;
 Compute $\tilde{\BB}$ connected to $1$ 
such that $\Span{\BB^{+}}_{t''+1}:=\Span{\tilde{\BB}}_{t''+1}\oplus \Span{\tilde{G}}_{t''+1}$;
 \item Compute a rewriting family $G''$ of $\tilde{G}_{t''+1}$ with respect to
   $\tilde{\BB}_{t''+1}$;
 \item If $G''\neq G$ or $\tilde{\BB}\neq \BB$ or $t''<\delta$ then set $t:=t''+1$, $\BB:=\tilde{\BB}$, $G:= G''$ and
   repeat from step (1);
\end{enumerate}
\KwOut{the border basis $G$ of $(F)$ with respect to $\BB$.}
\end{algorithm2e}

In step (1), the reduction of a polynomial $p$ by a rewriting family
$G$ for a set $\BB$ consists of the following procedure: 
For each monomial $\xx^{\alpha}$ of the support of $p$ which is of the form 
$\xx^{\alpha} = x_{i}\, \xx^{\alpha'}\, \xx^{\alpha''}$ 
with $\xx^{\alpha'} \in \BB$ and $\xx^{\alpha''}$ of the smallest possible degree, 
if there exists an element $g=x_{i}\, \xx^{\alpha'} - r \in G$ with 
$r \in \Span{\BB}$, then the monomial $\xx^{\alpha}$ is replaced by $\xx^{\alpha''} \, r$.
This is repeated until all monomials of the remainder are in $\BB$.


Step (3) consists of the following steps:  take the coefficient matrix
$M=[M_{0}|M_{1}]$ of the polynomials in $G^{+} \cup \tilde{F}$ where
the block $M_{0}$ is indexed by the monomials in
$\partial \BB^{+}$ and the block $M_{1}$ is indexed by the monomials
in $\BB$ for a given ordering of the monomials, 
compute a row-echelon reduction $\tilde{M}$ of $M$,
and deduce the polynomials of $\tilde{G}$ corresponding to the non-zero rows of
$\tilde{M}$. For $p\in \tilde{G}$ corresponding to a non-zero row of
$\tilde{M}$, the monomial indexing its first non-zero coefficients is
denoted $\gamma(p)$. Notice that
$\Span{\tilde{G}}:=\Span{G^{+},\tilde{F}} \cap \Span{\BB^{+}}_{t'+1}$
contains the elements of $C^+(G_{t'})$.

Step (4) consists 
\begin{itemize}
\item of removing the monomials $\gamma(p)$ for $p\in  \Span{\tilde{G}_{t''+1}}\cap
\Span{\BB}_{t''+1}$, and 
 \item of adding the monomials in $\partial \BB \setminus
   \{\gamma(p)\mid 
 p \in \tilde{G}\}$ of degree $\leq t''+1$.
\end{itemize}

Step (5) consists of auto-reducing the polynomials $p\in \tilde{G}$ of degree $\leq
t''$ so that $\gamma(p)$ is the only term of $p$ in $\partial
\tilde{\BB}$. This is done by inverting the coefficient matrix of $\tilde{G}$ with
respect to the monomials in $\partial \tilde{\BB}$. Notice that as
$\Span{\BB^{+}}_{t''+1}:=\Span{\tilde{\BB}}_{t''+1}\oplus\Span{\tilde{G}}_{t''+1}$,
$\tilde{G}$ is complete in degree $t''+1$.

In step (6), if the test is valid then the loop start again with $G$ a
rewriting family of degree $t$
with respect to $\BB$, which is by definition included in
$\<\BB^{+}\>_{t}$. Thus, at each loop, $\tilde{G}$ contains $G$ 
and $C^{+}(G) \subset \Span{G^{+}}\cap
\Span{\BB^{+}}_{t+1}\subset \Span{\tilde{G}}$. 

The algorithm stops if $G''= G$ and $\tilde{\BB}= \BB$ and $t\ge \delta$. 
Then $t''=t$, $\tilde{G}=G$ and 
$C^{+}(G)\subset \tilde{G}=G$ is reduced to $0$ by $G$. If $G$ is a rewriting
family complete in degree $t$ for $\BB$, we deduce by Theorem \ref{thmnfdegd} that 
$\pi_{G,\BB}$ is the projection of $\PK_{t}$ on $\Span{\BB}_{t}$ along
$\SpanD{G}{t}$. As $\tilde{\BB}= \BB$, we also have $t\ge \max
\{\deg(b)\mid b \in \BB\}$ so that $G$
is a border basis with respect to $\BB$. As $t\ge \delta$, the elements of $F$ reduce
to $0$ by $G\subset F$. Thus $(G)=(F)$.

It is proved in \cite{Mourrain2005} that this algorithm stops when the ideal $(F)$ is
zero-dimensional. Thus its output $G$ is the border basis of the ideal $(F)$ with
respect to $\BB$.




\subsection{$\oK$-Radical Border Basis algorithm}
Our new radical border basis algorithm can be seen as a combination of the graded
real radical algorithm and the border basis algorithm. The 
modification of the border basis algorithm consists essentially of generating new
elements of the (real) radical of the ideal at each loop (step $(1')$ in Algorithm
\ref{algo:RBB}), and to use these new relations (which are in the
(real) radical by Theorem \ref{theogeneric} and Theorem \ref{theogenericpositive}) in step (3). 
In the case when $\oK=\oC$, a final stage is added to get the generators of the radical of a Gorenstein 
ideal (step (7) below).

\begin{algorithm2e}[ht]\caption{\textsc{$\oK$-Radical Border Basis}}\label{algo:RBB}
\KwIn{a family $F$ of polynomials of $\PK$.}

Set $t=0$, $\BB=\{1\}$, and $G=\emptyset$;
\begin{enumerate}
\item[($1'$)] {\em Compute 
a (maximal) $S \subset \BB_{t+1}$ such that $S\cdot S$ can be reduced by $G$ onto $\BB_{t+1}$
and $K :=$ \textsc{GenericKernel}$_{\oK}(G,\BB,S)$;}
 \item Compute the reduction $\tilde{F}$ of $F_{t+1}$ on $\<\BB^{+}\>_{t+1}$ with respect to $G$;
 \item {\em Set $t':=\min\{\deg(p), p\in \tilde{F} \cup K, p\neq 0\}-1$;}
 \item {\em Compute $\tilde{G}$ such that $\Span{\tilde{G}}:=\Span{G^{+},\tilde{F}, K} \cap \Span{\BB^{+}}_{t'+1}$;}
 \item Compute $\tilde{\BB}$ connected to $1$ and $t''\le t'$ maximal such that $\Span{\BB^{+}}_{t''+1}:=\Span{\tilde{\BB}}_{t''+1}\oplus \Span{\tilde{G}}_{t''+1}$;
 \item Compute a rewriting family $G''$ of $\tilde{G}_{t''+1}$ with respect to $\tilde{\BB}$;
 \item If $G''\neq G$ or $\tilde{\BB}\neq \BB$ or $t''<\delta$ then set $t:=t''+1$, $\BB:=\tilde{\BB}$, $G:= G''$ and
   repeat from step (1);
 \item {\em if $\oK = \oC$ then $[G,\BB] :=$ \textsc{Socle}$(G,\BB,\Lambda)$;}
\end{enumerate}
\KwOut{The border basis $G$ of the ideal $\sqrt[\oK]{(F)}$ with respect to $\BB$.}
\end{algorithm2e}
 
The two new ingredients that we describe below are the function 
\textsc{GenericKernel} (see Algorithm \ref{algo:GenericKernel}) used
to generate new polynomials in the (real) radical, and the function
\textsc{Socle} (see Algorithm \ref{algo:Socle}) 
 which computes the generators of the radical
from the border basis of a Gorenstein ideal when $\oK=\oC$.

\begin{definition}\label{def:fred}
Given a rewriting family $F$ with respect to $\BB$ and $S=\{\xx^{\beta_1},
\ldots, \xx^{\beta_l} \}$, we define $\Fred$ as the following family of polynomials :
For all $\xx^{\beta_i}, \xx^{\beta_j} \in S$ such that 
$\pi_{F,\BB} ( \xx^{\beta_i + \beta_j})$ exists and is in $\Span{S\cdot S}$, we define
$\kappa_{\beta_i + \beta_j}(\xx) = \xx^{\beta_i + \beta_j} - 
\pi_{F,\BB}(\xx^{\beta_i + \beta_j})$ and $\kappa_{\beta_i + \beta_j} = 0$
otherwise. 
\end{definition} 
With $\Fred$ as in Definition \ref{def:fred}, we are going to analyze the corresponding spaces $\Lc_{\Fred, S}$, 
$\Lc_{\Fred, S, \succeq}$, $\Kc_{\Fred, S}$, $\Kc_{\Fred, S,
  \succeq}$.  Notice that by construction $\Fred\subset\SpanD{F}{t}$
where $t= 2 \max \{ \deg(s) \mid s \in S \}$.

The construction of the generic kernel $\Kc_{\Fred,S}$ (resp., $\Kc_{\Fred,S,
  \succcurlyeq}$) is implemented by Algorithm
\ref{algo:GenericKernel}. 
This routine is the one that is executed for finding effectively new
equations in the (real) radical.

\begin{algorithm2e}[ht]\caption{\textsc{GenericKernel$_{\oK}(F,\BB,S)$}}\label{algo:GenericKernel}
\KwIn{A rewriting family $F$ with respect to $\BB$ allowing reduction for all the
  monomials in $S\cdot S$.}
\begin{enumerate}
\item If $\oK=\oC$, we construct an element $\Lambda \in \Lc_{\Fred, S}$
  such that $H_{\Lambda}^S$ has maximal rank,
by taking a generic element of the linear space $\Lc_{\Fred, S}$.
\item If $\oK=\oR$, we construct an element of $\Lambda \in \Lc_{\Fred, S,\succcurlyeq}$ such
that $H_{\Lambda}^S$ has maximal rank, by computing an element in
the relative interior of the feasible region
of the following {\em semi-definite programming problem}:
\begin{itemize}
\item[--]  $H= (h_{\alpha,\beta})_{\alpha,\beta\in S} \succcurlyeq 0$
\item[--]  $H$ satisfies the Hankel constraints
$h_{0,0}=1$, $h_{\alpha,\beta}=h_{\alpha',\beta'}$ if $\alpha+\beta=\alpha'+\beta'$.
\item[--]  $H$ satisfies the linear constraints $\sum_{\alpha} h_{\alpha}
  \kappa_{\beta,\alpha}=0$ 
for all $\beta \in S\cdot S$ such that $\kappa_{\beta} = \sum_{\alpha}
\kappa_{\beta,\alpha}\, \xx^{\alpha} \neq 0$.
\end{itemize}
\item Then we compute $K$ as a basis of the kernel of $H_{\Lambda}^{S}$.
\end{enumerate}
\KwOut{A family $K$ of polynomials in $\sqrt[\oK]{(F)}$.}
\end{algorithm2e}

Notice that primal-dual interior
 point solver implementing a self dual embedding do return such a solution
 automatically. For a remark on how to use other solvers, see \cite[Remark
   4.15]{LLR07}. 


\begin{algorithm2e}[ht]\caption{\textsc{Socle$(G,\BB,\Lambda)$}}\label{algo:Socle}
\KwIn{A border basis $G$ for $\BB$ connected to $1$ and $\Lambda \in \Span{\BB\cdot
  \BB}^{\ast}$ such that $H_{\Lambda}^{\BB}$ is invertible.}
\begin{enumerate}
\item Compute a dual basis of $\BB=\{b_{1},\ldots, b_{r}\}$ as follows:
$[d_{1},\ldots, d_{r}] = H^{-1} [b_{1},\ldots,
  b_{r}]$ where $H=(\Lambda(b_{i}b_{j}))_{1\le i,j \le r}$ is the
  matrix of  $H_{\Lambda}^{\BB}$;
\item Compute $\Delta =\sum_{i=1}^{r} b_{i}\, d_{i}$ and the matrix 
  $H_{\Delta}=(\Lambda(\Delta\, b_{i}\,b_{j}))_{1\le i,j \le r}$ by reduction of the
  elements $\Delta\, b_{i}\,b_{j}$ by $G$ to linear combinations of elements
  in $\BB$;
\item Compute $G'=\ker H_{\Delta}$ and apply the normal form algorithm to
  $G'\cup G$ in order to deduce a basis $\tilde{\BB}\subset \BB$ connected to $1$ and a border
  basis $G''$ for $\tilde{\BB}$ such that $(G'') = (G'\cup G)=\sqrt{F}$.
\end{enumerate}
\KwOut{A basis $\tilde{\BB}$ connected to $1$ and a border basis $G''$ of $\sqrt{(F)}$
  for $\tilde{\BB}$.}
\end{algorithm2e}

\section{Correctness of the algorithms}\label{sec:6}
In this section, we analyse separately the correctness of the algorithm 
over $\oR$ and $\oC$.
\subsection{Correctness for real radical computation}
We prove first the correctness of Algorithm \ref{algo:RBB} over $\oR$. 
\begin{lemma}\label{lemkerequiv} 
If $G$ is a rewriting family complete in degree $2\,t$ for $\BB$, then for $\Lambda\in
\SpanD{G}{2\,t}^{\bot}$
\[
\Ker H_{\Lambda}^{\PK_{t}} \equiv \Ker H_{\Lambda}^{\BB_{t}} \mod \SpanD{G}{t}.
\]
\end{lemma}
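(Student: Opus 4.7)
The plan is to unfold the statement $\Ker H_{\Lambda}^{\PK_t} \equiv \Ker H_{\Lambda}^{\BB_t} \bmod \SpanD{G}{t}$ as the pair of inclusions
\[
\Ker H_{\Lambda}^{\BB_t}\ \subseteq\ \Ker H_{\Lambda}^{\PK_t}\ \subseteq\ \Ker H_{\Lambda}^{\BB_t}+\SpanD{G}{t},
\]
and prove each using the normal form map $\piFB$ of Definition~\ref{defpiBF} (which is well defined on $\PK_t$ since $G$ is complete in degree $2t$, hence a fortiori in degree $t$), together with the inductive observation from the proof of Theorem~\ref{thmcom} that $p-\piFB(p)\in\SpanD{G}{s}$ whenever $p\in \PK_s$.

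First I would record the easy key fact that $\SpanD{G}{t}\subseteq \Ker H_{\Lambda}^{\PK_t}$. Indeed, if $h=\sum_{f\in G_t}p_f f$ with $\deg(p_f)\le t-\deg(f)$, then for any $r\in\PK_t$ we have $hr=\sum_f (rp_f)f$ with $\deg(rp_f)\le 2t-\deg(f)$, so $hr\in\SpanD{G}{2t}$, and $\Lambda(hr)=0$ by hypothesis. This is precisely the place where the completeness assumption at degree $2t$ is used: it guarantees that $\Lambda\in\SpanD{G}{2t}^{\bot}$ does its job against products of degree up to $2t$.

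Next I would prove $\Ker H_{\Lambda}^{\BB_t}\subseteq \Ker H_{\Lambda}^{\PK_t}$. Given $q\in\Ker H_{\Lambda}^{\BB_t}\subseteq \Span{\BB}_t$ and $r\in\PK_t$, decompose $r=\piFB(r)+(r-\piFB(r))$ with $\piFB(r)\in\Span{\BB}_t$ and $r-\piFB(r)\in\SpanD{G}{t}$. Then $\Lambda(qr)=\Lambda(q\,\piFB(r))+\Lambda(q(r-\piFB(r)))$; the first term vanishes by definition of $q\in\Ker H_{\Lambda}^{\BB_t}$, and the second vanishes because $q\cdot(r-\piFB(r))\in\Span{\BB}_t\cdot\SpanD{G}{t}\subseteq \SpanD{G}{2t}$ (again checking degrees on each summand $q p_f f$), so $\Lambda$ kills it.

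For the reverse inclusion, let $p\in \Ker H_{\Lambda}^{\PK_t}$ and write $p=\piFB(p)+(p-\piFB(p))$. The second summand belongs to $\SpanD{G}{t}$, and by the first step it lies in $\Ker H_{\Lambda}^{\PK_t}$; hence so does $\piFB(p)\in\Span{\BB}_t$. Restricting the test polynomials $r\in\PK_t$ to $\Span{\BB}_t\subseteq\PK_t$ immediately gives $\piFB(p)\in\Ker H_{\Lambda}^{\BB_t}$, so $p\equiv\piFB(p)\pmod{\SpanD{G}{t}}$ with $\piFB(p)\in\Ker H_{\Lambda}^{\BB_t}$, completing the proof.

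The only subtle point, which I would flag but not belabor, is bookkeeping degrees of the rewriting-family expressions: one uses $\SpanD{G}{t}$ to host the ``error'' $p-\piFB(p)$ on polynomials of degree $\le t$, but $\SpanD{G}{2t}$ to host the products that $\Lambda$ must annihilate. This is precisely why the hypothesis pairs completeness in degree $2t$ with $\Lambda\in\SpanD{G}{2t}^{\bot}$; no commutation relation for $\piFB$ is needed since we only use the membership $m-\piFB(m)\in\SpanD{G}{\deg m}$, which comes from the inductive construction of $\piFB$ alone.
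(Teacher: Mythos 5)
Your proof is correct and follows essentially the same approach as the paper's: both reduce the test polynomials to their normal forms under $\piFB$ and rely on the degree bookkeeping $\Span{\BB}_t\cdot\SpanD{G}{t}\subseteq\SpanD{G}{2t}$ and $\PK_t\cdot\SpanD{G}{t}\subseteq\SpanD{G}{2t}$, which the hypothesis $\Lambda\in\SpanD{G}{2t}^{\bot}$ then kills. The paper phrases this as a terse chain of equivalences (replacing $p$ by $\piFB(p)$ and then $q$ by $\piFB(q)$), while you spell out the same content as a pair of inclusions together with the observation $\SpanD{G}{t}\subseteq\Ker H_\Lambda^{\PK_t}$; the substance is identical.
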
 
\begin{proof}
For $\Lambda\in \SpanD{G}{2\,t}^{\bot}$, we have
\begin{eqnarray*}
p\in\Ker H_{\Lambda}^{\PK_{t}} &\Leftrightarrow & 
\Lambda(p\,q)=0\ \forall q\in \PK_{t} \\
 &\Leftrightarrow & \Lambda(b\,q)=0\ \forall q\in \PK_{t}\ 
\mathrm{where}\ b\in \Span{\BB}_{t}=p
\mod \SpanD{G}{t}\\
 &\Leftrightarrow & \Lambda(b\,b')=0\  \forall b\in \Span{\BB}_{t}\\
 &\Leftrightarrow & b\in \Ker H_{\Lambda}^{\BB_{t}}.
\end{eqnarray*}
Therefore 
\[\Ker H_{\Lambda}^{\PK_{t}} \equiv \Ker H_{\Lambda}^{\BB_{t}} \mod
\SpanD{G}{t},\]
which proves the equality of the two kernels modulo $\SpanD{G}{t}$.
\end{proof}

\begin{lemma}\label{lemsumdeg}
If $G$ is a rewriting family complete in degree $2\,t$ for $\BB$, such that 
$\PK_{2\,t}=\Span{\BB}_{2\,t} \oplus \SpanD{G}{2\,t}$, then 
\[ 
\SpanD{\Kc_{G,t,\succeq}}{t} \equiv \SpanD{\Kc_{\Gred,\BB_{t},\succeq}}{t}
 \mod \SpanD{G}{t}. 
\]
\end{lemma}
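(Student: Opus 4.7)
The plan is to reduce to a kernel-level equality in $\PR_{t}$ using Lemma~\ref{lemkerequiv}, transfer the generic positive kernel from $\Lc_{G,t,\succeq}$ to $\Lc_{\Gred,\BB_{t},\succeq}$ via a restriction map, and finally lift the set-level congruence to the $\SpanD{\cdot}{t}$ closures.

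First, I would choose a generic $\Lambda^{*}\in\Lc_{G,t,\succeq}$, so that $\Kc_{G,t,\succeq}=\Ker H_{\Lambda^{*}}^{\PR_{t}}$. Since $\Lambda^{*}\in\SpanD{G}{2t}^{\bot}$ and $G$ is complete in degree $2t$, Lemma~\ref{lemkerequiv} gives $\Kc_{G,t,\succeq}\equiv \Ker H_{\Lambda^{*}}^{\Span{\BB_{t}}}\mod\SpanD{G}{t}$. Next, I introduce the restriction map $\rho\colon\Lc_{G,t,\succeq}\to\Lc_{\Gred,\BB_{t},\succeq}$, $\Lambda\mapsto\Lambda|_{\Span{\BB_{t}\cdot\BB_{t}}}$. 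It is well-defined, since each $\kappa=\xx^{\beta_{i}+\beta_{j}}-\pi_{G,\BB}(\xx^{\beta_{i}+\beta_{j}})$ in $\Gred$ lies in $\SpanD{G}{2t}$ and positivity on $\PR_{t}\supseteq\Span{\BB_{t}}$ is inherited. Setting $\mu^{*}:=\rho(\Lambda^{*})$, the Hankel matrices $H_{\Lambda^{*}}^{\Span{\BB_{t}}}$ and $H_{\mu^{*}}^{\Span{\BB_{t}}}$ coincide, so their kernels agree.

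The central step, and the main obstacle, is to show that $\mu^{*}$ is generic in $\Lc_{\Gred,\BB_{t},\succeq}$, whence by Proposition~\ref{lemkergensubset} one has $\Ker H_{\mu^{*}}^{\Span{\BB_{t}}}=\Kc_{\Gred,\BB_{t},\succeq}$. I plan to do this by proving $\rho$ is surjective: given $\mu\in\Lc_{\Gred,\BB_{t},\succeq}$, I will construct $\Lambda\in\Lc_{G,t,\succeq}$ using the decomposition $\PR_{2t}=\Span{\BB}_{2t}\oplus\SpanD{G}{2t}$, by setting $\Lambda$ to vanish on $\SpanD{G}{2t}$ and to agree with an extension $\tilde\mu$ of $\mu$ on $\Span{\BB}_{2t}$. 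The values of $\tilde\mu$ on $\BB_{2t}\cap(\BB_{t}\cdot\BB_{t})$ are forced by $\mu$, while those on $\BB_{2t}\setminus(\BB_{t}\cdot\BB_{t})$ are free parameters to be chosen so that the restriction $\rho(\Lambda)=\mu$ holds on the remaining monomials of $\BB_{t}\cdot\BB_{t}$ outside $\BB_{2t}$. Positivity of $\Lambda$ on $\PR_{t}$ then follows automatically, since for $p\in\PR_{t}$ the decomposition $p=\pi_{G,\BB}(p)+r$ with $\pi_{G,\BB}(p)\in\Span{\BB_{t}}$ and $r\in\SpanD{G}{t}$ yields $p^{2}\equiv\pi_{G,\BB}(p)^{2}\mod\SpanD{G}{2t}$, so that $\Lambda(p^{2})=\mu(\pi_{G,\BB}(p)^{2})\ge 0$.

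Finally, the passage from the set-level congruence $\Kc_{G,t,\succeq}\equiv\Kc_{\Gred,\BB_{t},\succeq}\mod\SpanD{G}{t}$ to the $\SpanD{\cdot}{t}$ statement is routine. Given $k\in\Kc_{G,t,\succeq}$ of degree $d\le t$, the previous steps produce $k=k'+g$ with $k'\in\Kc_{\Gred,\BB_{t},\succeq}$ of degree $\le d$ (degree preservation of $\pi_{G,\BB}$) and $g\in\SpanD{G}{d}$; multiplying by any $p\in\PR_{t-d}$ yields $pk=pk'+pg$ with $pk'\in\SpanD{\Kc_{\Gred,\BB_{t},\succeq}}{t}$ and $pg\in\SpanD{G}{t}$. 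The reverse inclusion is symmetric. The bulk of the work lies in the surjectivity of $\rho$: one must reconcile the two cases of Definition~\ref{def:fred} (whether $\pi_{G,\BB}(\xx^{\beta_{i}+\beta_{j}})\in\Span{\BB_{t}\cdot\BB_{t}}$) with the moment constraints coming from $\SpanD{G}{2t}$, and verify that the free values on $\BB_{2t}\setminus(\BB_{t}\cdot\BB_{t})$ can indeed be tuned consistently to recover any prescribed $\mu$.
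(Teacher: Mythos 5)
Your overall architecture is close to the paper's: both proofs hinge on Lemma~\ref{lemkerequiv} and the decomposition $\PK_{2t}=\Span{\BB}_{2t}\oplus\SpanD{G}{2t}$ to pass between kernels of $H_\Lambda^{\PK_t}$ and $H_\Lambda^{\BB_t}$ modulo $\SpanD{G}{t}$. The difference is that the paper does \emph{not} try to show a single generic $\Lambda^*\in\Lc_{G,t,\succeq}$ restricts to a generic $\mu^*\in\Lc_{\Gred,\BB_t,\succeq}$. Instead it runs two separate one-sided arguments: (a) pick a generic $\Lambda$ for $\Lc_{G,t,\succeq}$, note its restriction lies in $\Lc_{\Gred,\BB_t,\succeq}$ (possibly non-generically), and invoke Proposition~\ref{lemkergensubset}(ii) to get $\Ker H_\Lambda^{\BB_t}\supset\Kc_{\Gred,\BB_t,\succeq}$; (b) pick a generic $\Lambda$ for $\Lc_{\Gred,\BB_t,\succeq}$, extend it to $\tilde\Lambda\in\SpanD{G}{2t}^\bot$, and again invoke Proposition~\ref{lemkergensubset}(ii) for the reverse inclusion. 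Your reformulation via a restriction map $\rho$ and its surjectivity would, if completed, yield the same conclusion and is a legitimate alternative framing, but surjectivity is a stronger claim than the paper's (the paper needs to extend only one particular $\Lambda$, not all of them).

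The genuine issue is that you explicitly leave the key step unresolved. You write that ``the bulk of the work lies in the surjectivity of $\rho$'' and flag the need to reconcile the two cases in Definition~\ref{def:fred} with the constraints from $\SpanD{G}{2t}$, but you never close this. For an extension $\Lambda$ of $\mu$ with $\Lambda\in\SpanD{G}{2t}^\bot$ to exist, one needs $\mu$ to vanish on all of $\SpanD{G}{2t}\cap\Span{\BB_t\cdot\BB_t}$, whereas membership in $\Lc_{\Gred,\BB_t,\succeq}$ only forces vanishing on $\Span{\Gred}$, which may be a proper subspace (precisely because Definition~\ref{def:fred} sets $\kappa_{\beta_i+\beta_j}=0$ whenever $\pi_{G,\BB}(\xx^{\beta_i+\beta_j})\notin\Span{S\cdot S}$, discarding those relations). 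So a linear combination $\sum c_m m\in\Span{\BB_t\cdot\BB_t}$ lying in $\SpanD{G}{2t}$ need not lie in $\Span{\Gred}$, and there is no a priori reason $\mu$ annihilates it. This is exactly the obstruction your plan needs to defeat, and without a concrete argument it remains a gap. (The paper's own justification — that $\Span{\Gred}\subset\SpanD{G}{2t}$ implies an extension exists — is itself terse on this same point, but that does not discharge the step in your write-up.) The rest of your outline — well-definedness of $\rho$, the positivity inheritance via $p^2\equiv\pi_{G,\BB}(p)^2\bmod\SpanD{G}{2t}$, and the final passage from the kernel congruence to the $\SpanD{\cdot}{t}$ congruence — is correct and matches the paper's intent.
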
 
\begin{proof}
Let $\Lambda\in \SpanD{G}{2\,t}^{\bot}$ be a generic element such that 
$\Kc_{G,t,\succeq} = \Ker H_{\Lambda}^{\PK_{t}}$. By Lemma
\ref{lemkerequiv} and Proposition~\ref{lemkergensubset}, we have  
\[
\Kc_{G,t,\succeq} = \Ker H_{\Lambda}^{\PK_{t}} \equiv \Ker H_{\Lambda}^{\BB_{t}} \mod \SpanD{G}{t}
\supset \Kc_{\BB_{t},\succeq} \mod \SpanD{G}{t}.
\]
Conversely, let $\Lambda\in \Span{\Gred}^{\bot}$ be a generic element such that 
$ \Kc_{G,\BB_{t},\succeq} = \Ker H_{\Lambda}^{\BB_{t}}$. As $\Span{\Gred}
\subset \SpanD{G}{2\,t}$, there exists
$\tilde{\Lambda}\in \SpanD{G}{2\,t}^{\bot}$ which extends $\Lambda$ to
$\PK_{t}$. Then we have  
\begin{eqnarray*}
\Kc_{G,\BB_{t},\succeq} &= &\Ker H_{\Lambda}^{\BB_{t}} = \Ker H_{\tilde{\Lambda}}^{\BB_{t}} \\
 &\equiv & \Ker H_{\tilde{\Lambda}}^{\PK_{t}} \mod \SpanD{G}{t}
\supset \Kc_{G,t,\succeq} \mod \SpanD{G}{t}. 
\end{eqnarray*}
\note{I don't understand this proof. Where do you address the fact that we
  may have $G^r \subsetneq G$?}
\noteBM{Split the proof in two lemmas; we use the reduction by $G$ ie. by 
  $\Gred \subset \SpanD{G}{t}$}
\end{proof}


\begin{lemma}\label{lemfixpt}  
If Algorithm~\ref{algo:RBB} terminates with outputs $G$ and $\BB$, then $(G)=\sqrt[\oR]{(F)}$
and $\BB$ is a basis of $\PK/\sqrt[\oR]{(F)}$.
\end{lemma}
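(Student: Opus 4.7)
The plan is to establish the two inclusions $(G)\subseteq \sqrt[\oR]{(F)}$ and $\sqrt[\oR]{(F)}\subseteq (G)$ separately, using the termination condition in two ways: first to conclude that $G$ is a border basis for $\BB$ (so $\PK=\Span{\BB}\oplus(G)$), and second to conclude that the generic-kernel step has reached a fixed point modulo $G$.

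First I would observe that the termination conditions $G''=G$, $\tilde{\BB}=\BB$ and $t''\ge\delta$ force $G$ to be a graded rewriting family, complete in all relevant degrees, whose commutation polynomials $C^{+}(G)$ reduce to $0$ modulo $G$. Combined with $\BB$ being connected to $1$, Theorem~\ref{thmnfanyt} then yields that $G$ is a border basis of $(G)$, so $\PK=\Span{\BB}\oplus(G)$ and $(G)_{t}=\SpanD{G}{t}$ for every $t$. In particular $(G)$ is zero-dimensional and $\BB$ is a basis of $\PK/(G)$. Moreover, since every $f\in F$ reduces to $0$ by the border basis $G$, we have $(F)\subseteq (G)$ and $\SpanD{F}{2t}\subseteq \SpanD{G}{2t}$ for every $t$.

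For the inclusion $(G)\subseteq \sqrt[\oR]{(F)}$ I would argue by induction on the main loop of Algorithm~\ref{algo:RBB}. Between two successive iterations, the polynomials added to $G$ come from only two sources. The first is the reduction $\tilde F$ of $F_{t+1}$ modulo the current $G$, which lies in $(F)+(G_{\text{prev}})\subseteq \sqrt[\oR]{(F)}$ by the induction hypothesis. The second is the set $K=\textsc{GenericKernel}_{\oR}(G_{\text{prev}},\BB_{\text{prev}},S)$, which by construction lies in $\Kc_{\Gred_{\text{prev}},S,\succeq}$; Theorem~\ref{theogenericpositive} gives $K\subseteq \sqrt[\oR]{(\Gred_{\text{prev}})}\subseteq \sqrt[\oR]{(G_{\text{prev}})}\subseteq \sqrt[\oR]{(F)}$. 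Since the step~(3)--(6) operations only take $\oK$-linear combinations of $G^+$, $\tilde F$ and $K$, the induction carries through.

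For the reverse inclusion, the fixed-point condition means that at the terminal $(G,\BB)$ the generic kernel $\Kc_{\Gred,\BB_{t},\succeq}$ is absorbed by $\SpanD{G}{t}$ modulo $G$; by Lemma~\ref{lemsumdeg} this lifts to $\Kc_{G,t,\succeq}\subseteq \SpanD{G}{t}=(G)_{t}$. Using $\SpanD{F}{2t}\subseteq \SpanD{G}{2t}$ one gets $\Lc_{G,t,\succeq}\subseteq \Lc_{F,t,\succeq}$, and Lemma~\ref{lemsubsetker} then gives $\Kc_{F,t,\succeq}\subseteq \Kc_{G,t,\succeq}\subseteq (G)$. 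Because $G$ is a border basis, the same argument runs verbatim at every degree $t'\ge t$, so choosing $t'\ge t_{0}$ from Theorem~\ref{thmstablerad} applied to $F$ yields $\sqrt[\oR]{(F)}=(\Kc_{F,t',\succeq})\subseteq (G)$.

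The main obstacle is exactly this last propagation: one must justify that the fixed-point information obtained at the terminal degree $t$ extends to all $t'\ge t_{0}$. This boils down to the fact that once $G$ is a border basis of $(G)$, every truncated Hankel kernel $\Kc_{G,t',\succeq}$ is, via Lemma~\ref{lemsumdeg}, equivalent modulo $\SpanD{G}{t'}$ to a kernel inside the finite-dimensional fixed space $\Span{\BB}$; its stability across $t'$ is therefore a finite-dimensional statement in $\PK/(G)$, which follows from $\BB$ being a basis of the quotient. Combining both inclusions gives $(G)=\sqrt[\oR]{(F)}$, and since $\BB$ is a basis of $\PK/(G)$ it is a basis of $\PK/\sqrt[\oR]{(F)}$, as required.
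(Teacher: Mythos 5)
Your proof is correct and follows essentially the same route as the paper's: termination gives (via the border-basis criterion) the direct-sum decomposition $\PK_{2t}=\Span{\BB}_{2t}\oplus\SpanD{G}{2t}$ for all $t$; the fixed point of \textsc{GenericKernel} together with Lemma~\ref{lemsumdeg} yields $\Kc_{G,t,\succeq}\subseteq\SpanD{G}{t}$; Theorem~\ref{thmstablerad} provides the degree $s_0$ at which the moment-matrix kernels generate $\sqrt[\oR]{(F)}$; and Lemma~\ref{lemsubsetker} (using $\SpanD{F}{2t}\subseteq\SpanD{G}{2t}$) transfers this to the $G$-truncated kernel, closing the inclusion chain. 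The one place you go further than the paper's proof of this lemma is the inclusion $(G)\subseteq\sqrt[\oR]{(F)}$, which you establish by an explicit induction over the main loop using Theorem~\ref{theogenericpositive}; in the paper this is folded into the final chain $\SpanD{G}{t}\subset\sqrt[\oR]{I}$ and justified only in the subsequent proposition (``by construction $G_{s_0}\subset\sqrt[\oR]{I}$''). Making it explicit is a clean improvement rather than a different method.
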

\begin{proof}
If the algorithm stops, all boundary polynomials of $C^{+}(G)$ reduce to $0$ by
$G$. By Theorem \ref{thmnfdegd}, for all $t$ we have $\PK_{2\,t}= \Span{ \BB }_{2\,t} \oplus \SpanD{G}{2\,t}$. As 
$\Kc_{\Gred,\BB_{t},\succeq}=\{0\}$ by Lemma \ref{lemsumdeg}, we deduce that 
$$ 
\Kc_{G,t,\succeq} \subset \SpanD{G}{t}.
$$
By Theorem~\ref{thmstablerad}, there exists $s_{0}$ \note{But how do we know that we have gone far enough? We may terminate with a Border basis of some $J\subset\sqrt[\oR]{I}$.} such that 
\[ 
(\Kc_{F,s_{0},\succeq}) = \sqrt[\oR]{I},
\]
where $I=(F)$.
By lemma \ref{lemsubsetker}, for $t\ge  s_{0}$, 
$$ 
\Kc_{F,s_{0},\succeq}\subset \Kc_{G,t,\succeq} \subset
\SpanD{G}{t} \subset \sqrt[\oR]{I},
$$
which implies that $(G)= \sqrt[\oR]{I}$.
\end{proof}

\begin{proposition}
Assume that $V_{\oR} (F)$ is finite. Then the algorithm \ref{algo:RBB}
terminates. It outputs a border basis $G$ for $\BB$
connected to $1$, such that $\PR=\Span{\BB} \oplus (G)$ and $(G)=\sqrt[\oR]{I}$.
\end{proposition}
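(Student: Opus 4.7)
The correctness part, \emph{assuming} termination, follows directly from Lemma~\ref{lemfixpt}. So the real task is to prove that the algorithm halts when $V_\oR(F)$ is finite. Set $I := (F)$; by the Real Nullstellensatz, $|V_\oR(F)|<\infty$ implies that $\sqrt[\oR]{I}$ is zero-dimensional and, writing $r := |V_\oR(F)|$, we have $\dim \PR/\sqrt[\oR]{I}=r$.

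My plan is first to invoke Theorem~\ref{thmstablerad} to fix an integer $t_0$ such that $(\Kc_{F,t_0,\succeq})=\sqrt[\oR]{I}$. I would then show, using Lemmas~\ref{lemkerequiv} and~\ref{lemsumdeg}, that whenever the algorithm reaches an iteration with degree parameter $t\ge t_0$ and a maximal $S\subseteq \BB_{t+1}$ whose product $S\cdot S$ reduces by $G$ into $\Span{\BB_{t+1}}$, the kernel $K$ returned by \textsc{GenericKernel}$_\oR(G,\BB,S)$, combined with the current rewriting family $G$, generates an ideal containing $\sqrt[\oR]{I}$. The key input here is that, by construction, $\Fred\subset \SpanD{G}{t}$ and hence $\Lc_{\Fred,S,\succeq}$ is essentially the restriction of $\Lc_{G,t,\succeq}$ to the relevant monomial block; Theorem~\ref{theogenericpositive} then gives $K \subseteq \sqrt[\oR]{(G)+\Fred}\subseteq \sqrt[\oR]{I}$, so each updated $\tilde G$ continues to lie in $\sqrt[\oR]{I}$.

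Next I would bound the candidate basis. At every iteration the monomials of $\BB$ are, by step (4), linearly independent modulo $\Span{\tilde G}$; once $t\ge t_0$ we have $\Span{\tilde G}\subseteq \sqrt[\oR]{I}$, so these classes are independent in $\PR/\sqrt[\oR]{I}$, giving $|\BB|\le r$. Because $\BB$ is connected to $1$, there are only finitely many such monomial sets, so the sequence of bases visited eventually becomes constant. Once $\BB$ is stable and $t$ exceeds $\delta$ and $\max\{\deg b : b\in \BB\}$, the auto-reduced rewriting family $G''$ produced at step (5) is uniquely determined by $\BB$ and the current $\sqrt[\oR]{I}$, hence also stabilizes; at that point the exit test $G''=G$, $\tilde\BB=\BB$, $t''\ge\delta$ at step (7) is triggered, so the algorithm terminates. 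The output then satisfies $(G)=\sqrt[\oR]{I}$ and $\PR=\Span{\BB}\oplus (G)$ by Lemma~\ref{lemfixpt}.

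The main obstacle I expect is the first step: bridging the \emph{truncated}, localized positive moment matrix used in \textsc{GenericKernel} (indexed only by $S\cdot S$) with the full graded relaxation $\Lc_{F,t,\succeq}$ of \cite{LLR07} whose asymptotic kernel generates $\sqrt[\oR]{I}$. The delicate point is that $S$ is only maximal subject to $S\cdot S$ reducing by $G$ onto $\BB_{t+1}$, so the identification ``generic on $\Lc_{\Fred,S,\succeq}$ = generic on $\Lc_{G,t,\succeq}$ mod $\SpanD{G}{t}$'' has to be argued carefully, via flat-extension reasoning in the spirit of Theorem~\ref{theoflatextension} applied to the restriction/extension of $\Lambda$ across $\Span{\BB_t}$ and $\PR_t$. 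Once this equivalence is established, the rest of the termination argument is bookkeeping about degrees and the finite bound $r$ on $|\BB|$.
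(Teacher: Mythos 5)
Your overall architecture is reasonable (split into ``correctness assuming termination'' from Lemma~\ref{lemfixpt}, plus a termination argument anchored in Theorem~\ref{thmstablerad}), and the first part of the argument — that every $K$ produced by \textsc{GenericKernel} lies in $\sqrt[\oR]{I}$, so the accumulating $\tilde G$ stays inside $\sqrt[\oR]{I}$ — is correct and matches the paper. But two steps in the termination argument have genuine gaps.

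First, you need $(G,K)\supseteq\sqrt[\oR]{I}$ to conclude anything about the size of $\BB$, and your proof sketch only establishes the opposite containment $(G,K)\subseteq\sqrt[\oR]{I}$. It is not true that a single call to \textsc{GenericKernel} at some degree $t\ge t_0$ hands you generators of the full real radical: the kernel $K$ lives inside the small space $\Span{S}$, not inside $\PR_t$. The paper bridges this gap by a fixed-point argument that you do not reproduce. It argues by contradiction: \emph{if} the loop ran forever, then for each fixed degree $s$ the family $G_s$ must eventually stabilize; once stabilized, \textsc{GenericKernel} returns $\{0\}$, i.e.\ $\Kc_{\Gred,\BB_{s/2},\succeq}=\{0\}$, which through Lemmas~\ref{lemkerequiv}--\ref{lemsumdeg} and the stabilized direct-sum decomposition $\PR_s=\Span{\BB}_s\oplus\SpanD{G_s}{s}$ forces $\Kc_{G_s,s/2,\succeq}\subset\SpanD{G_s}{s}$, and hence $(G_{s_0})=\sqrt[\oR]{I}$. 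You cannot get this conclusion ``online'' as your sketch suggests; the stabilization is the essential mechanism.

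Second, your bound $|\BB|\le r$ runs the containment backwards. You argue that $\BB$ is independent modulo $\Span{\tilde G}$, that $\Span{\tilde G}\subseteq\sqrt[\oR]{I}$, and then conclude that $\BB$ is independent modulo $\sqrt[\oR]{I}$. This inference fails: enlarging the subspace you quotient by can only introduce new dependencies, so independence modulo a \emph{smaller} space does not give independence modulo the \emph{larger} ideal $\sqrt[\oR]{I}$. Concretely, if there were $p\in\Span{\BB}\cap\sqrt[\oR]{I}$ with $p\notin\Span{\tilde G}$, your premise would hold but the conclusion would fail; ruling this out requires precisely the $(G)=\sqrt[\oR]{I}$ equality, together with the border-basis property $(G)_{d_0+1}=\SpanD{G}{d_0+1}$ from Theorem~\ref{CorDegIdeal}. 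That is how the paper bounds $\BB$: it picks a basis $\BB_0$ of $\PR/\sqrt[\oR]{I}$ of minimal top degree $d_0$ and shows every monomial of degree $d_0+1$ is reducible by $G$, so $\BB\subset\PR_{d_0}$ is finite — contradicting the assumed nontermination. You should adopt this contradiction structure and the $(G)=\sqrt[\oR]{I}$ fixed-point lemma rather than the dimension-count shortcut.
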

\begin{proof} First, we are going to prove by contradiction that when the number of real roots
is finite, the algorithm terminates.

Suppose that the loop goes for ever.  Notice that at each step either 
$G$ is extended by adding new linearly independent polynomials or
it moves to degree $t+1$. Since the number of 
linearly independent polynomials added to $G$ in degree $\le t$ is finite, there is a step
in the loop from which $G$ is not modified any more. In this
case, all boundary $C$-polynomials of elements of $G$ of degree $\le t$ 
are reduced to $0$ by $G_{t}$.  By Theorem \ref{thmnfdegd}, we have
\begin{equation}\label{eqdirectsum}
\PR_{t} = \Span{ \BB }_{t} \oplus \SpanD{G_{t}}{t}.
\end{equation}
We have assumed that the loop goes for ever, thus this property is true for any
degree  $t$. 
By Theorem~\ref{thmstablerad}, there exists $s_{0}$ such that 
\[ 
(\Kc_{F,s_{0}/2,\succeq}) = \sqrt[\oR]{I}.
\]
As any element of $\SpanD{F}{s_{0}}$ reduces to $0$ by the rewriting family
$G_{s_{0}}$, we have $\SpanD{F}{s_{0}}\subset \SpanD{G_{s_{0}}}{s_{0}}$.
By Lemma \ref{lemsubsetker}, we deduce that
\begin{eqnarray*}
 \Kc_{  F, s_{0}/2,\succeq} 
& \subset & \Kc_{ G_{s_{0}}, s_{0}/2 ,\succeq} .
\end{eqnarray*}
For a high enough number of loops, the set $G_{s_{0}}$ is not modified and we
have $\Kc_{G_{s_{0}}, \BB_{s_{0}/2},\succeq}=\{0\}$. Applying Lemma~\ref{lemsumdeg}
using Equation~\eqref{eqdirectsum}, we have
\[
\Kc_{G_{s_{0}}, s_{0}/2,\succeq} \subset \SpanD{G_{s_{0}}}{s_{0}}
\]
By construction $G_{s_{0}} \subset \sqrt[\oR]{I}$, thus
\[ 
(G_{s_{0}}) = \sqrt[\oR]{I}.
\]

Let $\BB_{0}\subset\PR$ which defines a basis in $\PR/\sqrt[\oR]{I}$ and of
smallest possible degree and let $d_{0}$ be the maximum degree of its
elements. Then any monomial $m$ of degree $d_{0}+1$ is equal modulo
$(\sqrt[\oR]{I})_{d_{0}+1}$ to an element $b$ in $\Span{\BB_{0}}$ of degree $\le
d_{0}$. 

By Theorem~\ref{CorDegIdeal},
\[
\SpanD{G_{d_{0}+1}}{d_{0}+1}  = (\sqrt[\oR]{I})_{d_{0}+1},
\]
thus $m-b \in \SpanD{G_{d_{0}+1}}{d_{0}+1}$ so that any monomial of degree
$d_{0}+1$ can be reduced by $G$ to a polynomial in $\PK_{d_{0}}$. Thus 
$\BB= \cap_{f\in G} (\gamma(f))^{c} \subset \PR_{d_{0}}$ is finite
and the algorithm terminates.

By Lemma \ref{lemfixpt}, the algorithm
outputs  a border basis $G$ with respect to $\BB$ 
connected to $1$,  such that $(G)=\sqrt[\oR]{I}$.
\end{proof}
 

\subsection{Correctness for the radical computation}
In this section, we show the correctness of the algorithm for radical
computation, that is with $\oK=\oC$.
\begin{proposition}
Assume that $V_{\oC} (F)$ is finite. Then the algorithm \ref{algo:RBB}
terminates and outputs a border basis $G$ for $\BB$
connected to $1$, such that $(G)=\sqrt{I}$ and
$\PC=\Span{\BB} \oplus \sqrt{I}$.
\end{proposition}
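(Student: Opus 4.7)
The plan is to follow the structure of the real-radical proof, replacing the positivity-based ingredients by their complex analogues. First I would establish termination of the main loop by contradiction. Assuming the loop never exits, at each iteration either $G$ strictly grows in some fixed degree---which can happen only finitely often---or the current degree $t$ increases. By Theorem \ref{theogeneric}, every polynomial produced by \textsc{GenericKernel}$_\oC$ lies in $\sqrt{(G)}$, so by induction $(G) \subseteq \sqrt{(F)}$ throughout the run. When $G_t$ has stabilized, Theorem \ref{thmnfdegd} yields the direct sum decomposition $\PC_t = \Span{\BB}_t \oplus \SpanD{G_t}{t}$.

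Since $V_\oC(F)$ is finite, Theorem \ref{theodim} gives $\dim \PC/\sqrt{(F)} < \infty$; let $d_0$ be the smallest degree admitting a monomial basis of $\PC/\sqrt{(F)}$. I would then adapt Lemmas \ref{lemkerequiv} and \ref{lemsumdeg} to the unsigned (non-positive) setting, using Theorem \ref{theogeneric} in place of Theorem \ref{theogenericpositive} to show that, for a generic $\Lambda \in \SpanD{G}{2t}^{\bot}$, the kernel $\Ker H_\Lambda^{\PC_t}$ is contained in $\SpanD{G_t}{t}$ modulo $\SpanD{G_t}{t}$. Combined with the fact that the evaluations $\unb_\zeta$ for $\zeta \in V_\oC(F)$ lie in $\Lc_{\Gred,\BB}$, this forces $\SpanD{G}{d_0+1} \supseteq (\sqrt{(F)})_{d_0+1}$ once $t$ is large enough, so every monomial of degree $d_0+1$ reduces by $G$ into $\PC_{d_0}$. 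This bounds $\BB \subset \PC_{d_0}$ and contradicts the assumption that the loop runs forever.

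When the main loop exits, I obtain a border basis $G$ with respect to $\BB$ of a zero-dimensional ideal $J := (G)$ with $I \subseteq J \subseteq \sqrt I$. The generic construction in step $(1')$ guarantees that $\PC/J$ is a Gorenstein algebra: the quadratic form $Q_\Lambda$ on $\Span{\BB}$ obtained from the final generic $\Lambda \in \Lc_{\Gred,\BB}$ is non-degenerate, because any residual kernel vector would have been folded into $G$ at an earlier step. Step~(7) then invokes \textsc{Socle}$(G,\BB,\Lambda)$, which computes the dual basis $\{d_i\}$ of $\BB$ with respect to $Q_\Lambda$, forms the socle element $\Delta = \sum_i b_i d_i$, and extracts $\Ker H_{\Delta \cdot \Lambda}$. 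By Theorem \ref{prop:radideal} this kernel equals $\sqrt J = \sqrt I$, and the subsequent normal form pass produces a border basis $G''$ together with a monomial basis $\tilde \BB$ connected to $1$ satisfying $\PC = \Span{\tilde \BB} \oplus \sqrt I$.

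The hard part will be handling the fact that, unlike in the real case where Theorem \ref{thmstablerad} guarantees the stabilized object is exactly $\sqrt[\oR]I$, the complex version of Theorem \ref{theogeneric} gives only the inclusion $\Ker H_{\Lambda^*}^E \subseteq \sqrt{(G)}$ without equality. The main loop may therefore legitimately exit on a Gorenstein ideal $J$ strictly contained in $\sqrt I$, and the \textsc{Socle} step is essential to bridge the gap. I would need to track carefully that every point $\zeta \in V_\oC(I)$ contributes an evaluation functional $\unb_\zeta$ to the feasibility set at every iteration, so that the rank of a generic $H_\Lambda^\BB$ is at least $|V_\oC(I)|$; this is precisely what makes the trace form associated with $\Delta$ recover the full vanishing ideal $I(V_\oC(I)) = \sqrt I$ upon applying \textsc{Socle}.
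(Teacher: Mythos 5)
Your second stage (Gorenstein structure at exit, the \textsc{Socle} step, Theorem~\ref{prop:radideal} bridging the gap between the stabilized ideal $J$ and $\sqrt I$) matches the paper's argument and is correct, including the key observation that the main loop may legitimately exit with $J \subsetneq \sqrt I$ and that the trace form $\Delta$ is essential.

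Your termination argument, however, has a real gap and also misses the much simpler route the paper takes. You try to mirror the real-radical termination proof: adapt Lemmas~\ref{lemkerequiv} and~\ref{lemsumdeg} without the positivity, and then conclude that $\SpanD{G}{d_0+1}\supseteq (\sqrt{(F)})_{d_0+1}$ for $t$ large. But the step that made this work in the real case was Theorem~\ref{thmstablerad}, which asserts that for $t$ large enough the generic \emph{positive} kernel generates exactly $\sqrt[\oR]{(F)}$. The paper states no complex analogue of this, and Theorem~\ref{theogeneric} only gives the one-sided inclusion $\Ker H^E_{\Lambda^*}\subseteq\sqrt{(G)}$. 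The observation that the evaluation functionals $\unb_\zeta$, $\zeta\in V_\oC(F)$, lie in the feasible set again only yields $\Kc_{\Gred,\BB}\subseteq I(V_\oC(F))$, not the reverse containment you need. So your claim that these facts ``force'' the desired inclusion is unjustified, and with it the bound $\BB\subset\PC_{d_0}$.

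The paper avoids all of this by using the hypothesis $|V_\oC(F)|<\infty$ in a stronger way than you do: it means $(F)$ itself is zero-dimensional, not merely that $\sqrt{(F)}$ has finite-dimensional quotient. Step~(4) of Algorithm~\ref{algo:RBB} sets $\Span{\tilde G}=\Span{G^+,\tilde F,K}\cap\Span{\BB^+}_{t'+1}$, which contains the span $\Span{G^+,\tilde F}\cap\Span{\BB^+}_{t'+1}$ that the plain Border Basis Algorithm~\ref{algo:BB} would compute. So at every iteration the new $G$ contains (at least the ideal generated by) what the classical normal-form algorithm would produce, and the latter is proven in \cite{Mourrain2005} to terminate on zero-dimensional input. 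Termination of Algorithm~\ref{algo:RBB} therefore follows immediately, with no rank or stabilization argument needed. You should replace your contradiction argument by this comparison with the classical algorithm; the rest of your proof then goes through.
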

\begin{proof}
Since the family $G$ contains the polynomials constructed by the normal
form algorithm \cite{Mourrain2005} and as $V_{\oC} (I)$ is zero-dimensional,
the normal form algorithm terminates and so do algorithm \ref{algo:RBB}. When
the loop stops, all boundary polynomials of $C^{+}(G)$ for any degree reduce
to $0$ by $G$ and $\Kc_{\Gred,\BB}=\{0\}$. By Theorem \ref{thmnfanyt}, $G$
is a border basis with respect to $\BB$. Let $\Lambda \in
\Span{\BB\cdot \BB}^{\ast}$ such that $\Kc_{\Gred,\BB}= \Ker
H_{\Lambda}^{\BB}$. By definition of $\Lambda$ and normal form property, if $f
\in \Span{\BB\cdot \BB} \cap (G)$ then $f$ reduces to $0$ by $G$ and
$\Lambda(f)=0$. This shows that we can extend $\Lambda$ to $\tilde {\Lambda}
\in \PC^{\ast}$ by $\tilde{\Lambda}=\Lambda$ on $\Span{\BB}$ and
$\tilde{\Lambda}=0$ on $(G)$.  We deduce that $(G)=\Ker H_{\tilde{\Lambda}}$
and that $\AAA_{\Lambda} =\PC/\Ker H_{\tilde{\Lambda}} = \PC/(G)$ is
Gorenstein. Let $d_{1},\ldots, d_{r}$ be the dual basis of $\BB$ for
$Q_{\Lambda}$ and $\Delta=\sum_{i=1}^{r} b_{i}\, d_{i}$.  By Theorem
\ref{prop:radideal}, $\Ker H_{\Delta\cdot \Lambda}^{\BB}$ computed in the
function \textsc{Socle}, yields a new basis $\BB'$ connected 
  to $1$ and a new border basis $G'$ such that $(G')=\sqrt{I}$.
\end{proof}
 
\section{Examples}\label{sec:7}

This section contains two very simple examples which illustrate the effect
of the SDP solution in one loop of the Real Radical Border Basis algorithm.  
The results in the next example are coming from a \texttt{C++} implementation available in the
package \texttt{newmac} of the project {\sc mathemagix}. It uses a version of
\texttt{lapack} with templated coefficients and \texttt{sdpa}
\footnote{\texttt{http://sdpa.sourceforge.net/}} with
extended precision so that all the computation can be run with extended precision
arithmetic.   
\subsection{A univariate example}
We give here a simple example in one variable to show how the real roots can be
separated from the complex roots, using this algorithm. We consider the 
polynomial $f=x^{4}-x^{3}-x+1$ with a single real root $x=1$ of multiplicity $2$.
In the routine \textsc{GenericKernel} of the algorithm, a $4\times 4$ matrix $H$ is constructed and
the linear constraints deduced from the relations
$x^{4}\equiv x^{3}+x-1$,
$x^{5}\equiv x^{3}+x^{2}-1$,
$x^{6}\equiv 2\,x^{3}-1$ modulo $f$ imposes the following form:
$$ 
H :=\left(
\begin{array}{cccc}
1& a & b & c \\
a & b & c &c+a-1\\
b & c & c+a-1 & c+b-1\\
c & c+a-1 & c+b-1&2\,c -1
\end{array}
\right).
$$
where $a=\Lambda(x),b=\Lambda(x^{2}),c=\Lambda (x^{3})$. 
The SDP solver yields the solution 
$$ 
\left(
\begin{array}{cccc}
1& 1 & 1 & 1 \\
1& 1 & 1 & 1 \\
1& 1 & 1 & 1 \\
1& 1 & 1 & 1 
\end{array}
\right)
$$ 
which kernel is $\Span{x-1, x^{2}-x,
x^{3}-x^{2}}$. Thus the output of the algorithm is $(x-1)$ the real radical of
$(f)$, the basis $\BB=\{1\}$ and the real root $x=1$. 

\subsection{A very simple bivariate example}
Let $f_{1}=x^{2}+y^{2}$ and $F=\{f_{1}\} \subset \oR[x,y]$.  The
algorithm computes the following:
\begin{itemizeminus}
 \item $\BB=\Mon-(y^{2})$
 \item We compute \textsc{GenericKernel}  in degree $1$ by choosing
$S=\{1,x,y\}$ with $S\cdot S\supset \support\, f_{1}$.

The SDP problem to solve reads as follows: find $h
 =[a,b,c,d] \in \oR^4$ such that
\[H=\left(
\begin{array}{ccc}
1 & a & b \\
a & c & d  \\
b & d & - c \\
\end{array}
\right) \succcurlyeq 0\] and has of maximal rank. Here
$a=\Lambda(x),b=\Lambda(y),c=\Lambda(x^{2}), d=\Lambda(x\, y)$. The condition $H \succcurlyeq 0$
implies that
\begin{itemize}
 \item $c=0$,
 \item $a=0, b=0, d=0$.
\end{itemize}
and consequently that $\ker H=\Span{x,y}$. Thus $x, y$ are returned by \textsc{GenericKernel} and added to $F$.
 \item After one iteration  the border basis algorithm stops and we obtain $\BB=\{1\}$
and $\sqrt[\oR]{(x^{2}+y^{2})}=(x,y)$.
\end{itemizeminus}

\subsection{Numerical example}
The tables below compare the size of the SDP problems to solve in our approach and in the method
described in \cite{LLR07}.
The $degree$ indicates the degree in the loop of the Border Basis Real radical algorithm, 
$n.sdp$ is the size of matrices in the corresponding SDP problem and $n.constraints$ the 
number of linear constraints involved, $t$ is the degree of the relaxation problem in \cite{LLR07}
and $n.sdp\ grad.\ rel.$ the size of matrices in the corresponding SDP problem.
$$
\begin{array}{|c|c|c|c|c|}
\hline
\multicolumn{5}{|c|}{Katsura\ 4}\\
\hline
degree & n.sdp & n. constraints & t & n.sdp\ grad.\ rel.\\
\hline
2 & 5 & 5 & 2 & 56\\
4 & 11 & 67 & 2 & 56\\
6 & 16 & 176 & 2 & 56\\
\hline
\end{array}
$$ 
$$
\begin{array}{|c|c|c|c|c|}
\hline
\multicolumn{5}{|c|}{Katsura\ 5}\\
\hline
degree & n.sdp & n.constraints & t & n.sdp\ grad.\ rel.\\
\hline
2 & 6 & 6 & 3& 84\\
4 & 16 & 146 & 3& 84\\
6 & 26 & 479 & 3 & 84\\
\hline
\end{array}
$$ 
$$
\begin{array}{|c|c|c|c|c|}
\hline
\multicolumn{5}{|c|}{bifur}\\
\hline
degree & n.sdp & n. constraints & t & n.sdp\ grad.\ rel.\\
\hline
2 & 4 & 2 & 8 & 165\\
4 & 9 & 32 & 8 & 165\\
6 & 16 & 150& 8 & 165\\
8 & 25 & 446& 8 & 165\\
8 & 16 & 152& 8 & 165\\
8 & 16 & 153& 8 & 165\\
6 & 16 & 158& 8 & 165\\
6 & 16 & 162& 8 & 165\\
4 & 9 & 34& 8 & 165\\
6 & 16 & 168& 8 & 165\\
6 & 16 & 169& 8 & 165\\
4 & 9 & 36& 8 & 165\\
6 & 16 & 177& 8 & 165\\
4 & 4 & 3& 8 & 165\\
4 & 8 & 37& 8 & 165\\
\hline
\end{array}$$

The tables below give the time for computing the real radical with the
solvers \texttt{sdpa}\footnote{\texttt{http://sdpa.sourceforge.net/}} or
\texttt{csdp}\footnote{\texttt{https://projects.coin-or.org/Csdp/}}
integrated into the border basis algorithm available in the package
\texttt{newmac} of {\sc mathemagix}. 
$$
\begin{array}{|c|c|c|c|c|c|c|c|c|c|}
\hline
Example & T_{\oR} & \small Gen.\ Ker. & CSDP & SVD\ Drop & Deg  & Deg_{\mathbb{C}}& N_{\mathbb{R}} & N_{\mathbb{C}} & \mathrm{T}_{\oC}\\
\hline\hline
 \multicolumn{10}{|c|}{Precision\ 90}\\  
 \hline
 kat4 &22.479s & 22.281s & 22.1645 & 1e-12 & 4 & 4 & 12 & 16 & 0.06s\\
 \hline
 kat5 & 146.49s & 146.29s & 145.64s & 1e-12 & 5 & 5 & 16 & 32 & 0.165s\\
 \hline
 cyclo & 10.839s & 10.7646s & 10.6243s & 1e-20 & 5 & 5 & 4 & 16 & 0.03s\\
 \hline
 robot & 41.84s & 41.52s & 41.26s & 1e-19 & 6 & 8 & 4 & 40 & 1.3s\\
 \hline\hline
 \multicolumn{10}{|c|}{Precision\ 120}\\
 \hline
 kat4 &22.557s & 22.28s & 22.16 & 1e-14 & 4 & 4 & 12 & 16 & 0.06s\\
 \hline
 kat5 & 146.59s & 146.39s & 145.1s & 1e-12 & 5 & 5  & 16 & 32 & 0.17s\\
 \hline
 cyclo & 10.839s & 10.7646s & 10.6243s & 1e-20 & 5 & 5 & 4 & 16 & 0.03s\\
 \hline
 robot & 42.884s & 42.5216s & 42.2447s & 1e-19 & 6 & 8& 4 & 40 & 1.4s\\
 \hline
\end{array}
$$ 
{\em A precision of 90 or 120 bits is used during the computation
but unfortunately the SDP solver is very, very, very slow for this
precision. A strange behavior/bug of the parameter used in the relaxation of the
barrier function is observed. The solution of this problem is in progress.
}
$$
\begin{array}{|c|c|c|c|c|c|c|c|c|c|}
\hline
Example & T_{\oR} & \small Gen.\ Ker. & SDPA-GMP & SVD\ Drop & Deg  & Deg_{\mathbb{C}}& N_{\mathbb{R}} & N_{\mathbb{C}} & \mathrm{T}_{\oC}\\
\hline\hline
\multicolumn{10}{|c|}{Precision\ 90}\\
\hline
kat4 &4.18s & 4.12s & 3.26 & 1e-18 & 4 & 4 & 12 & 16 & 0.06\\
\hline
kat5 & 26.28s & 26.01s & 23.16s & 1e-18 & 5 & 5 & 16 & 32 & 0.165\\
\hline\
cyclo & 10,95 & 10.77 & 10.64 & 1e-20 & 5 & 5 & 4 & 16 & 0.03\\
\hline
robot & 19,84 & 19.52 & 19.26 & 1e-19 & 6 & 8 & 4 & 40 & 1.3s\\
\hline
\end{array}
$$

Using {\tt SDPA-gmp} as the solver allows us a great improvement in
efficiency though we expect futher improvements improving both the way
connection with {\tt SDPA-gmp} is operated and better tuning the parameters
{\tt SDPA}.

\end{document}